\def\calB{\mathcal{B}}
\def\bdy{\partial}
\def\C{\mathcal{C}}
\def\calC{\mathcal{C}}
\def\calD{\mathcal{D}}
\def\calH{\mathcal{H}}
\def\calM{\mathcal{M}}
\def\N{\mathbb{N}}
\def\R{\mathbb{R}}
\def\Z{\mathbb{Z}}
\def\cross{\times}
\def\P{\mathbb{P}}
\def\calY{\mathcal{Y}}
\def\calZ{\mathcal{Z}}
\def\PML{\mathcal{PML}}
\def\M{\mathcal{M}}
\def\calF{\mathcal{F}}
\def\bdy{\partial}
\def\from{\colon}
\def\CB{\mathcal{H}}
\def\carried{\prec}
\def\carries{\succ}
\renewcommand{\ge}{\geqslant}
\renewcommand{\le}{\leqslant}
\renewcommand{\setminus}{-}
\newcommand{\norm}[1]{|#1|}
\newcommand{\dg}[1]{d_{\text{Mod}}(#1)}
\newcommand{\gp}[3]{(#2 \cdot #3)_{#1}}
\newtheorem{theorem}{Theorem}[section]
\newtheorem{lemma}[theorem]{Lemma}
\newtheorem{corollary}[theorem]{Corollary}
\newtheorem{proposition}[theorem]{Proposition}
\theoremstyle{definition}
\newtheorem{definition}[theorem]{Definition}
\begin{document}

%\doublespace

\title{The compression body graph has infinite diameter}
\author{Joseph Maher, Saul Schleimer}
\date{\today}

\maketitle

\begin{abstract}
We show that the compression body graph %, which is Gromov hyperbolic,
has infinite diameter. \\

% Furthermore, every subgroup in the Johnson
% filtration of the mapping class group contains elements which act
% loxodromically on the compression body graph.  Our methods give an
% alternate proof of a result of Biringer, Johnson and Minsky: the
% stable lamination of a pseudo-Anosov element is contained in the limit
% set of a compression body if and only if some power of the
% pseudo-Anosov element extends over a non-trivial subcompression body.
% We also extend results of Lubotzky, Maher and Wu, on the distribution
% of Casson invariants of random Heegaard splittings, to a larger class
% of random walks.

Subject code: 37E30, 20F65, 57M50.
\end{abstract}

%%% Keywords: mapping class group, coarse geometry, curve complex, compression body graph

%%% 37E30 Homeomorphisms and diffeomorphisms of planes and surfaces
%%% 20F65 Geometric group theory
%%% 57M50 Geometric structures on low-dimensional manifolds

\tableofcontents

%%%%%%%%%%%%%%%%%%%%%%%%%%%%%%%%%%%%%%%%%%%%%%%%%%%%%%%%%%%%%%%%%%%%%%%%%%%%%%
\section{Introduction}
%%%%%%%%%%%%%%%%%%%%%%%%%%%%%%%%%%%%%%%%%%%%%%%%%%%%%%%%%%%%%%%%%%%%%%%%%%%%%%

The curve complex is a simplicial complex whose vertices are isotopy
classes of simple closed curves, and whose simplices are spanned by
simple closed curves which may be realized disjointly in the surface.
In this paper, we consider a related complex, the \emph{compression
  body graph}, defined by Biringer and Vlamis \cite{bv}*{page~94},
which we shall denote by $\mathcal{H}(S)$.  Vertices of this graph are
isomorphism classes of \emph{marked compression bodies}.  A marked
compression body $(U, f)$ is a non-trivial compression body $U$
together with a choice of homeomorphism $f \colon \partial_+ U \to S$,
up to isotopy, from the upper boundary of the compression body to the
surface $S$.  Two marked compression bodies $(U, f)$ and $(V, g)$ are
\emph{isomomorphic} if they differ by a homeomorphism of the
compression body.  More precisely, there must be a homeomorphism
$h \colon U \to V$ such that $f = g \circ h \vert_{\partial_+ U}$.
Two compression bodies are \emph{adjacent} if one is contained in the
other one.  More precisely, the compression body graph is a poset as
follows: suppose that $(U, f)$ and $(V, g)$ are marked compression
bodies, and $ U' \subseteq U $ is a subcompression body of $U$.  If
there is a homeomorphism $h \colon U' \to V$ such that
$f = g \circ h \vert_{\partial_+ U}$ then $V < U$.  Biringer and
Vlamis \cite{bv}*{Theorem~1.1}, following Ivanov, showed that the
simplicial automorphism group of this graph is equal to the mapping
class group. We show:

\begin{theorem}
\label{Thm:InfDiam}
The compression body graph $\mathcal{H}(S)$ is an infinite diameter
Gromov hyperbolic metric space.
\end{theorem}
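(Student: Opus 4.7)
The plan is to relate the coarse geometry of $\CB(S)$ to that of the curve complex $\calC(S)$, which is Gromov hyperbolic of infinite diameter by Masur--Minsky, via the meridian disk sets of compression bodies. For each non-trivial compression body $V$, write $\mathcal{D}(V) \subset \calC(S)$ for its disk set, i.e., the boundaries of essential meridian disks of $V$.

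First I would build a coarsely Lipschitz projection $\pi \from \CB(S) \to \calC(S)$ by choosing, for each vertex $V$, some meridian curve $\pi(V) \in \mathcal{D}(V)$. To show this is coarsely Lipschitz I would use the fact that when $V_1 \subset V_2$ is an edge of $\CB(S)$, the inclusion $\mathcal{D}(V_1) \subset \mathcal{D}(V_2)$ holds, and moreover $V_1$ is obtained from $V_2$ by attaching a bounded collection of one-handles, so the two disk sets are close. Combined with the Masur--Schleimer theorem that each $\mathcal{D}(V)$ is uniformly quasiconvex in $\calC(S)$, this should yield that adjacent vertices in $\CB(S)$ have $\pi$-images at uniformly bounded $\calC(S)$-distance, with constants depending only on the genus of $S$.

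For infinite diameter, take any pseudo-Anosov $f \in \MCG(S)$ whose stable and unstable laminations do not lie in the limit set of any non-trivial compression body, and fix a basepoint $V_0$. Then $\pi(f^n V_0) = f^n \pi(V_0)$ is an orbit of a loxodromic isometry of $\calC(S)$, hence a quasi-geodesic of infinite diameter. Coarse Lipschitzness of $\pi$ then forces $\{f^n V_0\}$ to have infinite diameter in $\CB(S)$.

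Hyperbolicity is the main obstacle. I would verify Bowditch's guessing geodesics criterion. The candidate paths between $V_0, V_1 \in \CB(S)$ would be obtained by lifting a $\calC(S)$-geodesic from $\pi(V_0)$ to $\pi(V_1)$ vertex-by-vertex to a sequence of compression bodies whose disk sets contain the successive geodesic vertices. The key point to check is that such lifts form uniformly slim triangles; this will require careful application of the Masur--Minsky subsurface projection machinery, and control on the fact that the same meridian can be shared by many compression bodies, so lifts are very non-unique. An alternative route, which may be cleaner, is a Kapovich--Rafi--style pullback argument: show that $\pi$ has coarsely connected fibers with uniformly bounded diameter along $\calC(S)$-geodesics, and conclude that hyperbolicity of $\calC(S)$ pulls back to $\CB(S)$.
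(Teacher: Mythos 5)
The central difficulty here is misidentified. You treat hyperbolicity as ``the main obstacle'' and infinite diameter as a quick consequence of a coarsely Lipschitz projection, but in fact the situation is reversed: hyperbolicity is immediate from the electrification lemma (Bowditch, or Kapovich--Rafi), since $\mathcal{H}(S)$ is quasi-isometric to the curve complex $\mathcal{C}(S)$ electrified along the disc sets, and electrifying a hyperbolic space along uniformly quasiconvex subsets yields a hyperbolic space. Infinite diameter is where the real work lies.

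The infinite diameter argument you propose has a genuine gap: the map $\pi \from \mathcal{H}(S) \to \mathcal{C}(S)$ that chooses one meridian per compression body is \emph{not} coarsely Lipschitz, and cannot be made so. If $V_1 \subset V_2$ are adjacent, then indeed $\mathcal{D}(V_1) \subset \mathcal{D}(V_2)$, but each of these disc sets is an \emph{infinite-diameter} quasiconvex subset of $\mathcal{C}(S)$; nothing forces $\pi(V_1)$ and $\pi(V_2)$ to be close. Said differently, the natural Lipschitz map goes the other way — the electrification map $\mathcal{C}(S) \to \mathcal{C}(S)_{\mathcal{D}} \simeq \mathcal{H}(S)$ is distance non-increasing — and such a collapsing map admits no Lipschitz section. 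The fallback you mention (a Kapovich--Rafi pullback with ``uniformly bounded fibers'') fails for the same reason: the fiber over a curve $c$ is the set of all compression bodies with $c$ as a meridian, which is far from bounded. With the correct direction of map, the hard thing to rule out is precisely that an orbit of a suitably chosen pseudo-Anosov might still collapse to bounded diameter after electrification, because a quasi-axis could stay in a bounded neighborhood of a \emph{single} disc set. Excluding this is the content of the paper's stability theorem (Theorem~\ref{Thm:Stability}): a ray in $\mathcal{C}(S)$ whose image in $\mathcal{C}(S)_{\mathcal{D}}$ is bounded must lie in a bounded neighborhood of one disc set $\mathcal{D}(V)$, hence has ending lamination in the zero set $Z(V)$. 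Combined with the measure-zero statement for $\bigcup_V Z(V)$ (Proposition~\ref{prop:lamination}), this produces a ray with unbounded electrified image. Your sketch does use the measure-zero input to choose $f$, but without the stability theorem there is no bridge from ``$f$-orbit is unbounded in $\mathcal{C}(S)$'' to ``$f$-orbit is unbounded in $\mathcal{H}(S)$.''
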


The compression body graph $\mathcal{H}(S)$ is quasi-isometric to the
metric space obtained by electrifying the curve complex along each
\emph{disc set}: the set of all simple closed curves that bound discs
in a specific compression body.
%
% Recall that given a metric space $(X, d)$, and a collection of subsets
% $\calY = \{ Y_i \}_{i \in I}$, the \emph{electrification}
% $(X_\calY, d_\calY)$ of $X$ with respect to $Y$ is the metric space
% obtained by adding a vertex $y_i$ for each set $Y_i$, and coning off
% $Y_i$ by attaching edges of length $1/2$ from each $y \in Y_i$ to
% $y_i$; the image of each set $Y_i$ in $X_\calY$ has diameter one.
%
We will write $\pi_\calY$ for the inclusion map
$X \hookrightarrow X_\calY$, which we shall also refer to as the
\emph{projection map}.  As the electrification of a Gromov hyperbolic
space along uniformly quasi-convex subsets is Gromov hyperbolic the
compression body graph is Gromov hyperbolic.  This follows from work
of Bowditch \cite{bowditch-rel-hyp}, Kapovich and Rafi
\cite{kapovich-rafi} and Masur and Minsky \cite{mm1, mm3}.  See
Section \ref{section:outline} for further details.

% \begin{theorem} \label{theorem:quotient} Let $X$ be a Gromov
% hyperbolic space and let $\calY = \{ Y_i\}_{i \in I}$ be a collection
% of uniformly quasi-convex subsets of $X$.  Then $X_\calY$, the
% electrification of $X$ with respect to $\calY$, is Gromov hyperbolic.

% Furthermore, for any constants $K$ and $c$, there are constants $K'$
% and $c'$ such that a $(K, c)$-quasi-geodesic in $X$ projects to a
% reparameterized $(K', c')$-quasi-geodesic in $X_\calY$.
% \end{theorem}

% The first statement is due to
% Bowditch
% \cite{bowditch-rel-hyp}*{Proposition 7.12}, and both statements follow
% immediately from Kapovich and Rafi \cite{kapovich-rafi}*{Corollary
%   2.4}.  Masur and Minsky showed that the curve complex is Gromov
% hyperbolic \cite{mm1}*{Theorem 1.1}, and that the disc sets are
% quasi-convex \cite{mm3}*{Theorem~1.1}.  We shall write $\C(S)$ for the
% curve graph of the surface $S$, and $\calD$ for the collection of all
% discs sets in $\C(S)$. Therefore $\C_\calD(S)$ denotes the curve
% complex electrified along disc sets, which is quasi-isometric to the
% compression body graph $\calH(S)$.  These two quasi-isometric spaces
% are Gromov hyperbolic with infinite diameter by Theorem
% \ref{Thm:InfDiam}.  Furthermore, work of Dowdall and Taylor
% \cite{dow-tay}*{Theorem 3.2} shows that the Gromov boundary of
% $\C_\calD(S)$ is homeomorphic to the subset of $\partial \C(S)$
% consisting of the complement of the limit sets of disc sets.

We also study the action of the mapping class group on $\calH(S)$.  In
Lemma \ref{lemma:coarse} we show that that for every genus $g \ge 2$,
there are elements of the mapping class group of $S$ which act
loxodromically on $\calH(S)$.  Furthermore in Corollary
\ref{corollary:johnson} we show that every subgroup of the Johnson
filtration contains elements which act loxodromically on $\calH(S)$.

In the final section, we use our methods to give an alternate proof of
Theorem \ref{T:bjm}: the stable lamination of a pseudo-Anosov element
is contained in the limit set of a compression body $V$ if and only if
some power of the pseudo-Anosov extends over a non-trivial
subcompression body of $V$.  This was originally shown by Biringer,
Johnson and Minsky \cite{bjm}*{Theorem 1}, using techniques from
hyperbolic three-manifolds.  It has also been shown by Ackermann
\cite{ack}*{Theorem 1}, extending the methods of Casson and Long
\cite{casson-long}.  We also weaken the hypotheses of a result of
Lubotzky, Maher and Wu \cite{lmw}*{Theorem 1}, showing that a random
Heegaard splitting is hyperbolic with probability tending to one
exponentially quickly, for a larger class of random walks than those
considered in \cite{lmw}.

The results of this paper have been used by Agol and
Freedman~\cite{agol-freedman}, Burton and
Purcell~\cite{burton-purcell}*{Theorem 4.12}, Dang and
Purcell~\cite{dang-purcell}*{Theorem 1.2} and Ma and
Wang~\cite{ma-wang}.

Finally, Theorem \ref{Thm:InfDiam} implies that many graphs, formed by
considering compression bodies of restricted topological type, are
also of infinite diameter, see Section \ref{section:restricted} for
further details.

%%%%%%%%%%%%%%%%%%%%%%%%%%%%%%%%%%%%%%%%%%%%%%%%%%%%%%%%%%%%%%%%%%%%%%%%%%%%%%
\subsection*{Acknowledgements} \label{section:Acknowledgments}
%%%%%%%%%%%%%%%%%%%%%%%%%%%%%%%%%%%%%%%%%%%%%%%%%%%%%%%%%%%%%%%%%%%%%%%%%%%%%%

We thank Ian Biringer for enlightening conversations.  The first
author acknowledges support from the Simons Foundation and PSC-CUNY.
The second author acknowledges support from the EPSRC (EP/I028870/1).
The authors further acknowledge the support of the NSF (DMS-1440140)
while the both were in residence at the MSRI in Berkeley, California,
during the Fall 2016 semester.  The authors would like to thank the
referee for helpful comments and suggestions.

%%%%%%%%%%%%%%%%%%%%%%%%%%%%%%%%%%%%%%%%%%%%%%%%%%%%%%%%%%%%%%%%%%%%%%%%%%%%%%
\section{Outline} \label{section:outline}
%%%%%%%%%%%%%%%%%%%%%%%%%%%%%%%%%%%%%%%%%%%%%%%%%%%%%%%%%%%%%%%%%%%%%%%%%%%%%%

We now give a brief outline of the main argument.  The curve complex
is Gromov hyperbolic, and the disc sets are quasi-convex.  Thus, by
electrifying the curve complex along the discs sets, we obtain a
Gromov hyperbolic space $\C_\calD(S)$ which is quasi-isometric to the
compression body graph $\mathcal{H}(S)$.
The electrification of a Gromov hyperbolic
space along uniformly quasi-convex subsets is Gromov hyperbolic:

\begin{theorem} \label{theorem:quotient} Let $X$ be a Gromov
hyperbolic space and let $\calY = \{ Y_i\}_{i \in I}$ be a collection
of uniformly quasi-convex subsets of $X$.  Then $X_\calY$, the
electrification of $X$ with respect to $\calY$, is Gromov hyperbolic.

Furthermore, for any constants $K$ and $c$, there are constants $K'$
and $c'$ such that a $(K, c)$-quasi-geodesic in $X$ projects to a
reparameterized $(K', c')$-quasi-geodesic in $X_\calY$.
\end{theorem}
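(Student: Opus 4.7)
The plan is to deduce both statements from a single lifting construction that associates to each $X_\calY$-geodesic a canonical path in $X$, and then to transfer stability results between the two spaces.

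Given an $X_\calY$-geodesic $\gamma$ from $x$ to $y$, I first decompose $\gamma$ into maximal segments lying in $X$ and maximal subpaths through a cone vertex $y_i$, each such subpath entering $y_i$ from some $z \in Y_i$ and exiting to some $w \in Y_i$. I replace each such subpath by an $X$-geodesic from $z$ to $w$, which by uniform quasi-convexity stays in a bounded neighborhood of $Y_i$. This produces a path $\hat\gamma$ in $X$, the \emph{lift} of $\gamma$. The first key claim is that $\hat\gamma$ is an unparameterized $(K_0, c_0)$-quasigeodesic in $X$, with universal constants depending only on $\delta$ and the quasi-convexity constant of $\calY$.

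To prove Gromov hyperbolicity of $X_\calY$, I lift each side of a geodesic triangle in $X_\calY$ to obtain a triangle in $X$ whose sides are uniform unparameterized quasigeodesics. The Morse lemma in $X$ implies this triangle is $R$-thin in $X$ for some universal $R$. Since $\pi_\calY$ is $1$-Lipschitz and since each replacement $X$-geodesic within a $Y_i$ collapses to a single cone vertex in $X_\calY$, this thinness descends to the original triangle, giving hyperbolicity.

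For the projection statement, let $\alpha$ be a $(K, c)$-quasigeodesic in $X$. The upper bound on the $X_\calY$-length of $\pi_\calY \circ \alpha$ is automatic from $1$-Lipschitzness. For the lower bound, I reparameterize $\alpha$ by collapsing each maximal excursion into a suitable $D$-neighborhood of some $Y_i$ down to an interval of bounded length; hyperbolicity of $X$ ensures these excursions are well-defined and interact cleanly, while quasi-convexity of $\calY$ bounds the $X_\calY$-contribution of each, so the reparameterized projection becomes a $(K', c')$-quasigeodesic in $X_\calY$.

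The main technical obstacle is proving that lifts are uniform unparameterized quasigeodesics in $X$. The subtlety is controlling excursions in $X$: a replacement $X$-geodesic within $Y_i$ could in principle pass close to another quasi-convex set $Y_j$, and if so $\gamma$ itself could have used the vertex $y_j$ to shorten its $X_\calY$-length, contradicting that $\gamma$ is an $X_\calY$-geodesic. A $\delta$-thin triangle argument in $X$ combined with uniform quasi-convexity of $\calY$ quantifies this, controls the relative positions of consecutive $Y_i$ excursions, and yields the required quasigeodesic constants.
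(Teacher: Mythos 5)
The paper does not give a proof of Theorem~\ref{theorem:quotient}: it attributes the first statement to Bowditch \cite{bowditch-rel-hyp}*{Proposition 7.12} and both statements to Kapovich and Rafi \cite{kapovich-rafi}*{Corollary 2.4}. So there is no in-paper argument for your sketch to be compared against; I can only evaluate it on its own terms and against the spirit of those references.

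Your lifting construction is a reasonable route, and you correctly identify the crux: proving that a lift $\hat\gamma$ of an $X_\calY$-geodesic is a uniform unparameterized quasigeodesic in $X$. The thin-triangle-plus-quasiconvexity heuristic you gesture at is the right idea --- if $\hat\gamma$ backtracked significantly, two replacement geodesics lying in neighborhoods of $Y_i$ and $Y_j$ would pass $X$-close to each other, whence (using quasiconvexity and that $\pi_\calY$ is $1$-Lipschitz) the cone vertices $y_i$ and $y_j$ would be close in $X_\calY$, contradicting geodesity of $\gamma$. But this is exactly the nontrivial content of the theorem, and your write-up leaves it as a stated ``main technical obstacle'' rather than a proof; as it stands there is no argument, only the correct identification of where the argument must go.

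There is also a genuine gap in your treatment of the projection statement. You ``reparameterize $\alpha$ by collapsing each maximal excursion into a suitable $D$-neighborhood of some $Y_i$,'' but the collection $\calY$ is completely arbitrary: the sets may overlap, nest, or accumulate, and a single point of $\alpha$ can lie within $D$ of infinitely many distinct $Y_i$. In that generality, ``maximal excursion into a $D$-neighborhood of some $Y_i$'' is not a well-defined decomposition, and the claim that hyperbolicity makes these excursions ``interact cleanly'' is carrying weight it cannot bear. This also undercuts your opening plan of deducing both statements from one lifting construction, since you switch to a different decomposition for the second statement. The cleaner route, consistent with that plan, is to deduce the second statement from the lift lemma itself: for a subsegment $\alpha|_{[s,t]}$, choose an $X_\calY$-geodesic $\gamma$ between its projected endpoints, lift it to an unparameterized quasigeodesic $\hat\gamma$ in $X$, apply the Morse lemma in $X$ to fellow-travel $\alpha|_{[s,t]}$ with $\hat\gamma$, and project by $\pi_\calY$ (observing $\pi_\calY\circ\hat\gamma$ is uniformly close to $\gamma$). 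This bounds the Gromov product $\gp{\pi_\calY\alpha(u)}{\pi_\calY\alpha(s)}{\pi_\calY\alpha(t)}$ in $X_\calY$ uniformly over $u\in[s,t]$, which is the characterization of a reparameterized quasigeodesic you would want (compare Proposition~\ref{prop:qg gromov}); note that $1$-Lipschitzness of $\pi_\calY$ alone does not give this bound, since the Gromov product contains a negative term.
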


The first statement is due to Bowditch
\cite{bowditch-rel-hyp}*{Proposition 7.12}, and both statements follow
immediately from Kapovich and Rafi \cite{kapovich-rafi}*{Corollary
  2.4}.  Masur and Minsky showed that the curve complex is Gromov
hyperbolic \cite{mm1}*{Theorem 1.1}, and that the disc sets are
quasi-convex \cite{mm3}*{Theorem~1.1}.  We shall write $\C(S)$ for the
curve graph of the surface $S$, and $\calD$ for the collection of all
discs sets in $\C(S)$. Therefore $\C_\calD(S)$ denotes the curve
complex electrified along disc sets, which is quasi-isometric to the
compression body graph $\calH(S)$.  These two quasi-isometric spaces
are Gromov hyperbolic with infinite diameter by Theorem
\ref{Thm:InfDiam}.  Furthermore, work of Dowdall and Taylor
\cite{dow-tay}*{Theorem 3.2} shows that the Gromov boundary of
$\C_\calD(S)$ is homeomorphic to the subset of $\partial \C(S)$
consisting of the complement of the limit sets of disc sets.

Any quasi-geodesic ray $\gamma \colon \N \to \C(S)$ projects to a
reparameterized and possibly finite diameter quasi-geodesic ray in the
electrification $\C_\calD(S)$.  In Section \ref{section:stability}, we
prove our stability result for quasi-geodesics in the curve complex
$\C(S)$: if $\pi_\mathcal{H} \circ \gamma$ has finite diameter image
in $\C_\calD(S)$ then there is a constant $k$, and a handlebody $V$,
such that $\gamma$ is contained in a $k$-neighbourhood of the disc set
$\mathcal{D}(V)$ in $\C(S)$. In particular, this means that the ending
lamination corresponding to $\gamma$ lies in the limit set of the disc
set $\mathcal{D}(V)$.  However, for a given handlebody $V$, the limit
set of its disc set $\mathcal{D}(V)$ in the boundary of the curve
complex has measure zero.  As there are only countably many discs
sets, the union of their limit sets also has measure zero.  Therefore,
there is a full measure set of minimal laminations disjoint from the
union of the disc sets.  Any one of these gives rise to a geodesic ray
whose image in the electrification $\C_\calD(S)$ has infinite
diameter.  In particular, $\C_\calD(S)$ has infinite diameter.

We now give a brief sketch of the proof of the stability result.
Suppose there is a geodesic ray $\gamma$ in the curve complex $\C(S)$,
and a sequence of handlebodies $V_i$, such that the initial segment of
$\gamma$ of length $i$ is contained in a $k'$-neighbourhood of
$\mathcal{D}(V_i)$.  In particular, for any $i$, there are infinitely
many disc sets $\mathcal{D}(V_j)$ passing within distance $k'$ of both
$\gamma_0$ and $\gamma_i$.

Recall that given two simple closed curves $a$ and $b$ on a surface
$S$, we may \emph{surger} $a$, along an innermost arc of $b$, to
produce a new curve $a'$, disjoint from $a$, and with smaller
geometric intersection number with $b$.  This is illustrated in Figure
\ref{fig:arc}.  By iterating this procedure, we obtain a surgery
sequence, which gives rise to a reparameterized quasi-geodesic in
$\C(S)$ from $a$ to $b$.  We call this a \emph{curve surgery
  sequence}.  If the simple closed curves $a$ and $b$ bound discs in a
handlebody $V$, then we may surger $a$ along innermost bigons of $b$,
to produce a surgery sequence connecting $a$ and $b$, in which every
surgery curve bounds a disc in $V$.  We shall call such a surgery
sequence a \emph{disc surgery sequence}.

Recall that a \emph{train track} on a surface $S$ is a smoothly
embedded graph, such that the edges at each vertex are all mutually
tangent, and there is at least one edge in each of the two possible
directed tangent directions.  Furthermore, there are no complementary
regions which are nullgons, monogons, bigons or annuli.

A \emph{split} of a train track $\tau$ is a new train track $\tau'$
obtained by one of the local modifications illustrated in Figure
\ref{pic:split}.  We say that a sequence of train tracks
$\{ \tau_j \}$ is a \emph{splitting sequence} if each $\tau_{j+1}$ is
obtained as a split of $\tau_j$.

A key result we use from \cite{mm3}*{page 319} says, roughly speaking,
that we may choose a surgery sequence $\{ D_j \}$ connecting two discs
in a common compression body such that there is a corresponding train
track splitting sequence $\{ \tau_j \}$, so that for each $j$, the
disc $D_j$ is dual to the train track $\tau_j$, and meets it in a
single point at the switch.  We state a precise version of this as
Proposition \ref{prop:splitting seq} below.

Recall that given an essential subsurface $X$ contained in $S$, there
is a \emph{subsurface projection map} from a subset of $\C(S)$ to
$\C(X)$.  Roughly speaking, this map sends a simple closed curve $a$,
meeting $X$ essentially, to a simple closed curve $a' \subset X$,
which is disjoint from some arc of $a \cap X$.  See Section
\ref{section:prelim} for a precise definition.  We may extend the
definition of subsurface projection from simple closed curves to train
tracks, by instead projecting the \emph{vertex cycles} of the train
track $\tau$.  See Section \ref{section:tt} for further details.  The
collection of vertex cycles $\{ \Lambda(\tau_j) \}$ for a splitting
sequence $\{ \tau_j \}$ projects to a reparameterized quasi-geodesic
in $\C(S)$.

We say that three (ordered) points $x$, $y$ and $z$ in a metric space
satisfy the \emph{reverse triangle inequality} with constant $K$ if
$d(x, y) + d(y, z) \le d(x, z) + K$.  By the Morse lemma, given
constants $\delta$, $Q$ and $c$, there is a constant $K$, such that if
$y$ lies on a $(Q, c)$-quasi-geodesic between $x$ and $y$ in a
$\delta$-hyperbolic space, then $x$, $y$ and $z$ satisfy the reverse
triangle inequality.  Given three (ordered) points $x$, $y$ and $z$,
we say that $y$ is \emph{$K$-intermediate} with respect to $x$ and
$z$, if $x$, $y$ and $z$ satisfy the reverse triangle inequality, and
furthermore $d(x, y) \ge K$ and $d(y, z) \ge K$.

Given a train track sequence $\{ \tau_j \}$, we say that a particular
train track $\tau_j$ is \emph{$K$-intermediate} if the vertex cycles
of $\tau_j$ are distance at least $K$ in the curve complex from the
vertex cycles of both $\tau_0$ and $\tau_n$.  If $K$ is sufficiently
large, then for any two train track sequences $\{ \tau_j \}$ and
$\{ \tau'_j \}$ starting near $\gamma_0$ and ending near $\gamma_r$,
for any subsurface $X$ in $S$, and for any pair of $K$-intermediate
train tracks $\tau_j$ and $\tau'_k$, the distance between the
subsurface projections of $\tau_j$ and $\tau'_k$ in $\mathcal{C}(X)$
is bounded in terms of $d_X(\gamma_0, \gamma_r)$.  In particular,
using the Masur-Minsky distance formula we find that every train track
sequence connecting $\gamma_0$ and $\gamma_r$ passes within a bounded
marking distance of any $K$-intermediate train track.  As the marking
complex is locally finite, infinitely many train track sequences must
share a common train track, and this implies that infinitely many of
the handlebodies $V_i$ must share a common disc $D_r$. Furthermore,
the boundary of $D_r$ lies close to the geodesic from $\gamma_0$ to
$\gamma_r$.

We iterate this argument to obtain the following.  Choose an
increasing sequence of numbers $\{ r_n \}$, with the difference
between consecutive numbers fixed and sufficiently large.  At each
stage, we may assume we have passed to an infinite subset of the
handlebodies so that each contains the discs
$D_{r_1}, \ldots D_{r_n}$.  These discs lie in a common compression
body $W_n$, and the geodesic from $\gamma_0$ to $\gamma_{r_n}$ is
contained in a bounded neighbourhood of $\mathcal{D}(W_n)$.  The
compression bodies $W_n$ form an increasing sequence $W_n < W_{n+1}$,
which eventually stabilizes to a constant sequence $W$.  The entire
geodesic ray $\gamma$ is then contained in a bounded neighbourhood of
$\mathcal{D}(W)$, by the stability result.

In the next section, Section \ref{section:prelim}, we review the
results we will use, and set up some notation.  In Section
\ref{section:stability}, we provide the details of the proof of the
stability result.  In Section \ref{section:diameter}, we use the
stability result to prove Theorem \ref{Thm:InfDiam}.  In Section
\ref{section:loxodromics}, we show that there are many loxodromic
isometries of the compression body graph.  In the final section,
Section \ref{section:applications}, we give several applications.

%%%%%%%%%%%%%%%%%%%%%%%%%%%%%%%%%%%%%%%%%%%%%%%%%%%%%%%%%%%%%%%%%%%%%%%%%%%%%%
\section{Preliminaries} \label{section:prelim}
%%%%%%%%%%%%%%%%%%%%%%%%%%%%%%%%%%%%%%%%%%%%%%%%%%%%%%%%%%%%%%%%%%%%%%%%%%%%%%

In this section we review some background material, and set up some
notation.

%%%%%%%%%%%%%%%%%%%%%%%%%%%%%%%%%%%%%%%%%%%%%%%%%%%%%%%%%%%%%%%%%%%%%%%%%%%%%%
\subsection{The mapping class group}
%%%%%%%%%%%%%%%%%%%%%%%%%%%%%%%%%%%%%%%%%%%%%%%%%%%%%%%%%%%%%%%%%%%%%%%%%%%%%%

Let $S$ be a compact connected oriented surface, possibly with
boundary.  The \emph{mapping class group} $\text{Mod}(S)$ is the group
of homeomorphisms of $S$, up to isotopy.  We shall fix a finite
generating set for the mapping class group, and we shall write
$\dg{g,h}$ for the corresponding word metric on $\text{Mod}(S)$.

We say a simple closed curve in $S$ is \emph{peripheral} if it
cobounds an annulus together with one of the boundary components of
$S$.  We say a simple closed curve in $S$ is \emph{essential} if it
does not bound a disc and is not peripheral.

Given two finite collections of essential curves $\mu$ and $\mu'$, we
may extend the geometric intersection number from single curves to
finite collections by
\[ i(\mu, \mu') = \sum_{a \in \mu, \, a' \in \mu'} i(a, a'). \]
We say a set of curves $\mu$ is a \emph{marking} of $S$ if the set of
curves \emph{fills} the surface: that is, for all curves $a \in \C(S)$
we have $i(a, \mu) > 0$.  We say a marking $\mu$ is an
\emph{$L$-marking} if $i(\mu, \mu) \le L$.  We say two markings $\mu$
and $\mu'$ are $L'$-adjacent if $i(\mu, \mu') \le L'$.  Define
$\calM_{L, L'}(S)$ to be the graph whose vertices are (isotopy classes
of) $L$-markings and whose edges connect pairs of $L$-markings which
are $L'$-adjacent.  Masur and Minsky \cite{mm2}*{Section 7.1} show
that, for sufficiently large constants $L$ and $L'$, the \emph{marking
  graph} $\calM(S) = \calM_{L, L'}(S)$ is locally finite, connected
and quasi-isometric to the Cayley graph of the mapping class group
$(\text{Mod}(S), d_\text{Mod})$. We shall fix a pair of constants $L$
and $L'$ with this property for the remainder of this paper, and will
suppress these constants from our notation.

The \emph{complex of curves} $\C(S)$ is a simplicial complex, whose
vertices consist of isotopy classes of essential curves, and whose
simplices are spanned by disjoint (perhaps after isotopy) curves.  We
need to modify this definition for certain low-complexity surfaces, as
we now describe.  In the case of a once-holed torus we connect two
curves by an edge if they have geometric intersection number one.  In
the case of a four-holed sphere we connect two curves by an edge if
they have geometric intersection number two.  Finally, if the surface
$S$ is an annulus $A$, then we define the curve complex $\C(A)$ as
follows: the vertices consist of properly embedded essential arcs up
to homotopies fixing the endpoints, and two arcs are connected by an
edge if they may be realized disjointly in the interior of the
annulus.  We shall consider the complex of curves as a metric space in
which each edge has length one.  We will write $d_S$ for distance in
the complex of curves.  We extend the definition of the metric from
points to finite sets by setting
\[ d_S(A, B) = \min_{a \in A, \, b \in B} d_S(a, b). \]

Suppose that $S$ is not an annulus.  Then a subsurface $Y \subset S$
is \emph{peripheral} if it is an annulus with peripheral boundary.  A
subsurface $Y \subset S$ is \emph{essential} if it is not peripheral
and if all boundary components are essential or peripheral.  Let
$Y \subset S$ be an essential subsurface. We will write $d_Y$ for the
metric in $\C(Y)$. Let $Y_\varnothing$ be the set of vertices of
$\C(S)$ corresponding to essential curves which may be isotoped to be
disjoint from $Y$. There is a coarsely well defined \emph{subsurface
  projection} $\pi_Y \colon \C(S) \setminus Y_\varnothing \to \C(Y)$,
which we now define.  We say a properly embedded arc in a surface $S$
is \emph{essential} if it does not bound a properly embedded bigon
together with a subarc of the boundary of $S$.  Let $\mathcal{AC}(S)$
be the \emph{arc and curve complex} of $S$: the simplicial complex
whose vertices are isotopy classes of essential curves and properly
embedded essential simple arcs.  Thus $\mathcal{AC}(S)$ contains
$\C(S)$ as a subcomplex, and this inclusion is a quasi-isometry.  Let
$S^Y$ be the cover of $S$ corresponding to $Y$.  The surface $Y$ is
homeomorphic to the Gromov closure of $S^Y$ so we may identify
$\mathcal{AC}(Y)$ with $\mathcal{AC}(S^Y)$.  For any essential curve
or arc $a$, let $a^Y$ be the full preimage of $a$ in $S^Y$.  Define
$\pi_Y(a)$ to be the set of essential components of $a^Y$ in $S^Y$.
So $\pi_Y(a)$ is either empty, or is a simplex in $\mathcal{AC}(Y)$.
As $\mathcal{AC}(Y)$ is quasi-isometric to $\C(Y)$, this gives a
coarsely well-defined map to $\C(Y)$ in the latter case.

We define the \emph{cut-off function} $\lfloor \cdot \rfloor_c$ on
$\R$ by $\lfloor x \rfloor_c = x$ if $x \ge c$ and zero otherwise.
Given a set $X$, two functions $f$ and $g$ on $X \times X$, and two
constants $K \ge 1$ and $c > 0$, we say that $f$ and $g$ are
$(K, c)$-coarsely equivalent, denoted $f \approx_{(K, c)} g$, if for
all $x$ and $y$ in $X$ we have
\[ \tfrac{1}{K} f(x, y) - \tfrac{c}{K} \le g(x, y) \le K f(x, y) + c. \]

We now state the \emph{distance estimate} due to Masur and Minsky.

\begin{theorem} \cite{mm2}*{Theorem 6.12} \label{theorem:mm dist} %
For any surface $S$ there is a constant $M_0$, such that for any $M
\ge M_0$, there are constants $K$ and $c$, such that for
any markings $\mu$ and $\nu$,
\[
d_{\M(S)}( \mu, \nu) \approx_{(K, c)} \sum_X \lfloor d_X( \mu, \nu )
\rfloor_M.
\]
\end{theorem}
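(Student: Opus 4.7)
The plan is to establish the quasi-isometry by proving two one-sided inequalities, treating the easy and hard directions separately.

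For the direction $\sum_X \lfloor d_X(\mu,\nu) \rfloor_M \le K d_{\M(S)}(\mu,\nu) + c$, I would argue along any path $\mu = \mu_0, \mu_1, \ldots, \mu_N = \nu$ in the marking graph. Since $(L,L')$-markings have bounded intersection, each elementary move $\mu_i \to \mu_{i+1}$ alters the subsurface projection $\pi_X(\mu_i)$ by a uniformly bounded amount in $\C(X)$, for every essential subsurface $X$. Thus $X$ can only contribute to the cutoff sum if the sequence $\pi_X(\mu_i)$ traverses distance at least $M$, which requires at least $cM$ steps in which $X$ is ``active''. The point is to show that each step can make at most boundedly many subsurfaces active simultaneously, which follows from Behrstock's inequality: if two overlapping subsurfaces both have large projection distance at the same time, they are ``ordered'' in a way that each step activates only $O(1)$ of them. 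Summing over steps gives the bound.

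For the harder direction $d_{\M(S)}(\mu,\nu) \le K \sum_X \lfloor d_X(\mu,\nu)\rfloor_M + c$, I would construct an explicit path of markings from $\mu$ to $\nu$ whose length is controlled by the right-hand side. The tool is the Masur--Minsky hierarchy of tight geodesics. Given $\mu$ and $\nu$, one builds a main tight geodesic in $\C(S)$ joining base curves, and then inductively installs subordinate geodesics in the component subsurfaces of each complementary region along the main geodesic, stopping when further subsurfaces carry bounded projections. The hierarchy can be ``resolved'' into a sequence of $(L,L')$-markings in which consecutive markings differ by an elementary move, and the total length of this resolution is controlled by the sum $\sum_Y |g_Y|$ of lengths of all geodesics $g_Y$ appearing in the hierarchy. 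One then shows that (up to an additive error depending on $M$) this sum agrees with the cutoff sum of subsurface projections: every subsurface $X$ with $d_X(\mu,\nu) \ge M$ must appear as a component domain in the hierarchy, and its associated geodesic has length comparable to $d_X(\mu,\nu)$.

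The main obstacle is the hierarchy machinery itself. Building hierarchies requires a notion of tight geodesics and a proof that every pair of curves is joined by one; resolving a hierarchy into a marking path requires the structural theorems of Masur--Minsky (no-backtracking in subordinate geodesics, the common domain lemma, and the time-order on component domains). Once these structural results are in hand, the length comparison between the hierarchy and the cutoff sum is essentially a bookkeeping argument, but setting up the combinatorial scaffolding is substantial, and for that reason I would ultimately quote the hierarchy package from \cite{mm2} rather than rebuild it.
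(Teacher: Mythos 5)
This is a statement the paper cites directly from Masur--Minsky \cite{mm2}*{Theorem 6.12} without reproducing the proof, so there is no in-paper argument to compare against. Your sketch is a faithful outline of what the cited proof actually involves, and you correctly identify that the substance lies in the hierarchy machinery (tight geodesics, subordinacy, the resolution of a hierarchy into a marking path), which is not something one would rebuild here; quoting it, as the paper does, is the right call.

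Two small remarks on the sketch. For the ``easy'' direction you invoke Behrstock's inequality to bound the number of simultaneously active subsurfaces. That inequality postdates \cite{mm2}; Masur--Minsky obtain the upper bound on the cutoff sum as a consequence of the time-ordering of component domains inside a hierarchy, rather than from a standalone projection inequality. The two routes are morally the same and both are standard now, but if you want to match the cited source you should attribute the bounded-activity step to the hierarchy time-order rather than to Behrstock. Second, the ``hard'' direction is not quite a bookkeeping argument on top of the structural lemmas: the comparison between $\sum_Y |g_Y|$ and the cutoff sum requires the Large Link Lemma (every subsurface with projection distance above the threshold is realized as a component domain with a geodesic of comparable length), which is one of the deeper structural results, not a corollary of no-backtracking and the common domain lemma. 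Flagging it explicitly would make the dependence on \cite{mm2} more precise. With those adjustments, the proposal accurately reflects the content behind the citation.
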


That is, the distance in the marking complex is coarsely equivalent to
the (cut-off) sum of subsurface projections.  Note that there are only
finitely many non-zero terms on the right-hand side.

%%%%%%%%%%%%%%%%%%%%%%%%%%%%%%%%%%%%%%%%%%%%%%%%%%%%%%%%%%%%%%%%%%%%%%%%%%%%%%
\subsection{Compression bodies and discs sets}
%%%%%%%%%%%%%%%%%%%%%%%%%%%%%%%%%%%%%%%%%%%%%%%%%%%%%%%%%%%%%%%%%%%%%%%%%%%%%%

Let $S$ be a closed connected oriented surface of genus $g$.  A
\emph{compression body} $V$ is a compact orientable three-manifold
obtained from $S \cross I$ by attaching two-handles to
$S \cross \{ 0 \}$, and capping off any newly created two-sphere
components with three-balls.  In particular, if $S$ is a two-sphere,
we do not cap off $S \cross \{ 1 \}$.
% We shall refer to $X \cross \{ 1 \}$ as the \emph{top} of the
% compression body, and the other boundary components as the
% \emph{bottom} of the compression body.
The genus $g$ surface $S \cross \{ 1 \}$ is the \emph{upper boundary}
$\partial_+ V$, while the other boundary components make up the
\emph{lower boundary} $\partial_- V$.  The lower boundary need not be
connected. If $\partial_- V$ is empty, then $V$ is a
\emph{handlebody}.  The \emph{trivial compression body} has no
two-handles, and is homeomorphic to $S \cross I$.

A \emph{marked compression body} is a pair $(V, f )$ where $V$ is a
compression body and $f \colon \partial_+ V \to S$ is a homeomorphism.
Two marked compression bodies $(V, f)$ and $(W, g)$ are
\emph{isomorphic} if there is a homeomorphism $h \colon V \to W$ such
that $g \circ (h |_{\partial V}) = f $.  In what follows, we will
suppress the marking $f$ and assume that $\bdy V$ and $S$ are actually
equal.

Biringer and Vlamis define the \emph{compression body graph}
$\mathcal{H}(S)$ to be the graph whose vertices are isomorphism
classes of non-trivial marked compression bodies.  Here $V$ and $W$
are adjacent if either $V < W$ or $W < V$ \cite{bv}*{page 94}.  They
show that the simplicial automorphism group of $\mathcal{H}(S)$ is
equal to the mapping class group $\text{Mod}(S)$ \cite{bv}*{Theorem
  1.1} for genus at least three.  In genus two, the mapping class
group surjects onto the automorphism group, with kernel generated by
the hyperelliptic involution.

We say a disc $D$ in a marked compression body $(V, f)$ is
\emph{essential} if it is properly embedded in $V$.  In this case, its
boundary is an essential curve in the upper boundary $\partial_+ V$.
We say an essential curve in $S$ bounds a disc in $V$ if it is the
image under $f$ of the boundary of an essential disc in $V$.

\begin{definition}
The \emph{disc set} $\mathcal{D}(V)$ of a marked compression body
$(V, f)$ is the collection of all essential curves in $S$ which bound
discs in $V$.
\end{definition}

A subset $Y$ of a geodesic metric space is \emph{$Q$-quasi-convex} if
for any pair of points $x$ and $y$ in $Y$, any geodesic connecting $x$
and $y$ is contained in a $Q$-neighbourhood of $Y$.  Masur and Minsky
\cite{mm3}*{Theorem 1.1} showed that there is a $Q$ such that the disc
set $\mathcal{D}(V)$ is a $Q$-quasi-convex subset of $\C(S)$.  Given a
metric space $(X, d)$, and a collection of subsets
$Y = \{ Y_i \}_{i \in I} $, the \emph{electrification} of $X$ with
respect to $Y$ is the metric space $X_\calY$ obtained by adding a new
vertex $y_i$ for each set $Y_i$, and coning off $Y_i$ by attaching
edges of length $\tfrac{1}{2}$ from each $y \in Y_i$ to $y_i$; the
image of each set $Y_i$ in $X_\calY$ has diameter one.  We shall write
$\mathcal{D}(S)$ to denote the collection of all disc sets of all
non-trivial compression bodies $V$ with boundary $S$.  The compression
body graph $\CB(S)$ is quasi-isometric to the curve complex
electrified along all the disc sets, namely $\C(S)_{\mathcal{D}(S)}$.

We remark that there is another natural space quasi-isometric to
$\C(S)_{\calD(S)}$, known as the \emph{graph of handlebodies}, which
has vertices being classes of marked handlebodies and edges being
distinct pairs $\{ V , W\}$ where $\mathcal{D}(V)$ has non-empty
intersection with $\calD(W)$.

%%%%%%%%%%%%%%%%%%%%%%%%%%%%%%%%%%%%%%%%%%%%%%%%%%%%%%%%%%%%%%%%%%%%%%%%%%%%%%
\subsection{Surgery sequences}
%%%%%%%%%%%%%%%%%%%%%%%%%%%%%%%%%%%%%%%%%%%%%%%%%%%%%%%%%%%%%%%%%%%%%%%%%%%%%%

In this section we recall the definition of surgery sequences for
simple closed curves and for discs. Since we restrict our attention to
\emph{closed} surfaces our discussion is simpler than the more general
case of compact surfaces with boundary.  In subsequent sections we may
abuse notation by referring to these as just surgery sequences, if it
is clear from context whether we mean curves or discs.

\begin{definition}
Let $a$ be an essential curve in $S$, and let $b'$ be a simple arc
whose endpoints lie on $a$, and whose interior is disjoint from $a$.
Furthermore, suppose that $b'$ is essential in $S \setminus a$.  The
endpoints of $b'$ divide $a$ into two arcs with common endpoints, $a'$
and $a''$, say.  A simple closed curve $c$ is said to be produced by
\emph{(arc) surgery of $a$ along $b'$} if $c$ is homotopic to either
of the simple closed curves $a' \cup b'$ or $a'' \cup b'$.
\end{definition}

\begin{definition}
Let $a$ and $b$ be essential simple closed curves in minimal position
with $i(a, b) \ge 2$.  An \emph{innermost arc} of $b$ with respect to
$a$ is a subarc $b' \subset b$ whose endpoints lie on $a$, and whose
interior is disjoint from $a$.  A simple closed curve $c$ is said to
be produced by \emph{(curve) surgery of $a$ along $b$} if $c$ is
produced by surgery of $a$ along $b'$, for some choice of innermost
arc $b'$ in $b$.
\end{definition}

If $c$ is produced by surgery of $a$ along $b$, then the number of
intersections of $c$ with $b$ is strictly less than the number of
intersections of $a$ with $b$.

\begin{definition}
Given a pair of essential simple closed curves $a$ and $b$, a
\emph{(curve) surgery sequence} connecting $a$ and $b$ is a sequence
of simple closed curves $\{ a_i \}_{i = 0}^{n-1}$, such that
$a_0 = a$, the final curve $a_{n-1}$ is disjoint from $b$, and each
$a_{i+1}$ is produced from $a_{i}$ by a surgery of $a_i$ along $b$.
\end{definition}

\begin{definition}
Let $D$ and $E$ be essential discs in a compression body $V$ in
minimal position, and let $E'$ be an outermost bigon of $E$ with
respect to the arcs of $D \cap E$.  The arc of intersection between
$E'$ and $D$ divides $D$ into two discs, $D'$ and $D''$, say.  We say
that a disc $F$ is produced by \emph{(disc) surgery of $D$ along $E'$}
if $F$ is homotopic to either of the discs $D' \cup E'$ or
$D'' \cup E'$, for some choice of outermost bigon $E'$ contained in
$E$.
\end{definition}

The disc $F$ produced by surgery of $D$ along $E'$ is disjoint from
$D$, and is essential.  To see this, suppose that one of the discs $F$
is inessential. That is, $\partial F$ bounds a disc $C$ in $S$.  We
can then ambiently isotope $\partial E' \cap S$ across $C$, reducing
the number of intersections between $\partial D$ and $\partial E$, a
contradiction.

\begin{definition}
Given a pair of essential discs $D$ and $E$ in minimal position, a
\emph{(disc) surgery sequence} connecting $D$ and $E$ is a sequence of
essential properly embedded discs $\{ D_i \}_{i = 0}^{n-1}$, such that
$D_1 = D$, the final disc $D_{n-1}$ is disjoint from $E$, and each
$D_{i+1}$ is produced from $D_{i}$ by a (disc) surgery of $D_i$ along
$E$.
\end{definition}

We remark that if $\{ D_i \}_{i=1}^n$ is a (disc) surgery sequence,
then $\{ \partial D_i \}_{i=1}^n$ is a (curve) surgery sequence.

\begin{proposition}
Let $V$ be a compression body.  Any two curves in its disc set
$\mathcal{D}(V)$ are connected by a disc surgery sequence.
\end{proposition}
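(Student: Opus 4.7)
The plan is to induct on $n = |D \cap E|$, the number of arcs of intersection between the two essential compressing discs $D$ and $E$ for $a$ and $b$, after isotopy to minimal position in $V$. In minimal position, $D \cap E$ is a collection of disjoint properly embedded arcs in $E$ with endpoints on $a \cap b$.

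I would handle the base case $n = 0$ by noting that $D$ is already disjoint from $E$, so the single-term sequence $\{D\}$ is a valid disc surgery sequence.

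For the inductive step $n \ge 1$, I would extract an outermost arc $\alpha$ of $D \cap E$ in the disc $E$, cutting off a sub-disc $E' \subset E$ whose interior is disjoint from $D$. The arc $\alpha$ splits $D$ into two sub-discs $D'$ and $D''$, and I would form a surgery disc $F = D' \cup E'$ (the other choice $D'' \cup E'$ works equally well). The remark preceding the proposition shows that $F$ is essential in $V$, so $\partial F$ lies in $\mathcal{D}(V)$. A small pushoff of $E'$ to one side of $E$ removes $\alpha$ from the intersection without introducing new arcs, and isotoping $F$ back to minimal position can only decrease intersections further, giving
\[ |F \cap E| \;<\; n. \]
Applying the inductive hypothesis to the pair $(F, E)$ yields a disc surgery sequence $\{F = D_1, \ldots, D_m\}$ ending in a disc disjoint from $E$; prepending $D_0 = D$ produces the desired sequence from $a$ to a curve disjoint from $b$.

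The main obstacle is verifying that a single surgery strictly decreases the intersection count. This hinges on $\alpha$ being outermost in $E$: the sub-disc $E'$ is free of intersections with $D$ in its interior, so after the pushoff it contributes nothing new to $F \cap E$, while $\alpha$ itself and every arc of $D'' \cap E$ disappear from $F$. Once this monotonicity is established, the induction terminates in at most $n$ steps, which is the entire content of the proposition.
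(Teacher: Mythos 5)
Your proof is correct and follows essentially the same route as the paper: choose an outermost arc (bigon) of $E$ with respect to $D$, surger $D$ along it to get an essential disc with strictly fewer intersections with $E$, and iterate (or induct) until disjointness is achieved. The paper phrases the termination as "this process terminates after finitely many disc surgeries" rather than as a formal induction, but the content is identical, and you correctly invoke the remark that the surgered discs remain essential.
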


\begin{proof}
Let $D$ and $E$ be two essential discs in a compression body $V$, and
assume they intersect in minimal position.  Choose an outermost bigon
$E'$ of $E$ with respect to the arcs $D \cap E$. We may surger the
disc $D$ along $E'$, which produces two new discs in $V$ disjoint from
$D$.  Call one of these $D_1$.  The disc $D_1$ is disjoint from $D$
and has fewer intersections with $E$.  This process terminates after
finitely many disc surgeries.  This gives a disc surgery sequence
$\{ D_i \}_{i=0}^{n-1}$ connecting $D$ and $E$.
\end{proof}

%%%%%%%%%%%%%%%%%%%%%%%%%%%%%%%%%%%%%%%%%%%%%%%%%%%%%%%%%%%%%%%%%%%%%%%%%%%%%%
\subsection{Train tracks}\label{section:tt}
%%%%%%%%%%%%%%%%%%%%%%%%%%%%%%%%%%%%%%%%%%%%%%%%%%%%%%%%%%%%%%%%%%%%%%%%%%%%%%

We briefly recall some of the results we use about train tracks on
surfaces.  For more details see for example Penner and Harer
\cite{ph}.

A \emph{pre-train track} $\tau$ is a smoothly embedded finite graph in
$S$ such that the edges at each vertex are all mutually tangent, and
there is at least one edge in each of the two possible directed
tangent directions. The vertices are commonly referred to as
\emph{switches} and the edges as \emph{branches}.  We will always
assume that all switches have valence at least three.  A trivalent
switch is illustrated below in Figure \ref{fig:switch}.  If none of
the complementary regions of $\tau$ in $S$ are nullgons, monogons,
bigons or annuli, then we say that $\tau$ is a \emph{train track}.  Up
to the action of the mapping class group, there are only finitely many
train tracks in $S$.

\begin{figure}[H]
\begin{center}
\begin{tikzpicture}[scale=0.3]

\draw [thick] (20,13) node [left] {$a$} -- 
              (22.5, 13) .. controls (23.5,13) and (25,13) .. 
              (28,10) node [right] {$b$};
\draw [thick] (20,13) -- 
              (22.5, 13) .. controls (23.5,13) and (25,13) .. 
              (28,16) node [right] {$c$};

\end{tikzpicture}
\end{center}
\caption{A trivalent switch for a train track.} \label{fig:switch}
\end{figure}
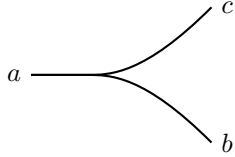

An assignment of non-negative numbers to the branches of $\tau$, known
as \emph{weights}, satisfies the \emph{switch equality} if the sum of
weights in each of the two possible directed tangent directions is
equal: that is $a = b + c$ in Figure \ref{fig:switch} above. A
weighted train track defines a measured lamination on the surface.  We
say that the corresponding lamination is \emph{carried} by the train
track.  A train track $\tau$ determines a polytope of projectively
measured laminations $P(\tau) \subset \PML(S)$ carried by $\tau$. Let
$\Lambda(\tau)$ be the set of vertices of $P(\tau)$. Every
$v \in \Lambda(\tau)$ gives a \emph{vertex cycle}: a simple closed
curve, carried by $\tau$, that puts weight at most two on each branch
of $\tau$. It follows that the set $\Lambda(\tau)$ gives a finite set
of curves in $\C(S)$ and these curves have bounded pairwise geometric
intersection numbers.  A train track $\tau$ is \emph{filling} if
$\Lambda(\tau)$ is a marking.  As there are only finitely many train
tracks up to the action of the mapping class group, these bounds
depend only on the surface $S$.

A simple closed curve $a$ is \emph{dual} to a track $\tau$ if $a$
misses the switches of $\tau$, crosses the branches of $\tau$
transversely, and forms no bigons with $\tau$.  We say that a simple
closed curve $a$ is \emph{switch-dual} to $\tau$ if $a$ meets the
train track transversely at exactly one point of $\tau$, which is a
switch, and forms no bigons with $\tau$.

A track $\tau$ is \emph{large} if all components of $S \setminus \tau$
are discs or peripheral annuli. A track $\tau$ is \emph{recurrent} if
for every branch $b \subset \tau$ there is a curve
$\alpha \in P(\tau)$ putting positive weight on $b$. A track $\tau$ is
\emph{transversely recurrent} if for every branch $b \subset \tau$
there is a curve $\beta$ dual to $\tau$, such that $\beta$ crosses
$b$.

\begin{lemma}
\label{Lem:TracksFinite}
Given a surface $S$, there is a constant $N$, with the following
property.  For any marking $\mu$ of $S$, there are at most $N$
non-isotopic filling train tracks $\tau$ with $\Lambda(\tau) = \mu$.
\end{lemma}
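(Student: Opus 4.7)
The plan is to leverage two standard finiteness results: first, that up to the mapping class group action there are only finitely many train tracks in $S$ (which the paper has just noted), and second, that the stabilizer of a filling multicurve in $\text{Mod}(S)$ is finite.

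First I would fix a set of representatives $\tau_1, \ldots, \tau_k$ for the finitely many $\text{Mod}(S)$-orbits of train tracks in $S$. Any filling train track $\tau$ with $\Lambda(\tau) = \mu$ is then of the form $\tau = \phi \cdot \tau_i$ for some $i$ and some $\phi \in \text{Mod}(S)$. Since vertex cycles are defined intrinsically from the train track, the construction $\Lambda$ is equivariant under the action of $\text{Mod}(S)$, so $\phi(\Lambda(\tau_i)) = \Lambda(\phi \cdot \tau_i) = \mu$. In particular, $\phi$ carries the fixed filling marking $\Lambda(\tau_i)$ onto $\mu$.

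Next, for each $i$, the set of $\phi \in \text{Mod}(S)$ with $\phi(\Lambda(\tau_i)) = \mu$ is either empty or a single coset of the stabilizer of $\Lambda(\tau_i)$ in $\text{Mod}(S)$. Two elements $\phi, \phi'$ of this coset give the same train track exactly when $\phi^{-1}\phi'$ lies in the stabilizer of $\tau_i$, which is itself contained in the stabilizer of $\Lambda(\tau_i)$. Hence the number of train tracks of type $\tau_i$ with $\Lambda(\tau) = \mu$ is bounded by the cardinality of the stabilizer of $\Lambda(\tau_i)$. Summing over the finitely many orbit representatives yields a constant $N$ depending only on $S$; the constants $L, L'$ enter only in specifying the class of markings $\mu$ considered, and do not otherwise appear in the bound.

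The main point requiring external input is the finiteness of the stabilizer of a filling marking. For this I would invoke the standard fact that a filling collection of simple closed curves cuts $S$ into a bounded number of polygons (and possibly peripheral annuli), so any mapping class fixing the collection up to isotopy permutes this finite combinatorial structure and hence lies in a finite group. This is the only nontrivial ingredient; everything else in the argument is a bookkeeping exercise combining equivariance of $\Lambda$ with the orbit–stabilizer principle.
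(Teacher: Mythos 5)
Your proof is correct and takes essentially the same route as the paper: finitely many $\text{Mod}(S)$-orbits of train tracks, equivariance of $\Lambda$, and finiteness of the stabilizer of a filling collection of curves. Your orbit--stabilizer bookkeeping spells out the uniformity of the bound $N$ (which the paper handles more tersely by invoking the bound $i(\mu,\mu)\le L$), but the underlying argument is the same.
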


\begin{proof}
There are only finitely many isotopy classes of filling train tracks
up to the action of the mapping class group.  If $\tau = g \sigma$,
for some element of the mapping class group $g \in \text{Mod}(S)$, and
$\Lambda(\tau) = \Lambda(\sigma) = \mu$, then $g$ preserves the
collection of curves $\mu$.  Note that the collection of curves $\mu$
fills $S$ and has finite total self-intersection number depending only
on $S$.  We deduce that there are only finitely many such mapping
classes $g$.
\end{proof}

A \emph{split} of a train track $\tau$ produces a new train track
$\sigma$ by one of the local modifications illustrated in Figure
\ref{pic:split} below.  Here a subset of $\tau$ diffeomorphic to the
top configuration, is replaced with one of the lower three
configurations.

\begin{figure}[H]
\begin{center}
\begin{tikzpicture}[scale=0.3]

\def\tangent{
\draw[thick] (-2, 1) .. controls (-1.5, 0.25) and (-1, 0) .. (0, 0);
}

\def\segment{
\tangent
\draw[thick] (0, 0) -- (4, 0);
\begin{scope}[xshift=4cm,x=-1cm]
        \tangent
\end{scope}
}

\segment
\begin{scope}[y=-1cm]
        \segment
\end{scope}

\def\split{
\begin{scope}[yshift=-5cm]
        \segment
\end{scope}

\begin{scope}[yshift=-6cm,y=-1cm]
        \segment
\end{scope}
}

\split

\begin{scope}[xshift=-10cm]
        \split
        \draw[thick] (0, -6) .. controls (2, -6) and (2, -5) .. (4, -5);
\end{scope}

\begin{scope}[xshift=10cm]
        \split
        \draw[thick] (0, -5) .. controls (2, -5) and (2, -6) .. (4, -6);
\end{scope}

\end{tikzpicture}%
\end{center} 
\caption{Splitting a train track.} \label{pic:split}
\end{figure}
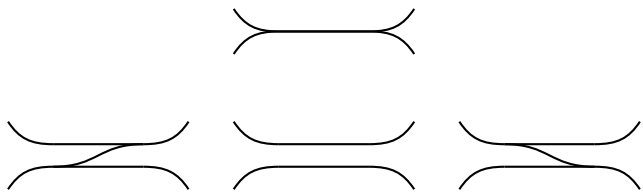

A \emph{shift} for a train track is the local modification given in
Figure \ref{pic:shift}.

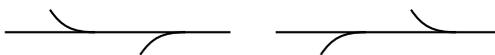
\begin{figure}[H]
\begin{center}
\begin{tikzpicture}[scale=0.3]

\def\tangent{
\draw[thick] (-2, 1) .. controls (-1.5, 0.25) and (-1, 0) .. (0, 0);
}

\def\segment{
\tangent
\draw[thick] (0, 0) -- (4, 0);
\begin{scope}[xshift=4cm,x=-1cm]
        \tangent
\end{scope}
}

\tangent

\draw [thick] (-4, 0) --  (6, 0);

\begin{scope}[y=-1cm, xshift=4cm]
        \tangent
\end{scope}

\begin{scope}[xshift=12cm]

    \begin{scope}[y=-1cm]
        \tangent
    \end{scope}

    \draw [thick] (-4, 0) --  (6, 0);

    \begin{scope}[xshift=4cm]
        \tangent
    \end{scope}

\end{scope}

\end{tikzpicture}%
\end{center} 
\caption{Shifting for a train track.} \label{pic:shift}
\end{figure}

A \emph{tie neighbourhood} $N(\tau)$ for a train track $\tau$ is a
union of rectangles, as follows.  For each switch there is a rectangle
$R(s)$, and for each branch $b$ there is a rectangle $R(b)$. All
rectangles are foliated by vertical arcs called \emph{ties}.  We glue
a vertical side of a branch rectangle to a subset of a vertical side
of a switch rectangle as determined by the combinatorics of $\tau$.
See Figure \ref{fig:tie} for a local picture near a trivalent switch.

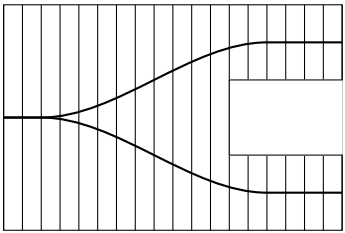
\begin{figure}[H]
\begin{center}
\begin{tikzpicture}[scale=0.5]

\foreach \x in {0,0.5,...,9} { \draw (\x, 0) -- (\x, 6); }; 

\filldraw [white] (6, 2) rectangle (9,4);

\draw (0, 0) -- (9, 0) -- (9, 2) -- (6, 2) -- (6, 4) -- (9, 4) -- (9,
6) -- (0, 6) -- cycle;

\draw [thick] (0, 3) -- (1, 3) .. controls  (3, 3) and (5, 1) .. (7,
1) -- (9, 1);
\draw [thick] (0, 3) -- (1, 3) .. controls  (3, 3) and (5, 5) .. (7,
5) -- (9, 5);
    
\end{tikzpicture}
\end{center}
\caption{A tie neighbourhood for a train track.} \label{fig:tie}
\end{figure}

We say a train track or simple closed curve $\sigma$ is \emph{carried}
by $\tau$ if $\sigma$ may be isotoped to lie in $N(\tau)$, such that
$\sigma$ is transverse to the ties of $N(\tau)$.  We denote this by
either $\sigma \carried \tau$ or $\tau \carries \sigma$.  If $\sigma$
is a train track obtained by splitting and shifting $\tau$ then
$\tau \carries \sigma$. A \emph{train track carrying sequence} is a
sequence of train tracks $\tau_0 \carries \tau_1 \carries \cdots$,
such that each $\tau_{i}$ carries $\tau_{i+1}$.  We may also denote a
train track splitting sequence by $\{ \tau_i \}_{i = 0}^n$, where
$n \in \N_0 \cup \{ \infty \}$.  We say that $\{ \tau_i \}_{i = 0}^n$
is \emph{$K$-connected} if
\[ d_S( \Lambda(\tau_i), \Lambda(\tau_{i+1})) \le K\]
for all $i$.

Masur and Minsky \cite{mm3}*{Theorem 1.3} showed that train track
splitting sequences give rise to reparameterized quasi-geodesics in
the curve complex $\C(S)$.  Using this, Masur, Mosher and Schleimer
show that train track splitting sequences give reparameterized
quasi-geodesics under subsurface projection.

\begin{theorem} \cite{mms}*{Theorem 5.5} \label{theorem:qg} %
For any surface $S$ with $\xi(S) \ge 1$ there is a constant $Q = Q(S)$
with the following property: For any sliding and splitting sequence
$\{ \tau_i \}^n_{i=0}$ of birecurrent train tracks in $S$ and for any
essential subsurface $X \subset S$, if $\pi_X( \tau_i ) \not =
\varnothing$ then the sequence $\{ \pi_X ( \Lambda ( \tau_i ) ) \}^n_{i=0}$
is a $Q$-reparameterized quasi-geodesic in the curve complex $\C(X)$.
\end{theorem}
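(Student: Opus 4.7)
The plan is to reduce to the Masur--Minsky theorem that a sliding and splitting sequence of birecurrent train tracks projects to a reparameterized quasigeodesic in $\C(S)$, and then show that the same control persists after projecting to the subsurface curve complex $\C(X)$. A first, easy ingredient is local: since each slide or split $\tau_i \carries \tau_{i+1}$ is supported in a neighbourhood of a single large branch, the vertex cycles $V(\tau_i)$ and $V(\tau_{i+1})$ have uniformly bounded geometric intersection number, depending only on $S$. Projecting to any essential subsurface $X$ on which both sets have non-empty projection therefore gives $d_X(\pi_X V(\tau_i), \pi_X V(\tau_{i+1})) \le B$ for some uniform $B$. So the projected sequence already has bounded jumps; what remains is to rule out long backtracking.

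The heart of the argument is to build, for each essential subsurface $X$ with $\pi_X(\tau_i) \ne \varnothing$ throughout, an \emph{induced sliding and splitting sequence} on $X$ itself. I would pass to the cover $S^X$ associated to $\pi_1(X)$, compactify, and look at the components of the preimage of each $\tau_i$ which lie in the core identified with $X$. Each slide or split of $\tau_i$ either restricts to a slide or split of the induced track on $X$, or is supported outside the core and leaves $\pi_X V(\tau_i)$ unchanged. After discarding stuttering one obtains a bona fide sliding and splitting sequence $\{ \sigma_j \}$ of birecurrent train tracks in $X$ whose vertex cycles are coarsely equal to $\pi_X V(\tau_{i_j})$. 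Applying Masur--Minsky on $X$ then gives that $\{ V(\sigma_j) \}$ is a reparameterized quasigeodesic in $\C(X)$, and combining with the bounded-jump estimate yields the theorem with a constant $Q$ depending only on $S$.

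The main obstacle is the induced-track construction. One must check that the components of the preimage of $\tau_i$ meeting the core of $S^X$ do form a birecurrent (and eventually filling) track on $X$, and that slides and splits of $\tau_i$ lift coherently to this restriction. Where $\partial X$ cuts through branches of $\tau_i$, one typically has to perform a bounded number of auxiliary slides and splits to clean up the way $\partial X$ sits relative to $\tau_i$, and then verify that this cleanup does not disturb the quasigeodesic constant. Once this setup is in place, the result follows from Masur--Minsky together with the uniform finiteness of train tracks compatible with a given marking (Lemma \ref{Lem:TracksFinite}), which controls the multiplicity with which the induced sequence $\{ \sigma_j \}$ witnesses the original sequence $\{ \tau_i \}$.
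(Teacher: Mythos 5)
The paper does not prove this statement; it cites it verbatim as \cite{mms}*{Theorem 5.5}, a theorem of Masur, Mosher, and Schleimer, and uses it as a black box. So there is no ``paper's own proof'' to compare against directly; the comparison must be with the proof in \cite{mms}.

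Your outline does follow the strategy of \cite{mms}: pass to the cover $S^X$ associated to the subsurface $X$, extract from each $\tau_i$ an \emph{induced track} in $X$, show that each slide or split of $\tau_i$ either descends to a slide or split of the induced track or leaves it essentially unchanged, and then invoke the $\C(S)$-case (due to Masur--Minsky \cite{mm3}) applied to $X$. The bounded-jump observation in your first paragraph is also correct and is part of the setup.

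The problem is that you acknowledge the ``main obstacle'' and then stop. That obstacle is not a technical footnote; it is the theorem. In \cite{mms} the construction and analysis of induced tracks occupies several sections and contains the entire mathematical content: defining the induced track via diagonal extensions of the preimage in $S^X$ and pruning; classifying splits into those that descend to splits of the induced track, those that are ``stutters'' (leaving the induced track unchanged), and finitely many other cases; proving the induced track remains birecurrent; and, crucially, proving that the vertex cycles of the induced track are uniformly close in $\C(X)$ to $\pi_X(V(\tau_i))$. Without this last coherence statement, having a splitting sequence in $X$ does not give any control on $\pi_X(V(\tau_i))$. None of this is ``a bounded number of auxiliary slides and splits to clean up the way $\partial X$ sits relative to $\tau_i$''; it is a genuinely delicate case analysis, and your proposal does not engage with it.

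A smaller point: the appeal to Lemma~\ref{Lem:TracksFinite} at the end is misplaced. That finiteness statement (boundedly many filling tracks with a given vertex-cycle marking) is used in \emph{this} paper for a pigeonhole argument in the proof of Lemma~\ref{lemma:close}; it plays no role in controlling the reparameterization in the proof of Theorem~5.5 of \cite{mms}. There the reparameterization constant comes from the split-by-split case analysis of how each move affects the induced track, not from counting tracks per marking.
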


We abuse notation by writing $d_X(\tau , \cdot)$ for
$d_X(\pi_X(\Lambda(\tau)), \cdot)$, where $X$ is an essential
subsurface of $S$.  Given a train track carrying sequence
$\{ \tau_i \}_{i = 0}^n$, we say that a train track $\tau_i$ in the
sequence is \emph{$K$-intermediate} if $d_S(\tau_0, \tau_i) \ge K$ and
$d_S(\tau_i, \tau_n) \ge K$.  We will use the following consequence of
Theorem \ref{theorem:qg}:

\begin{theorem} \label{Thm:MMS} %
There is a constant $B$, depending only on $S$, such that for any
constant $A$, and any two carrying sequences $\{ \tau_i \}_{i = 0}^n$
and $\{ \sigma_i \}_{i = 0}^{m}$, with $d_S(\tau_0, \sigma_0) \le A$
and $d_S(\tau_n, \sigma_{m}) \le A$, then for any pair of
$(A + B)$-intermediate train tracks $\tau_i$ and $\sigma_j$, and any
subsurface $X \subset S$,
\[ d_X(\tau_i, \sigma_j) \le d_X(\tau_0, \tau_n) + B. \]
\end{theorem}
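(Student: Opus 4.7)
The plan is to push both carrying sequences forward by the subsurface projection $\pi_X$, apply Theorem~\ref{theorem:qg} to obtain reparameterized quasi-geodesics in the hyperbolic space $\C(X)$, and then compare them using the Morse lemma together with a thin-quadrilateral estimate in $\C(X)$.

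By Theorem~\ref{theorem:qg}, whenever the relevant subsurface projections are non-empty, the sequences $\{\pi_X(V(\tau_i))\}$ and $\{\pi_X(V(\sigma_j))\}$ are $Q$-reparameterized quasi-geodesics in $\C(X)$, with $Q$ depending only on $S$. Since $\pi_X$ is coarsely Lipschitz on curves that essentially intersect $X$, the hypotheses $d_S(\tau_0, \sigma_0) \le A$ and $d_S(\tau_n, \sigma_m) \le A$ translate into endpoint bounds $d_X(\tau_0, \sigma_0), \ d_X(\tau_n, \sigma_m) \le 2A + c_0$ for a constant $c_0 = c_0(S)$, provided both sides are defined. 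Applying the Morse lemma in the $\delta$-hyperbolic space $\C(X)$ places both projected sequences within a uniform Hausdorff distance $D = D(Q, \delta)$ of genuine $\C(X)$-geodesics $\alpha_\tau = [\pi_X(\tau_0), \pi_X(\tau_n)]$ and $\alpha_\sigma = [\pi_X(\sigma_0), \pi_X(\sigma_m)]$.

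Next I use the standard thin-quadrilateral estimate in $\delta$-hyperbolic spaces: if two geodesics have pairs of endpoints within $E$ of one another, then any point on one at distance at least $E + 2\delta$ from both of its own endpoints lies within $2\delta$ of the other. Applied to $\alpha_\tau, \alpha_\sigma$ with $E = 2A + c_0$, provided $\pi_X(V(\tau_i))$ and $\pi_X(V(\sigma_j))$ each land in the central portion of their respective $\C(X)$-geodesics, triangulating along $\alpha_\tau$ and combining with the Morse bounds yields
\[
d_X(\tau_i, \sigma_j) \;\le\; d_X(\tau_0, \tau_n) + 2D + 4\delta,
\]
which is of the required additive form with $B := 2D + 4\delta$ depending only on $S$.

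The main obstacle is to choose $B$ so that the $(A + B)$-intermediacy hypothesis in $\C(S)$ delivers both (i) non-emptiness of $\pi_X(V(\tau_i))$ and $\pi_X(V(\sigma_j))$, and (ii) the required centrality in $\C(X)$. For (i), if some $\pi_X(V(\tau_i))$ were empty then the vertex cycles of $\tau_i$ would be disjoint from $X$, placing $\tau_i$ within $\C(S)$-distance $1$ of $\partial X$; since $\pi_X(\tau_0)$ and $\pi_X(\tau_n)$ are non-empty (else $d_X(\tau_0, \tau_n)$ is undefined), the bounded-geodesic-image theorem applied to the $\C(S)$-quasi-geodesic $\{\tau_i\}$ bounds how far from either endpoint such a $\tau_i$ can lie, contradicting the intermediacy hypothesis for $B$ large. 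For (ii), the reparameterized-quasi-geodesic structure of the image sequence in $\C(X)$ allows us to absorb constant stretches of $\pi_X$ into the choice of $B$: enlarging $B$ beyond the Morse, coarse Lipschitz, and BGI constants — all depending only on $S$ — guarantees that $d_S(\tau_0, \tau_i) \ge A + B$ forces $\pi_X(V(\tau_i))$ out of the $(2A + c_0 + 2\delta)$-neighbourhood of the $\C(X)$-endpoints of $\alpha_\tau$, and symmetrically for $\sigma_j$.
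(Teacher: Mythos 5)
Your plan follows the paper's overall architecture: project both sequences by $\pi_X$, invoke Theorem~\ref{theorem:qg} to obtain reparameterised quasi-geodesics in $\C(X)$, then compare them via hyperbolicity. However, the central step where you bound the $\C(X)$-distance between endpoints fails. The claim that $\pi_X$ is coarsely Lipschitz on curves essentially intersecting $X$ is false: take $X$ an annular subsurface with core $\alpha$, a curve $a$ crossing $\alpha$, and $b = T_\alpha^N(a)$. Both $a$ and $b$ essentially intersect $X$ and their $\C(S)$-distance is uniformly bounded (they fill a fixed proper subsurface), yet $d_X(a,b) \approx N$ is unbounded. So $d_S(\tau_0, \sigma_0) \le A$ gives no bound whatsoever on $d_X(\tau_0, \sigma_0)$, and the endpoint estimate $d_X(\tau_0, \sigma_0), d_X(\tau_n, \sigma_m) \le 2A + c_0$ on which your thin-quadrilateral argument rests is simply unavailable. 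The same misconception reappears in your step (ii): large $\C(S)$-distance from $\tau_0$ to $\tau_i$ does \emph{not} force $\pi_X(\tau_i)$ out of any fixed $\C(X)$-neighbourhood of $\pi_X(\tau_0)$; by the bounded geodesic image theorem, $d_X(\tau_0,\tau_i)$ can be uniformly bounded while $d_S(\tau_0,\tau_i)$ is arbitrarily large.

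The correct mechanism, which the paper uses, is the bounded geodesic image theorem applied to the short geodesics $[\tau_0, \sigma_0]$ and $[\tau_n, \sigma_m]$ in $\C(S)$: one first argues (assuming $d_X(\tau_i,\sigma_j) > B$, using the $(A+B)$-intermediacy of $\tau_i, \sigma_j$, hyperbolicity, and the Morse property for the two sequences in $\C(S)$) that $\partial X$ must lie at $\C(S)$-distance greater than $A+2$ from all four endpoints $\tau_0, \tau_n, \sigma_0, \sigma_m$. Since $[\tau_0, \sigma_0]$ and $[\tau_n, \sigma_m]$ have length at most $A$, every vertex on them then intersects $X$, and BGI gives $d_X(\tau_0, \sigma_0), d_X(\tau_n, \sigma_m) \le M(X)$, a bound depending only on $S$. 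Without this, not only is your claimed endpoint bound wrong, but even the $A$-dependent bound you write down (were it true) would contaminate the final constant: the triangulation at the end incurs the endpoint-gap $E$ additively, so your stated conclusion $B = 2D + 4\delta$ does not follow from the inequalities as you have set them up. Replacing the coarse-Lipschitz appeal by the BGI argument, and verifying the ``$\partial X$ is far from all four endpoints'' claim, is the missing content.
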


To prove Theorem \ref{Thm:MMS} we will also need the following
\emph{bounded geodesic image theorem} of Masur and Minsky.

\begin{theorem}\cite[Theorem 3.1]{mm2}\label{theorem:bounded image}
For any surface $S$, and for any essential subsurface $X$ of $S$,
there is a constant $M_X$, such that for any geodesic $\gamma$ in
$\mathcal{C}(S)$, all of whose vertices intersect $X$ non-trivially,
the projected image of $\gamma$ in $\mathcal{C}(X)$ has diameter at
most $M_X$.
\end{theorem}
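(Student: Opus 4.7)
The plan is to prove this by combining hyperbolicity of $\mathcal{C}(S)$ with the structure of subsurface projections. First I would establish a coarse Lipschitz property for $\pi_X$: if $a$ and $b$ are disjoint essential simple closed curves that both intersect $X$ essentially, then $d_X(\pi_X(a), \pi_X(b)) \le 2$. This follows by passing to the cover $S^X$ of $S$ corresponding to $\pi_1(X)$, where disjoint curves downstairs lift to disjoint essential arcs or curves upstairs, so $\pi_X(a)$ and $\pi_X(b)$ are represented by disjoint essential objects in the Gromov closure of $S^X$. Applied to consecutive vertices of the geodesic $\gamma = v_0, v_1, \ldots, v_n$, this shows that the sequence $\pi_X(v_0), \pi_X(v_1), \ldots, \pi_X(v_n)$ is a coarsely Lipschitz path in $\mathcal{C}(X)$.

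Next I would argue by contradiction. Suppose the projection diameter exceeds some large constant $M(X)$. The cleanest route is via the hierarchy machinery: one resolves $\gamma$ into a hierarchy of tight geodesics with $\gamma$ as main geodesic, and the structural theorem asserts that a large projection change to a subsurface $X$ can only occur when $X$ appears as a component domain of some geodesic in the hierarchy. For $X$ to appear as a component domain along $\gamma$, some vertex $v_i$ must be disjoint from $\partial X$ so that $\partial X$ appears in the link of $v_i$. But this contradicts the hypothesis that every vertex of $\gamma$ intersects $X$ essentially.

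An alternative, more self-contained route uses only hyperbolicity of $\mathcal{C}(S)$ together with the coarse Lipschitz property above. If the projection diameter were unbounded, large jumps would have to occur along $\pi_X(\gamma)$; using thin triangles in $\mathcal{C}(S)$ and treating the multicurve $\partial X$ as a coarse ``shadow'' of the set of curves with undefined projection, one shows that $\gamma$ must come within distance $1$ of $\partial X$ in $\mathcal{C}(S)$, producing a vertex disjoint from $\partial X$ and again giving a contradiction.

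The main obstacle in either approach is the annular case, where $\mathcal{C}(X)$ is defined using properly embedded essential arcs up to endpoint-fixing isotopy, and the projection is sensitive to Dehn twisting about the core curve. Here the coarse Lipschitz step requires working directly in the annular cover of $S$, keeping track of how arcs of disjoint curves interleave, and the hierarchy argument must also be set up carefully because the annular case supplies the bulk of the twisting information in the Masur-Minsky distance formula. Once the annular case is handled by direct geometric argument, the contradiction strategy above carries through uniformly.
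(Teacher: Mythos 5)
Your proposal does not close the gap at the heart of the theorem. The coarse Lipschitz step for $\pi_X$ (disjoint curves project within bounded distance, with the usual care in the annular case) is correct and is indeed the standard first observation, but by itself it only shows that $\pi_X(\gamma)$ is a bounded-step path of length $\approx \ell(\gamma)$ — it gives no uniform bound on diameter. Hyperbolicity of $\C(S)$ plus coarse Lipschitzness is genuinely insufficient: the identity map on $\R$ is Lipschitz and $\R$ is hyperbolic, yet geodesics have unbounded image. The whole content of the theorem is the much stronger fact that the projection is coarsely \emph{constant} along any geodesic that stays $\ge 1$ from $\partial X$, and your ``self-contained thin-triangle/shadow'' route simply asserts this conclusion (``one shows that $\gamma$ must come within distance $1$ of $\partial X$'') rather than deriving it. The mechanism that actually produces it is a nesting/surgery structure: in Masur--Minsky's original argument one builds a nested sequence of curve systems (essentially a train-track-like structure) interpolating between the endpoints, and shows that as long as none of the intermediate curves is disjoint from $X$, their lifts to the cover $S^X$ all contain a common essential arc, forcing their projections to cluster; the modern proof of Webb runs the same idea through unicorn arcs. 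Some such surgery input is indispensable and is missing from your outline.

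Your first proposed route, via hierarchies, is circular: Theorem 3.1 in \cite{mm2} is proved in Section~3 and the hierarchy machinery of Section~4 is built on top of it (the ``large link'' lemma and the component-domain structure theorem you invoke both presuppose the bounded geodesic image theorem). So that route cannot be used to prove the statement. Note also that the present paper does not supply a proof at all — it cites the result — so the comparison should be to \cite{mm2} (or Webb's later simplification), and in either case the argument is a direct surgery/nesting argument, not a fellow-traveling argument from hyperbolicity. You correctly flag that the annular case needs separate care, but that is a secondary issue relative to the missing surgery step.
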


\begin{proof}[Proof of Theorem \ref{Thm:MMS}]
If there is a constant $B$, depending on $S$, such that
$d_X(\tau_i, \sigma_j) \le B$, then we are done.  Otherwise, by the
bounded geodesic image theorem, $\partial X$ is also
$(A+B)$-intermediate.  Then $d_X(\tau_0, \sigma_0) \le M_X$ and
$d_X(\tau_n, \sigma_m) \le M_X$.

By Theorem \ref{theorem:qg}, there is a constant $Q$, such that the
images of both $\{ \Lambda(\tau_i) \}_{i=1}^n$ and
$\{ \Lambda(\sigma_i) \}_{i=1}^{m}$ under $\pi_X$ are reparameterized
$Q$-quasi-geodesics, and furthermore, their endpoints are distance at
most $A_2$ apart in $\mathcal{C}(X)$.  By the Morse property for
quasi-geodesics, there is a constant $A_3$, depending only on $M_X, Q$
and $\delta$, and hence only on the surface $S$, such that
$\{ \Lambda(\tau_i) \}_{i=1}^n$ and
$\{ \Lambda(\sigma_i) \})_{i=1}^{m}$ are Hausdorff distance at most
$A_3$ apart, and so the distance between $\pi_X(\tau_i)$ and
$\pi_X(\sigma_j)$ is at most $d_X(\tau_0, \tau_n) + A_3$.  Therefore
the result follows, choosing $B = \max \{ A_1 + O(\delta), A_3\}$,
which only depends on the topology of the surface $S$.
\end{proof}

We will abuse notation and say that an essential disc $E$ is
\emph{carried} by a train track $\tau$ if $\partial E$ is carried by
$\tau$.

We will use the following result of Masur and Minsky \cite{mm3}*{page 309}.

\begin{proposition} \label{prop:splitting seq} %
Let $D$ and $E$ be essential discs contained in a compression body
$V$. Then there is a surgery sequence $\{D_i\}_{i = 0}^{n-1}$ with
$D = D_0$ and $D_{n-1}$ disjoint from $E$, and a carrying sequence
$\{\tau_i\}_{i = 0}^n$, with the following properties.

\begin{enumerate}

\item The train track $\tau_i$ has only one switch for all
$0 \le i \le n$.

\item The boundary of $D_i$ is switch-dual to the track $\tau_i$, for
all $0 \le i < n$.

\item The disc $E$ is carried by the train track $\tau_i$, for all
$0 \le i \le n$.

\end{enumerate}

\end{proposition}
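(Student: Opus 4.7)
The plan is to induct on the geometric intersection number $|D \cap E|$, constructing the surgery sequence $\{D_i\}$ and the carrying sequence $\{\tau_i\}$ in tandem.

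For the base case $|D \cap E| = 0$, I take $n = 0$ and $D_0 = D$; condition (2) is then vacuous, and I need only produce a train track $\tau_0$ with a single switch that carries $\bdy E$. Thickening $\bdy E$ slightly and attaching a small branch at one chosen point of $\bdy E$ yields such a train track with a trivalent switch.

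For the inductive step, assume $|D \cap E| > 0$ and the result for smaller intersection. Choose an arc $\alpha$ of $D \cap E$ that is outermost in $E$, cutting off an outermost bigon $E' \subset E$ with $\bdy E' = \alpha \cup \beta$ and $\beta \subset \bdy E$; let $p, q \in \bdy D \cap \bdy E$ denote the endpoints of $\alpha$ (equivalently, of $\beta$). Surger $D$ along $E'$ to obtain an essential disc $D_1 \subset V$ with $|D_1 \cap E| < |D \cap E|$. Applying the induction hypothesis to $(D_1, E)$ furnishes a surgery sequence $\{D_i\}_{i=1}^{n}$ and a carrying sequence $\{\tau_i\}_{i=1}^{n}$ satisfying (1)--(3) for $i \geq 1$.

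It remains to produce $\tau_0$ carrying $\tau_1$, with a single switch, switch-dual to $\bdy D$, and carrying $E$. The idea is to obtain $\tau_0$ as an ``unsplit'' of $\tau_1$ at its switch $s_1$: glue a small branch into $\tau_1$ in a neighborhood of $s_1$ whose core arc in $S$ tracks a push-off of $\beta$, producing a train track with a single new switch near $p$ that carries $\tau_1$. Condition (1) is then immediate; condition (3) follows by transitivity of $\carries$, since $\tau_1$ already carries $E$; and condition (2) reduces to checking that $\bdy D$ meets this new $\tau_0$ transversely at the switch $p$ and nowhere else, with no bigons.

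I expect the main obstacle to be precisely this last verification. One must confirm that $\bdy D$ does not cross the added branch away from $p$ and that no bigon is created between $\bdy D$ and $\tau_0$. This should follow from minimality of $|D \cap E|$ (ruling out accidental excess intersections) together with the outermost choice of $\alpha$ (so that the interior of $\beta$ contains no further points of $\bdy D$), but a clean local model near $E'$ will be needed to make the verification rigorous. A reasonable alternative is to build $\tau_0$ first, directly from the local combinatorics of $\bdy D \cup \bdy E$ around $E'$, and then to show that a single split at $s_0$ produces a train track to which the induction hypothesis can be applied to $(D_1, E)$.
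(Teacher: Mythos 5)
Your inductive strategy is genuinely different from the paper's, which constructs the entire surgery sequence and the train tracks $\tau_i$ simultaneously from the combinatorics of $\bdy D \cup \bdy E$: the paper introduces a nested sequence of \emph{bicorns} $c_i = a_i \cup b_i$ (with $a_{i+1} \subset a_i \subset \bdy D$), defines $\tau_i$ by collapsing the arc $a_i$ to a point and removing bigons, and then gives an explicit tie-neighbourhood built from vertex, $a$-, $b$-, and face rectangles to verify switch-duality and the carrying relation $\tau_{i+1} \carried \tau_i$. That rectangle decomposition is precisely the ``clean local model near $E'$'' you say would be needed.

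However, as written your argument has a genuine gap, and it is not merely a technicality. First, the induction hypothesis is too weak to support the inductive step: applying it to $(D_1, E)$ yields a train track $\tau_1$ with \emph{some} switch on which $\bdy D_1$ is dual, but it gives no control over \emph{where} that switch sits relative to $\bdy D_0 = \bdy D$. Since $\bdy D_1 = a_1 \cup \beta$ with $\beta \subset \bdy E$, nothing in the IH forces the switch of $\tau_1$ to lie on the arc $a_1 \subset \bdy D$, and if it lies along the $\beta$-portion then ``unsplitting along a push-off of $\beta$'' does not bring the switch to a point that $\bdy D$ crosses. To make your induction work you would need to strengthen the IH to record the location of the switch along a nested family of subarcs of $\bdy D$ --- which is exactly the nested-bicorn bookkeeping the paper carries out. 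Second, the unsplit itself is underspecified: unsplitting at a switch is a non-canonical local move, and verifying that $\bdy D$ meets the resulting $\tau_0$ transversely only at the new switch with no bigons --- the content of the proposition --- is deferred, not proved. Third, the base case is not well-formed: ``attaching a small branch at one chosen point of $\bdy E$'' either produces a free end (not a graph of the required type) or a higher-valence vertex, and one still needs to rule out monogon/bigon complementary regions. Your closing remark --- build $\tau_0$ directly from the local combinatorics near $E'$ --- points in the right direction, but to close the argument you would have to do this uniformly for all the $\tau_i$ at once, which is what the paper's appendix does.
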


Masur and Minsky \cite{mm3} work in a more general setting allowing
for surfaces with boundary components, and state a weaker version of
property 2, that $d_S(D_i, \tau_i) \le 5$. However, in the case of
closed surfaces, the statement we need follows from the proof of
\cite{mm3}*{Lemma 4.1}.  We provide a review of their work in the
appendix for the convenience of the reader.

Finally, we observe:

\begin{proposition} \label{prop:large} %
For all surfaces $S$, there is a constant $K$ such that for any curves
$\partial D$ and $\partial E$ in $\mathcal{D}$, and any carrying
sequence $\{ \tau_i \}_{i=0}^n$ connecting them, any $K$-intermediate
train track in $\{ \tau_i \}_{i=0}^n$ is birecurrent and filling.
\end{proposition}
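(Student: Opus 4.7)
The plan is to show that the $K$-intermediate condition forces $\tau_i$ to be large, after which birecurrence and filling follow from standard train track theory combined with the construction of Proposition \ref{prop:splitting seq}. The largeness step is the only place where the $K$-intermediate hypothesis is used in a non-trivial way.

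For largeness, suppose for contradiction that $\tau_i$ has a complementary region in $S$ which is neither a disc nor a peripheral annulus. Then this region contains an essential simple closed curve $\alpha$ disjoint from $\tau_i$. Since $\tau_j \carried \tau_i$ for every $j \ge i$, each such $\tau_j$ may be isotoped into a tie neighbourhood of $\tau_i$, which in turn may be chosen disjoint from $\alpha$. In particular, $\alpha$ is disjoint from both $\Lambda(\tau_i) \subset \tau_i$ and $\Lambda(\tau_n) \subset \tau_n$, so
\[
d_S(\tau_i, \tau_n) \le d_S(\Lambda(\tau_i), \alpha) + d_S(\alpha, \Lambda(\tau_n)) \le 2.
\]
Choosing $K \ge 3$ contradicts the assumption that $\tau_i$ is $K$-intermediate.

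For birecurrence, I would argue inductively along the sequence: the initial track $\tau_0$ has a single switch, carries the essential disc $E$, and is switch-dual to the essential disc $D$, which by standard arguments from \cite{ph} provides carried weights and dual curves realizing recurrence and transverse recurrence on every branch of $\tau_0$. Each subsequent split of a birecurrent track remains birecurrent, so every $\tau_i$ in the sequence is birecurrent. For filling, observe that a large birecurrent track has its polytope of carried projective measured laminations $P(\tau_i) \subset \PML(S)$ of full dimension, and hence of non-empty interior. Any essential curve $a$ with $i(a, \Lambda(\tau_i)) = 0$ would have zero intersection with every non-negative combination of vertex cycles, and thus with every lamination in $P(\tau_i)$; but any non-empty open subset of $\PML(S)$ contains minimal filling laminations, which intersect every essential curve positively. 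This contradiction shows $\Lambda(\tau_i)$ fills $S$.

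The principal obstacle is the largeness step: the disjointness argument above is the one place where we genuinely exploit the $K$-intermediate hypothesis, and it relies crucially on the observation that being carried by $\tau_i$ places subsequent tracks inside an arbitrarily narrow tie neighbourhood of $\tau_i$, and hence disjoint from any essential curve in a complementary region. Once largeness is established, birecurrence and filling follow by essentially routine train track arguments.
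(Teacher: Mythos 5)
Your overall route differs from the paper's. You aim to first establish that a $K$-intermediate track $\tau_i$ is \emph{large}, and then deduce filling from largeness plus birecurrence. The paper never explicitly shows largeness; instead it observes directly that a $K$-intermediate $\tau_i$ carries a \emph{filling pair} of curves, namely $\partial E$ (carried by every track in the sequence) together with some vertex cycle of $\tau_i$ at curve-complex distance at least $3$ from $\partial E$, which exists precisely because $\tau_i$ is $K$-intermediate. Any curve disjoint from $\Lambda(\tau_i)$ would then be disjoint from that filling pair, a contradiction. Your largeness observation (an essential curve in a non-disc, non-peripheral complementary region of $\tau_i$ would be distance at most $1$ from every subsequent $\Lambda(\tau_j)$, forcing $d_S(\tau_i,\tau_n)\le 2$) is correct and is a pleasant alternative, but it is doing work the paper's argument does not require.

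The genuine gap is in your passage from ``large and birecurrent'' to ``filling''. You assert that a large birecurrent track has a carrying polytope $P(\tau_i)$ of full dimension in $\PML(S)$, and hence of non-empty interior. This is not justified, and for the single-switch tracks produced by Proposition~\ref{prop:splitting seq} it is genuinely in doubt: a track with one switch and $b$ branches has carrying polytope of dimension at most $b-2$, and nothing in the hypotheses forces $b$ to be as large as $6g-5$. You do not need the full-dimensionality claim at all. Once you know $\tau_i$ is recurrent, any essential curve $a$ with $i(a,\Lambda(\tau_i))=0$ can be isotoped to be disjoint from $\tau_i$ (put $a$ in efficient position; $i(a,\cdot)$ is then linear on the weight cone, vanishes on the vertex cycles, hence on every branch by recurrence, so $a$ crosses no branch). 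Largeness then says $a$ lies in a disc or peripheral annulus, so $a$ is inessential. This repairs your argument without any dimension count. Separately, a smaller issue: you prove birecurrence by inducting on splits, but recurrence is not automatically inherited forward along a splitting sequence (carried sets only shrink). The paper argues each $\tau_i$ is recurrent directly, using that $\partial E$ is carried by \emph{every} $\tau_i$ and traverses every branch; you should do the same rather than rely on a splits-preserve-recurrence claim.
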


\begin{proof}
For all $i$, the curve $\partial E$ runs over every edge of $\tau_i$,
so $\tau_i$ is recurrent. The train track $\tau_i$ has only one
switch, and $\partial D_i$ crosses $\tau_i$ exactly once at that
switch, so $\tau_i$ is transversely
recurrent. %, and hence birecurrent.

If $\tau_i$ is not filling, then there is a curve $a$ disjoint from
the set of vertex cycles $\Lambda(\tau_i)$.  Then $a$ is disjoint from
every collection of simple closed curves produced from positive
integer valued sums of the $\Lambda(\tau_i)$, and in particular
$\tau_i$ cannot carry a pair of curves which fill $S$.  However, the
train track $\tau_i$ carries $\partial E$. As long as
$d_S(\Lambda(\tau_i), \partial E) \ge 3$, there is a vertex cycle
distance at least $3$ from $\partial E$, and so $\tau_i$ carries a
pair of curves which fill $S$, and so $\Lambda(\tau_i)$ fills.
Therefore a $K$-intermediate train track $\tau_i$ is filling, for any
$K \ge 3$.
\end{proof}

%%%%%%%%%%%%%%%%%%%%%%%%%%%%%%%%%%%%%%%%%%%%%%%%%%%%%%%%%%%%%%%%%%%%%%%%%%%%%%
\subsection{Laminations}
%%%%%%%%%%%%%%%%%%%%%%%%%%%%%%%%%%%%%%%%%%%%%%%%%%%%%%%%%%%%%%%%%%%%%%%%%%%%%%

If $f$ is a pseudo-Anosov homeomorphism of $S$ which extends over a
compression body $V$, then the stable lamination of $f$ is a limit of
discs of $V$. On the other hand we have the following:

\begin{proposition} \label{prop:lamination}
There is a minimal lamination in $\PML(S)$ which does not lie in the
limit set of a disc set $\mathcal{D}(V)$, for any compression body
$V$.
\end{proposition}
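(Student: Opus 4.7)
The plan is to proceed by a measure-theoretic argument in $\PML(S)$ using the Thurston measure $\mu$. First I would observe that there are only countably many marked non-trivial compression bodies of $S$: for fixed genus there are finitely many topological types, and for a fixed topological type $V$ the set of markings is a coset space of $\MCG(S)$, which is countable.

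Next, for each compression body $V$, I would show that the limit set of the disc set $\calD(V)$ in $\PML(S)$ has $\mu$-measure zero. The natural route is to reduce to the handlebody case: attaching handlebodies along each component of $\partial_- V$ produces a handlebody $H$ with $V \subset H$, and this inclusion gives $\calD(V) \subseteq \calD(H)$, so the limit set of $\calD(V)$ in $\PML(S)$ is contained in that of $\calD(H)$. The measure-zero statement for the limit set of the disc set of a handlebody is a theorem of Kerckhoff, and this is the key analytic input of the argument.

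Combining the two previous steps, the union $\bigcup_V \overline{\calD(V)} \cap \PML(S)$, taken over all marked non-trivial compression bodies, is a countable union of $\mu$-null sets, hence $\mu$-null. By Masur's theorem, the set of uniquely ergodic (and hence minimal, filling) projective measured laminations has full Thurston measure in $\PML(S)$. Intersecting this full-measure set with the complement of the null set above yields a full-measure, hence nonempty, family of minimal laminations, none of which lies in the limit set of any disc set. Any such lamination proves the proposition.

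The main potential obstacle is the second step, the measure-zero assertion for each individual disc set; once Kerckhoff's handlebody result is imported, the rest reduces to a routine countable-union and full-measure pigeonhole.
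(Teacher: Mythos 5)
Your argument is the same measure-theoretic pigeonhole as the paper's: countably many marked compression bodies, each contributing a null subset of $\PML(S)$, and a full-measure family of minimal laminations to draw from.  The differences are in which null set you use and which reference you invoke.  The paper works with the \emph{zero set} $Z(V) = \{\mu \in \PML(S) : i(\mu,\nu)=0 \text{ for some } \nu \in \overline{\calD(V)}\}$, which strictly contains the limit set $\overline{\calD(V)}$, citing Masur for nowhere-density and Gadre for measure zero; you use the limit set itself and cite Kerckhoff's theorem on the measure of the limit set of the handlebody group.  Both inputs are correct, and your version does prove the proposition as literally stated.  However, the stronger zero-set conclusion is what the paper actually invokes later: in the proof of Theorem \ref{Thm:InfDiam} one needs $\lambda \notin Z(V)$, because the contradiction there is produced by exhibiting a $\nu \in \overline{\calD(V)}$ with $i(\lambda,\nu)=0$, not by showing $\lambda$ itself is a limit of discs.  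So if you wanted your proof to carry the same downstream weight, you would need to replace Kerckhoff's measure-zero statement for the limit set by Gadre's measure-zero statement for the zero set (or argue that laminations supported on the same topological lamination as a limit of discs already lie in the zero set).  On the other hand, your explicit reduction from a general compression body $V$ to a handlebody $H \supset V$ via $\calD(V) \subseteq \calD(H)$ fills in a step the paper's proof passes over silently (it says ``Fix a handlebody $V$'' even though the statement quantifies over compression bodies), and using uniquely ergodic laminations rather than merely minimal ones is a legitimate, if slightly stronger, choice of full-measure set.
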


\begin{proof}
Recall that $\PML(S)$ is the space of projectively measured
laminations in $S$. Fix a handlebody $V$. Define the \emph{limit set}
$\overline{\calD}(V)$ of $V$ to be the closure of $\calD(V)$,
considered as a subset of $\PML(S)$.  Following Masur
\cite{masur}*{Theorem 1.2}, we define $Z(V)$, the \emph{zero set} of
$\overline{\calD}(V)$, to be the set of laminations $\mu \in \PML(S)$
where there is some $\nu \in \overline{\calD}(V)$ having geometric
intersection $i(\mu, \nu)$ equal to zero. Note that $Z(V)$ is closed
in $\PML(S)$. Masur \cite{masur}*{Theorem 1.2} shows that $Z(V)$ has
empty interior in $\PML(S)$, and Gadre \cite{gadre}*{Theorem A.1}
shows that it has measure zero.  Therefore, the complement of the
countable union $\cup_V Z(V)$ is full measure in $\PML(S)$. The set of
minimal laminations also has full measure in $\PML(S)$, so there is at
least one minimal lamination disjoint from the union of the limit sets
of the disc sets.
\end{proof}

We remark that this argument also works using harmonic measure instead
of Lebesgue measure by work of Kaimanovich and Masur
\cite{km}*{Theorem 2.2.4} and Maher \cite{maher_heegaard}*{Theorem
  3.1}.

%%%%%%%%%%%%%%%%%%%%%%%%%%%%%%%%%%%%%%%%%%%%%%%%%%%%%%%%%%%%%%%%%%%%%%%%%%%%%%
\section{Stability} \label{section:stability}
%%%%%%%%%%%%%%%%%%%%%%%%%%%%%%%%%%%%%%%%%%%%%%%%%%%%%%%%%%%%%%%%%%%%%%%%%%%%%%

\begin{theorem}
\label{Thm:Ray}
Suppose $\gamma \from \N \to \C(S)$ is a geodesic ray. The diameter of
the image of $\pi_{\mathcal{H}(S)} \circ \gamma$ is finite if and
only if there is a compression body $V$ and a constant $k$ so that the
image of $\gamma$ lies in a $k$-neighborhood of $\calD(V)$.
\end{theorem}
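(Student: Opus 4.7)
The plan is to prove the two directions separately. The easy direction is immediate: if $\gamma \subset N_k(\calD(V))$ for some compression body $V$ with type in $A$, then because $\calD(V)$ becomes a cone point of diameter one in $\C(S)_\calA$, the image of $\gamma$ under the projection map has diameter at most $2k + 1$, which gives bounded diameter in the quasi-isometric graph $\calH(S, A)$.

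For the converse, assume $\pi_{\calH(S,A)} \circ \gamma$ has bounded diameter. Following the sketch in Section \ref{section:outline}, the first step is to extract, for each $r \in \N$, a compression body $V_r$ of type in $A$ whose disc set comes uniformly close to both $\gamma_0$ and $\gamma_r$; by quasi-convexity of $\calD(V_r)$ the whole initial segment $\gamma|_{[0,r]}$ then sits in a uniform neighbourhood of $\calD(V_r)$. Apply Proposition \ref{prop:splitting seq} to each $V_r$ to obtain a disc surgery sequence from a disc near $\gamma_0$ to a disc near $\gamma_r$, together with a corresponding single-switch carrying sequence of train tracks $\{\tau_j^{(r)}\}$ whose discs are switch-dual. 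By Proposition \ref{prop:large} the sufficiently intermediate tracks are birecurrent and filling, so their vertex cycle sets $\Lambda(\tau_j^{(r)})$ give markings of $S$.

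The heart of the argument is a pigeonhole on the marking graph. Comparing two such sequences, Theorem \ref{Thm:MMS} gives that for any sufficiently intermediate $\tau_j^{(r)}$ and $\tau_k^{(s)}$, and any essential subsurface $X$, one has $d_X(\tau_j^{(r)}, \tau_k^{(s)}) \le d_X(\gamma_0, \gamma_r) + B$. Feeding this into the Masur--Minsky distance formula (Theorem \ref{theorem:mm dist}) bounds $d_{\M(S)}(\Lambda(\tau_j^{(r)}), \Lambda(\tau_k^{(s)}))$ by a quantity depending only on $d_{\M(S)}(\mu_0, \mu_r)$, for fixed markings refining $\gamma_0$ and $\gamma_r$. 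Since the marking graph is locally finite, only finitely many markings lie in this ball, and by Lemma \ref{Lem:TracksFinite} each marking is realised by only finitely many filling tracks. Hence, after passing to an infinite subset of the index $r$, the sequences $\{\tau_j^{(r)}\}$ share a common intermediate track $\tau$, and the switch-dual curve to $\tau$ bounds a disc in every one of the corresponding $V_r$.

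To finish, iterate. Fix $r_1 < r_2 < \cdots$ with fixed sufficiently large spacing. Inductively refine to an infinite subfamily of compression bodies sharing disjoint discs $D_{r_1}, \ldots, D_{r_n}$; these disjoint discs determine a common sub-compression body $W_n$, and the nested sequence $W_1 \subseteq W_2 \subseteq \cdots$ stabilises because the number of independent compressing discs in a compression body of genus $g$ is at most $3g - 3$. The stable limit $W$ has $\calD(W)$ passing close to every $\gamma_{r_n}$, and quasi-convexity of $\calD(W)$ together with the uniform spacing of the $r_n$ forces $\gamma$ to lie in a uniform neighbourhood of $\calD(W)$, as required. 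The main obstacle is the pigeonhole step: one must keep the intermediate tracks at comparable sequence positions so that Theorem \ref{Thm:MMS} and the distance formula apply with constants independent of $r$ and $s$, and then use Lemma \ref{Lem:TracksFinite} to promote a coincidence of markings to a coincidence of train tracks whose switch-dual curves define a genuine common disc across the chosen subfamily.
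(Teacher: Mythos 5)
Your proposal follows the same strategy as the paper's proof---reduce to a stability statement, use Proposition~\ref{prop:splitting seq} and Theorem~\ref{Thm:MMS} together with the Masur--Minsky distance formula to pigeonhole on the marking graph, then iterate and stabilize an ascending chain of compression bodies---but there is a genuine gap in the iteration step.

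You assert that the extracted intermediate discs $D_{r_1}, \ldots, D_{r_n}$ are \emph{disjoint} and therefore ``determine a common sub-compression body $W_n$.'' Neither claim holds. The curves $\partial D_{r_i}$ are $K$-intermediate for nonoverlapping portions of $\gamma$ and in general lie at curve-complex distance $\geq 2$ from one another, so they must intersect. More seriously, even granting that infinitely many $V_i$ contain \emph{all} of the curves $\partial D_{r_1}, \ldots, \partial D_{r_n}$ in their disc sets, this does not by itself produce a \emph{single} compression body $W_n$ common to all of those $V_i$: different $V_i$ may compress the intersecting curves in topologically distinct ways. The paper closes this gap with the Casson--Long criterion \cite{casson-long}: two curves simultaneously compress in a compression body if and only if there is a pairing on $\partial D_0 \cap \partial D_1$ that is simultaneously unlinked on both curves. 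Since only finitely many such pairings exist, one can pass to a further infinite subfamily of the $V_i$ realizing the \emph{same} pairing, and only then does one obtain a common compression body $W_1 \subseteq V_i$ containing both discs. Your argument is missing this pairing/finiteness step, which is precisely what converts ``each $V_i$ contains both discs'' into ``the $V_i$ share a common compression body.'' (A minor additional remark: the stabilization of $W_1 \subseteq W_2 \subseteq \cdots$ is cleaner to justify by the decreasing topological complexity of the lower boundary under a strict inclusion of compression bodies, rather than via a $3g-3$ count of ``independent compressing discs.'')
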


The backward direction is immediate. In this section we show the
forward direction.  A standard coarse geometry argument using the
hyperbolicity of the curve complex, plus the quasi-convexity of
$\calD(V)$ inside of $\C(S)$, reduces the forward direction of Theorem
\ref{Thm:Ray} to the following statement, which we call the stability
hypothesis.

\begin{theorem}[Stability hypothesis]
\label{Thm:Stability}
Given a surface $S$ and a constant $k$, there is a constant $k' \ge k$
with the following property.  Suppose that $\gamma$ is a geodesic ray
in $\C(S)$, and $V_i$ is a sequence of compression bodies such that,
for all $i$, the segment $\gamma|[0,i]$ lies in a $k$--neighborhood of
$\calD(V_i)$.  Then there is a constant $k'$ and a non-trivial
compression body $W$, contained in infinitely many of the $V_i$, such
that that $\gamma$ is contained in a $k'$-neighborhood of $\calD(W)$.
\end{theorem}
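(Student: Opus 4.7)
The plan is to follow the strategy sketched in Section \ref{section:outline}. Since $\gamma|[0,i]$ lies in the $k$-neighbourhood of $\calD(V_i)$, for each $i$ we pick discs $D^{(i)}, E^{(i)} \in \calD(V_i)$ whose boundaries are within $k$ of $\gamma_0$ and $\gamma_i$, respectively. Applying Proposition \ref{prop:splitting seq} in $V_i$ produces a disc surgery sequence $\{D_j^{(i)}\}_{j=0}^{n_i}$ in $\calD(V_i)$ together with a carrying sequence of one-switch train tracks $\{\tau_j^{(i)}\}_{j=0}^{n_i}$, with $\partial D_j^{(i)}$ switch-dual to $\tau_j^{(i)}$ and $E^{(i)}$ carried by each $\tau_j^{(i)}$. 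Proposition \ref{prop:large} guarantees that every sufficiently intermediate $\tau_j^{(i)}$ is large, filling and birecurrent, so its vertex-cycle set is an $(L,L')$-marking for $S$.

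The main local step is to show that infinitely many of these sequences share a common intermediate train track. Fix a large constant $R$ (to be chosen). For any two indices $i, i'$ and any essential subsurface $X \subset S$, Theorem \ref{Thm:MMS} uniformly bounds $d_X(\tau_j^{(i)}, \tau_{j'}^{(i')})$ for any pair of $R$-intermediate train tracks in the respective sequences. Summing cut-offs via the Masur--Minsky distance estimate (Theorem \ref{theorem:mm dist}) converts this into a uniform bound on the marking graph distance between $\Lambda(\tau_j^{(i)})$ and $\Lambda(\tau_{j'}^{(i')})$. Since $\calM(S)$ is locally finite, infinitely many of these vertex-cycle markings coincide with some fixed $(L,L')$-marking $\mu^*$; by Lemma \ref{Lem:TracksFinite}, after a further subsequence the train track itself is a fixed $\tau^*$. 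Its switch-dual disc $D^*$ is then an essential disc in $\calD(V_i)$ for infinitely many $i$, with $\partial D^*$ lying within a uniformly bounded distance of $\gamma$.

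Next iterate along $\gamma$. Pick an increasing sequence $r_1 < r_2 < \cdots$ with gaps large compared to $R$ and all other constants above. Inductively, assume that after stage $n-1$ we have passed to an infinite subfamily of $\{V_i\}$ in which a non-trivial subcompression body $W_{n-1}$, generated by discs $D^*_1, \ldots, D^*_{n-1}$ with $\partial D^*_m$ close to $\gamma_{r_m}$, is common to all remaining $V_i$. Apply the local step at scale $r_n$ within this surviving family to extract a new disc $D^*_n$ whose boundary lies close to $\gamma_{r_n}$ and which lies in infinitely many of the remaining $V_i$. Let $W_n$ be the subcompression body of those $V_i$ generated by $D^*_1, \ldots, D^*_n$. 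We obtain an increasing chain $W_1 \subseteq W_2 \subseteq \cdots$ of subcompression bodies of a fixed-genus compression body; since the number of compressing handles is bounded by the genus of $S$, the chain stabilizes to a non-trivial compression body $W$ that is contained in infinitely many $V_i$. By the $Q$-quasi-convexity of $\calD(W)$ in $\C(S)$, together with the fact that $\partial D^*_n \in \calD(W)$ lies uniformly close to $\gamma_{r_n}$ and the $r_n$ have bounded gaps, $\gamma$ lies in a $k'$-neighbourhood of $\calD(W)$ for a uniform $k'$.

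The main obstacle is the local step: converting the subsurface projection bound from Theorem \ref{Thm:MMS} into a marking graph bound while controlling the cut-off in Theorem \ref{theorem:mm dist}, and then upgrading the resulting bounded-distance statement to an actual coincidence of train tracks via local finiteness of $\calM(S)$ and Lemma \ref{Lem:TracksFinite}. Once this is in hand, the diagonal extraction of the $D^*_n$ and the stabilization of the chain $\{W_n\}$ are essentially bookkeeping, provided the scales $r_n$ are chosen so that at each stage $n$ the disc $D^*_n$ is genuinely $R$-intermediate with respect to the initial segment of $\gamma$ being controlled.
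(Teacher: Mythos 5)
Your overall strategy matches the paper's: use the disc surgery sequence and carrying sequence from Proposition \ref{prop:splitting seq}, then Theorem \ref{Thm:MMS} plus the Masur--Minsky distance formula and Lemma \ref{Lem:TracksFinite} to pigeonhole a common intermediate train track, hence a common switch-dual disc, among infinitely many of the $V_i$; then iterate along the ray to build an increasing chain of compression bodies, and appeal to quasiconvexity at the end. The local step is essentially Lemma \ref{lemma:close} of the paper, and is fine.

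There is, however, a genuine gap in the iteration. You write ``Let $W_n$ be the subcompression body of those $V_i$ generated by $D^*_1, \ldots, D^*_n$'' as though this were a single well-defined compression body. It is not, a priori. Even though $\partial D^*_1, \ldots, \partial D^*_n$ bound discs in each surviving $V_i$, the discs they bound (as isotopy classes of embedded discs in $V_i$) may sit differently in different $V_i$'s, producing \emph{non-isomorphic} subcompression bodies as vertices of $\mathcal{H}(S)$. The data that determines the isomorphism type of the compression is the pattern of arcs in which the discs intersect, equivalently the pairing on $\partial D^*_m \cap \partial D^*_{m'}$ induced by the intersection arcs, and this pairing depends on $V_i$, not just on the curves in $S$. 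The paper handles this with the Casson--Long observation (cited from \cite{casson-long}) that two curves simultaneously compress if and only if there exists a pairing on their intersection points that is simultaneously unlinked on both; there are only finitely many such pairings, so after passing to a further subsequence one may assume the pairing --- and hence the subcompression body --- is the same for all remaining $V_i$. Without this finiteness argument, the chain $W_1 \subseteq W_2 \subseteq \cdots$ is not a chain of actual compression bodies common to the surviving $V_i$, and the stabilization step and the final conclusion do not follow. You should insert, at each stage of the induction, a pass to a further infinite subfamily on which the relevant intersection pairings (equivalently, the isotopy classes of the discs in $V_i$) coincide, so that $W_n$ is a single vertex of $\mathcal{H}(S)$ contained in all surviving $V_i$.
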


We first show the following.

\begin{lemma} \label{lemma:close} %
There is a constant $K$, which depends on $S$, such that for any two
essential simple closed curves $a$ and $b$ in $S$, with
$d_S(a, b) \ge 3K$, there is a constant $N(a, b, K)$, such that for
any collection $\{ V_i \}_{i=1}^N $ of compression bodies with
$d_S(a, \mathcal{D}(V_i)) \le K$ and $d_S(b, \mathcal{D}(V_i)) \le K$
there are at least two compression bodies $V_i$ and $V_j$ which share
a common simple closed curve $c$.  Furthermore, the curve $c$ is
$K$-intermediate for $a$ and $b$.
\end{lemma}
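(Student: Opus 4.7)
The plan is to use the splitting/surgery sequences from Proposition \ref{prop:splitting seq} to package each compression body $V_i$ together with a preferred sequence of train tracks running from near $a$ to near $b$, and then pigeonhole on an intermediate train track. First, for each $V_i$ pick discs $D_i, E_i \in \calD(V_i)$ with $d_S(a, \bdy D_i) \le K$ and $d_S(b, \bdy E_i) \le K$, and apply Proposition \ref{prop:splitting seq} to obtain a disc surgery sequence $\{D_j^{(i)}\}_{j=0}^{n_i}$ connecting $D_i$ to something disjoint from $E_i$, together with a carrying sequence of train tracks $\{\tau_j^{(i)}\}_{j=0}^{n_i}$ such that $\bdy D_j^{(i)}$ is switch-dual to $\tau_j^{(i)}$. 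By Theorem \ref{theorem:qg} each sequence $\{\Lambda(\tau_j^{(i)})\}$ is a reparameterized $Q$-quasigeodesic in $\C(S)$, and by Proposition \ref{prop:large} any $K$-intermediate track in the sequence is birecurrent and filling, so its vertex cycles form an $(L,L')$-marking (for a uniform choice of $L,L'$ depending only on $S$).

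Next, for each $i$ choose a $K$-intermediate train track $\tau_i^* = \tau_{j(i)}^{(i)}$ in the $i$th carrying sequence, where $K$ is chosen larger than the constant $B$ of Theorem \ref{Thm:MMS} and larger than the quasigeodesic constants coming from Theorem \ref{theorem:qg}. Applying Theorem \ref{Thm:MMS} to any two carrying sequences (with $A$ determined by $K$ and the quasiconvexity constant of the disc sets), I get that for every essential subsurface $X \subset S$,
\[ d_X\bigl(\tau_i^*,\, \tau_{i'}^*\bigr) \le d_X(a,b) + B', \]
for some constant $B'$ depending only on $S$ and $K$. Summing the cut-off projection distances over all subsurfaces and applying the Masur--Minsky distance estimate, Theorem \ref{theorem:mm dist}, I conclude that the markings $\Lambda(\tau_i^*)$ all lie within some uniform word-metric distance $R = R(a,b,K)$ of $\Lambda(\tau_1^*)$.

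Since the marking graph $\calM(S)$ is locally finite, there are only finitely many markings within word distance $R$ of $\Lambda(\tau_1^*)$. By Lemma \ref{Lem:TracksFinite}, each such marking is the vertex-cycle marking of at most $N_0$ train tracks. Hence there are only finitely many (say $N' = N'(a,b,K)$) isotopy classes of train track that can appear as an intermediate track $\tau_i^*$. Taking $N = N' + 1$ and applying the pigeonhole principle, two indices $i \ne i'$ must satisfy $\tau_i^* = \tau_{i'}^*$ as train tracks in $S$. The disc switch-dual to this common track is a single curve $c = \bdy D_{j(i)}^{(i)} = \bdy D_{j(i')}^{(i')}$ which lies simultaneously in $\calD(V_i)$ and $\calD(V_{i'})$. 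Finally, because $c$ is distance one in $\C(S)$ from a vertex cycle of the $K$-intermediate track $\tau_i^*$, it lies (up to constants absorbed into the $(K,K)$-quasigeodesic constants) on the quasigeodesic from $a$ to $b$ provided by the Masur--Minsky splitting sequence quasigeodesic, and is distance at least $K - O(1)$ from each endpoint; by enlarging $K$ at the outset by a uniform additive constant, $c$ is $K$-intermediate for $a$ and $b$.

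The main obstacle will be arranging all of the ``uniform'' constants so that the bound $R(a,b,K)$ on the marking distance depends only on $a, b, K$ and the topology of $S$, rather than on the particular $V_i$: this requires carefully tracking the hypotheses of Theorem \ref{Thm:MMS} across all pairs of carrying sequences and checking that the constant $A$ there is controlled by $K$ and the quasiconvexity constants of the disc sets. A secondary subtlety is confirming that the curve $c$ produced by pigeonhole actually satisfies the $K$-intermediate condition as defined (lying on a $(K,K)$-quasigeodesic between $a$ and $b$ and at distance at least $K$ from each), since the carrying sequence connects curves near $a$ and $b$ rather than $a$ and $b$ themselves; this is handled by absorbing the $K$-neighbourhood into the quasigeodesic constants.
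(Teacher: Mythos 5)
Your proposal is correct and follows essentially the same route as the paper's proof: fix discs in each $\calD(V_i)$ near $a$ and $b$, run the surgery/splitting sequence of Proposition~\ref{prop:splitting seq}, use Proposition~\ref{prop:large} and Theorem~\ref{Thm:MMS} to bound subsurface projections of $K$-intermediate tracks, invoke the Masur--Minsky distance formula and local finiteness of the marking graph together with Lemma~\ref{Lem:TracksFinite} to pigeonhole a common intermediate train track, and conclude via switch-duality that the two compression bodies share a disc. The only addition is your explicit care over absorbing the $O(1)$ error into the $K$-intermediacy constants, which the paper treats implicitly.
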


For fixed $K$, the constant $N$ is coarsely equivalent to the smallest
marking distance between any markings containing $a$ and $b$.

\begin{proof}[Proof of Lemmma \ref{lemma:close}]
For each compression body $V_i$, choose curves $a_i$ and $b_i$ in
$\mathcal{D}(V)$ such that $d_S(a, a_i) \le K$ and
$d_S(b, b_i) \le K$. The discs bounded by $a_i$ and $b_i$ in $V_i$
determine a disc surgery sequence $\{ D^i_j \}_{j=0}^{n_i - 1}$, with
$D^i_0 = a_i$ and $D^i_{n_i - 1}$ disjoint from $b_i$, and a train
track carrying sequence $\{ \tau^i_j \}_{j=0}^{n_i - 1}$.

By Proposition \ref{prop:large} there is a $K$ such that for all
$i, j$, every $K$-intermediate train track $\tau^i_j$ is filling and
birecurrent.  By Theorem \ref{Thm:MMS}, and bounded geodesic
projections, there is a constant $K_1$ such that for all
$K$-intermediate train tracks $\tau^{i_1}_{j_1}$ and
$\tau^{i_2}_{j_2}$, and any subsurface $Y \subset S$,
\[ d_Y( \tau^{i_1}_{j_1}, \tau^{i_2}_{j_2}) \le d_Y(a, b ) + K_1.  \]
Therefore, using the Masur-Minsky distance formula, Theorem
\ref{theorem:mm dist}, with cutoff $M$ larger than $K_1 + M_0$, there
is a constant $N_1$ such that
\[ d_{\mathcal{M}}( \tau^{i_1}_{j_1} , \tau^{i_2}_{j_2} ) \le
N_1.  \]
By Lemma \ref{Lem:TracksFinite} there are most $N_2$ train tracks
$\tau$ with $\Lambda(\tau) = \mu$, for any marking $\mu$.  As
$d_{\mathcal{M}}$ is a proper metric, for any $K$-intermediate track
$\tau^i_j$, there are at most
$N_3 = N_2 \norm{ B_{\mathcal{M}}(\Lambda(\tau^i_j), N_1) }$ train
tracks with markings within distance $N_1$ of $\Lambda(\tau^i_j)$.
Thus, if there are at least $N_3 + 1$ compression bodies, at least two
of them must share a common train track.

For each train track $\tau^i_j$, there is a unique simple closed curve
that is switch-dual to $\tau^i_j$, and bounds a disc in the
compression body $V_i$. Therefore, if there are at least $N = N_3 + 1$
compression bodies, at least two of them must have a disc in common.
\end{proof}

We now complete the proof of Theorem \ref{Thm:Stability}.

\begin{proof}[Proof of Theorem \ref{Thm:Stability}]
Choose a subsequence $(n_k)_{k \in \N_0}$ with $n_0 = 0$ such that
$d_S(\gamma(n_{k}), \gamma(n_{k+1}) ) \ge 3 K$ for all $k$.

By Lemma \ref{lemma:close}, there is an infinite subset of the
compression bodies $V_i$ such that the $V_i$ contain a simple closed
curve $\partial D_0$ which is $K$-intermediate for $\gamma(n_0)$ and
$\gamma(n_1)$. We may pass to this infinite subset, and then apply
Lemma \ref{lemma:close} to $\gamma(n_1)$ and $\gamma(n_2)$, producing
a simple closed curve $\partial D_1$, which is $K$-intermediate for
$\gamma(n_1)$ and $\gamma(n_2)$, and such that $\partial D_0$ and
$\partial D_1$ simultaneously compress in infinitely many compression
bodies.

Isotope $\partial D_0$ and $\partial D_1$ in $S$ so they realize their
geometric intersection number.  By work of Casson and Long \cite[Proof
of Lemma 2.2]{casson-long}, the curves $\partial D_0$ and
$\partial D_1$ simultaneously compress in a handlebody $V$ if and only
if there is a pairing on the points of
$\partial D_0 \cap \partial D_1$ that is simultaneously unlinked on
$\partial D_0$ and on $\partial D_1$.  By the previous paragraph, we
know that there is at least one such pairing. There are only finitely
many such pairings, so we may pass to a further subsequence of the
$V_i$ where $\partial D_0$ and $\partial D_1$ compress in all of the
$V_i$, and with the same pairing.  It follows that these $V_i$ share a
common non-trivial compression body $W_1$, containing $\partial D_0$
and $\partial D_1$.

We now iterate the argument, finding simultaneous compressions
$\partial D_1, \partial D_2, \partial D_3, \ldots$ for a descending
chain of subsequences of $\{V_i\}$.  This gives rise to an ascending
chain of compression bodies
$W_1 \subset W_2 \subset W_3 \subset \ldots$.  However, an ascending
chain of compression bodies must stabilize after finitely many steps.

If there is some $m$ so that $W_m$ is a handlebody, then infinitely
many of the $V_i$ are equivalent.  If there is some $m$ so that $W_m =
W_n$ for all $n > m$ then the compression body $W = W_m$ is contained
in infinitely many of the $V_i$.  In either case we have completed the
proof of Theorem \ref{Thm:Stability}.
\end{proof}

%%%%%%%%%%%%%%%%%%%%%%%%%%%%%%%%%%%%%%%%%%%%%%%%%%%%%%%%%%%%%%%%%%%%%%%%%%%%%%
\section{Infinite diameter} \label{section:diameter}
%%%%%%%%%%%%%%%%%%%%%%%%%%%%%%%%%%%%%%%%%%%%%%%%%%%%%%%%%%%%%%%%%%%%%%%%%%%%%%

We will use the following result of Klarreich \cite[Theorem
1.3]{klarreich}.  See also Hamenst\"adt \cite{ham}.

\begin{theorem}\cite[Theorem 1.3]{klarreich}\label{theorem:boundary}
The Gromov boundary of the complex of curves $\C(S)$ is homeomorphic
to the space of minimal foliations on $S$.
\end{theorem}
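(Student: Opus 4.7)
The plan is to identify the Gromov boundary $\partial \C(S)$ with the space $\fmin(S)$ of minimal foliations via the Teichm\"uller systole map, following Klarreich's original approach. The key bridge between the two spaces is that Teichm\"uller geodesics in $\T(S)$ project, via the systole map, to unparameterized quasi-geodesics in $\C(S)$, itself a consequence of the Masur--Minsky hyperbolicity theorem for the curve complex.

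First I would define a map $\Psi \from \fmin(S) \to \partial \C(S)$ as follows: given a minimal foliation $\mathcal{F}$, pick any Teichm\"uller ray $\{X_t\}_{t \ge 0}$ whose vertical foliation is $\mathcal{F}$, let $\alpha_t \in \C(S)$ be a systole of $X_t$, and define $\Psi(\mathcal{F})$ as the limit of $\alpha_n$ in $\partial \C(S)$. Well-definedness requires two inputs: the systole sequence is an unparameterized quasi-geodesic in $\C(S)$ (from Masur--Minsky), and any two Teichm\"uller rays with the same minimal vertical foliation produce systole sequences at bounded Hausdorff distance in $\C(S)$, which follows from the asymptotic behavior of such rays in the thick part of moduli space combined with stability of quasi-geodesics in the hyperbolic space $\C(S)$.

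For injectivity, two distinct minimal foliations $\mathcal{F}_1 \neq \mathcal{F}_2$ admit a curve $\beta$ with $i(\beta, \mathcal{F}_1) = 0 < i(\beta, \mathcal{F}_2)$, and this can be used to show that the systole sequences for $\mathcal{F}_1$ and $\mathcal{F}_2$ lie in distinct equivalence classes of quasi-geodesic rays, hence correspond to distinct boundary points. For surjectivity, given a quasi-geodesic ray $\{\alpha_n\}$ representing a boundary point, pass to a subsequence converging in $\PMF(S)$ to some foliation $\mu$; then show $\mu$ is minimal and filling, since any non-minimal or non-filling limit would give a subsurface receiving bounded subsurface projection of $\{\alpha_n\}$, contradicting, via the bounded geodesic image theorem, the assumption that $\{\alpha_n\}$ leaves every bounded set of $\C(S)$. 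A further argument shows that any two subsequential limits define the same class in $\fmin(S)$, so the inverse map $\Psi^{-1}$ is well-defined.

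The main obstacle is surjectivity together with continuity of $\Psi^{-1}$: one must rule out accumulation at non-minimal foliations and verify that convergence in $\partial \C(S)$ corresponds to convergence in $\fmin(S)$ with the quotient topology from $\PMF(S)$. This requires carefully combining the coarse geometry of $\C(S)$ (Gromov hyperbolicity, stability of quasi-geodesics, bounded geodesic image) with measured foliation theory (Masur's criterion for unique ergodicity, Thurston's compactification of Teichm\"uller space), and is where Klarreich's argument concentrates the bulk of its technical work.
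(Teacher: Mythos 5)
The paper does not prove this theorem; it is cited as an external result of Klarreich \cite[Theorem 1.3]{klarreich} (with an alternate proof attributed to Hamenst\"adt \cite{ham}) and is simply invoked in Section \ref{section:diameter} to obtain an ending lamination for a geodesic ray. There is thus no in-paper proof to compare against. Your sketch is a reasonable account of Klarreich's original strategy: build the map from minimal foliations to $\bdy \C(S)$ via systoles along a Teichm\"uller ray with prescribed vertical foliation, using the Masur--Minsky result that such systole sequences are unparameterized quasi-geodesics, then establish injectivity and surjectivity by foliation-theoretic and coarse-geometric arguments. The hard technical content you flag at the end — ruling out accumulation at non-minimal limits and matching the $\PMF$-quotient topology on $\fmin(S)$ to the boundary topology on $\bdy \C(S)$ — is indeed where Klarreich's work is concentrated, so the proposal correctly identifies where a full proof would need the most care. (One small caution: the boundary is naturally identified with the \emph{ending lamination space}, i.e. minimal filling laminations/foliations modulo forgetting the transverse measure; ``space of minimal foliations'' is shorthand for this quotient, and your surjectivity argument should be read with that quotient topology in mind, as you note.)
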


We may now complete the proof of Theorem \ref{Thm:InfDiam}.

\begin{proof}[Proof of Theorem \ref{Thm:InfDiam}]
Fix a geodesic ray $\gamma \from \N \to \C(S)$, and set
$\gamma_i = \gamma(i)$. Applying Theorem \ref{theorem:boundary}, let
$\lambda$ be the ending lamination associated to $\gamma$.  That is,
after fixing a hyperbolic metric on $S$, and after replacing all
curves by their geodesic representatives, the lamination $\lambda$ is
obtained from any Hausdorff limit of the $\gamma_i$ by deleting
isolated leaves. We say that the $\gamma_i$ \emph{superconverge} to
$\lambda$.  By Proposition \ref{prop:lamination} we may assume that
$\lambda$ does not lie in the zero set of any compression body.

Suppose that $\pi_{\mathcal{H}(S)} \circ \gamma$ has finite
diameter. By Theorem \ref{Thm:Ray} there is a compression body $V$, a
constant $k$, and a sequence of meridians $\partial D_i \in \calD(V)$
so that $d_S(\gamma_i, \partial D_i) \leq k$.  Kobayashi's Lemma
\cite{kobayashi}*{Proposition 2.2} implies the $\partial D_i$ also
superconverge to $\lambda$. It follows that any accumulation point of
the $\partial D_i$, taken in $\PML(S)$, is supported on $\lambda$. So,
picking any measure of full support for $\lambda$, realizes $\lambda$
as an element of $Z(V)$, contradicting our initial choice of
$\lambda$.
\end{proof}

%%%%%%%%%%%%%%%%%%%%%%%%%%%%%%%%%%%%%%%%%%%%%%%%%%%%%%%%%%%%%%%%%%%%%%%%%%%%%%
\section{Compression bodies of restricted topological type}
\label{section:restricted}
%%%%%%%%%%%%%%%%%%%%%%%%%%%%%%%%%%%%%%%%%%%%%%%%%%%%%%%%%%%%%%%%%%%%%%%%%%%%%%

Consider the poset $\CB(S)$ of compression
bodies, ordered by inclusion, as described above.  We can take the
quotient by the action of the mapping class group of the upper
boundary surface $S$.  This gives the poset of topological types of
compression bodies, which we shall denote $\CB_\top(S)$.  Let $A$ be a
non-empty subset of $\CB_\top(S)$.  We say that $A$ is
\emph{downwardly closed} if $A$ is closed under passing to
subcompression bodies. Define the \emph{restricted compression body
  graph} $\CB(S, A)$ to be the subposet of $\CB(S)$ where we require
all vertices to have topological type lying in $A$.

\begin{theorem}
\label{Thm:restrict}
Let $A$ be a downwardly closed connected non-empty subset of
topological types of compression body. Then the restricted compression
body graph $\mathcal{H}(S, A)$ is an infinite diameter Gromov
hyperbolic metric space.
\end{theorem}

\noindent Our methods apply in this generality, but in fact Theorem
\ref{Thm:restrict} also follows quickly from Theorem
\ref{Thm:InfDiam}.

\begin{proof}[Proof of Theorem \ref{Thm:restrict}]
If $A \subset B$ are downwardly closed subsets of $\CB_\top(S)$ then
the inclusion $\CB(S,A) \subset \CB(S,B)$ is simplicial and is
coarsely onto.  As a special case, $\CB(S, A) \subset \calH(S)$ is
simplicial and coarsely onto.  Thus $\CB(S, A)$ has infinite diameter.
\end{proof}

% We shall write $\calA$ for the collection of all disc sets of
% compression bodies with topological type in $A$.  As well as
% constructing a restricted compression body graph $\calH(S, A)$, we may
% also consider $\C(S)_{\calA}$, the electrification of the curve
% complex along the disc sets of all compression bodies with topological
% type in $A$.  The two spaces $\calH(S, A)$ and $\C(S)_\calA$ are
% quasi-isometric as long as $\calH(S, A)$ is connected.  However, in
% all cases, the map from $\C(S)_\calA \to \C_\calD(S)$ is distance
% non-increasing, and so the fact that $\C_\calD(S)$ is infinite
% diameter implies that $\C(S)_\calA$ is also infinite diameter.

%%%%%%%%%%%%%%%%%%%%%%%%%%%%%%%%%%%%%%%%%%%%%%%%%%%%%%%%%%%%%%%%%%%%%%%
\section{Loxodromics} \label{section:loxodromics}
%%%%%%%%%%%%%%%%%%%%%%%%%%%%%%%%%%%%%%%%%%%%%%%%%%%%%%%%%%%%%%%%%%%%%%%

Let $G$ be a group acting by isometries on a Gromov hyperbolic space
$(X, d_X)$ with basepoint $x_0$.  We say that an element $h \in G$
acts \emph{loxodromically} if the translation length
\[ \tau(h) = \lim_{n \to \infty} \tfrac{1}{n} d_X( x_0, h^n x_0) \]
is positive.  This implies that $h$ has a unique pair of fixed points
$\{\lambda^+_h, \lambda^-_h\}$ in the Gromov boundary $\partial X$.
Masur and Minsky \cite{mm1} showed that for the action of the mapping
class group on the curve complex, an element is loxodromic if and only
if it is pseudo-Anosov.  We say two loxodromic isometries $h_1$ and
$h_2$ are \emph{independent} if their pairs of fixed points
$\{ \lambda^+_{h_1}, \lambda^-_{h_1} \}$ and
$\{ \lambda^+_{h_2}, \lambda^-_{h_2} \}$ in the Gromov boundary
$\partial X$ are disjoint.

We remark that there are many pseudo-Anosov elements for which no
power extends over any compression body.  For example, if a power of
$g$ extends over a compression body, then some power of $g$ preserves
a rational subspace of first homology.  However, generic elements of
$\operatorname{Sp}(2g, \Z)$ do not do this, see for example
\cites{dt,rivin}.  We now give an alternate geometric argument to show
the existence of pseudo-Anosov elements for which no power extends over
a compression body.

\begin{lemma}\label{lemma:pA}
Let $S$ be a closed surface of genus at least two.  Then there is a
mapping class group element $h \in \text{Mod}(S)$ which acts
loxodromically on the compression body graph $\calH(S)$.
\end{lemma}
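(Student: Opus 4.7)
The plan is to produce a pseudo-Anosov $h \in \text{Mod}(S)$ whose stable and unstable laminations $\lambda^\pm_h \in \PML(S)$ both lie outside $\overline{\calD(V)}$ for every non-trivial compression body $V$ of topological type in $A$, and then to deduce loxodromicity on $\calH(S,A)$ directly from Theorem \ref{Thm:Ray}. Given such an $h$, the axis of $h$ is a quasigeodesic $\gamma \colon \Z \to \C(S)$ with endpoints $\lambda^\pm_h$ in the Gromov boundary. By Theorem \ref{theorem:quotient}, its projection $\pi_{\calH(S,A)}(\gamma)$ is a reparameterized quasigeodesic. If that image were bounded, applying Theorem \ref{Thm:Ray} to the two rays comprising $\gamma$ would produce a compression body $V$ of type in $A$ and a constant $k$ with $\gamma$ contained in a $k$-neighbourhood of $\calD(V)$, forcing $\lambda^\pm_h \in \overline{\calD(V)}$ and contradicting the choice of $h$. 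Therefore the projection of $\gamma$ in $\calH(S,A)$ is unbounded; since $h$ translates $\gamma$, this gives $h$ positive translation length on $\calH(S,A)$, so $h$ is loxodromic there.

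To produce such $h$, I would use the harmonic measure form of Proposition \ref{prop:lamination}. The collection of non-trivial compression bodies $V$ with type in $A$ is countable, and each $Z(V)$ is closed and has harmonic measure zero in $\PML(S)$ by the remark following Proposition \ref{prop:lamination}. Hence $\bigcup_V Z(V)$ has harmonic measure zero. Fix a finitely supported symmetric probability measure on $\text{Mod}(S)$ whose support generates the whole group. By Kaimanovich and Masur \cite{km} the resulting random walk converges almost surely in $\overline{\C(S)}$ to a minimal uniquely ergodic lamination $\lambda_\infty \in \PML(S)$, and almost surely $\lambda_\infty \notin \bigcup_V Z(V)$. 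By Maher \cite{maher_heegaard}, the sampled elements $w_n$ are almost surely pseudo-Anosov for large $n$, with attracting fixed points $\lambda^+_{w_n}$ converging to $\lambda_\infty$; the same argument applied to the reversed walk controls $\lambda^-_{w_n}$. Since each $Z(V)$ is closed, we may choose a sample path along which $\lambda^\pm_{w_n}$ eventually both miss $\bigcup_V Z(V)$ and set $h = w_n$ for such $n$.

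The main obstacle is precisely this last transfer: the measure-theoretic argument only shows that the random hitting point $\lambda_\infty$ avoids $\bigcup_V Z(V)$ almost surely, while what we actually need is that the pseudo-Anosov fixed points $\lambda^\pm_{w_n}$ themselves avoid this union. This is handled by combining the closedness of each $Z(V)$ with the boundary convergence $\lambda^\pm_{w_n} \to \lambda_\infty$ from Maher's work, together with a countability argument over the family $\{Z(V)\}_V$. A non-probabilistic alternative sketched in the remark preceding the lemma statement (generic elements of $\text{Sp}(2g,\Z)$ do not preserve a proper rational subspace of $H_1(S;\Q)$) would require the Biringer--Johnson--Minsky theorem, which is re-proved only in Section \ref{section:applications}, so the random walk approach is the natural self-contained route within the machinery already established in the excerpt.
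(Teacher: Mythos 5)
Your strategy is genuinely different from the paper's. The paper does not attempt to construct a pseudo-Anosov whose axis escapes to infinity in $\calH(S,A)$; instead it picks a compression body $V$ containing two discs whose boundaries $\alpha,\alpha'$ fill $S$, takes $f$ to be a product of high powers of Dehn twists about $\alpha$ and $\alpha'$ (pseudo-Anosov by Thurston), and observes that $f$ preserves $\calD(V)$ with quasi-axis lying inside $\calD(V)$. Then Lemma \ref{lemma:coarse} converts ``$G$ acts loxodromically on one of the quasiconvex pieces and has unbounded orbits on $X_\calY$'' into the existence of a loxodromic element of $X_\calY$. That lemma does the heavy lifting, and it is also exactly what is needed later for Corollary \ref{corollary:johnson}, so the paper's route earns its keep. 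Your approach --- find a pseudo-Anosov $h$ with $\lambda_h^{\pm}$ avoiding every $Z(V)$ and apply Theorem \ref{Thm:Ray} to both ends of the axis --- is the same shape as the paper's proof of the later Proposition (``every isometry of $\calH$ is elliptic or loxodromic''), and the deduction of positive translation length from unboundedness of the projected bi-infinite quasigeodesic is correct, though you should spell out that the reparameterization cannot collapse the $h$-period (otherwise the image would be bounded).

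The gap is in the construction of such an $h$ via random walks, and you put your finger on it yourself but do not actually close it. From ``the hitting point $\lambda_\infty$ of a random walk a.s.\ avoids $\bigcup_V Z(V)$'' and ``$\lambda^{\pm}_{w_n}\to\lambda_\infty$'' you cannot conclude that $\lambda^{\pm}_{w_n}$ eventually avoid $\bigcup_V Z(V)$. Closedness of each individual $Z(V)$ gives you, for each fixed $V$, that $\lambda^{\pm}_{w_n}\notin Z(V)$ for $n$ large --- but ``large'' depends on $V$, and the complement of the countable union $\bigcup_V Z(V)$ is a dense $G_\delta$ that in general has empty interior. There is no neighbourhood of $\lambda_\infty$ missing the whole union, so a sequence converging to $\lambda_\infty$ can intersect $\bigcup_V Z(V)$ for every $n$. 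The ``countability argument over the family $\{Z(V)\}_V$'' you appeal to does not exist. Ironically, the alternative you dismiss is the one that works: since $h$ is pseudo-Anosov its laminations are uniquely ergodic, so $\lambda^{\pm}_h\in Z(V)$ is equivalent to $\lambda^{\pm}_h\in\overline{\calD(V)}$, and by the Biringer--Johnson--Minsky theorem this happens iff some power of $h$ extends over a subcompression body of $V$. Taking $h$ acting on $H_1(S;\Z)$ without any proper invariant rational subspace (Dunfield--Thurston, Rivin) rules this out. The fact that BJM is re-proved later in this paper is not an obstruction: the original \cite{bjm} is an independent external reference and can be cited directly, making this route self-contained and shorter than the random-walk one you attempted.
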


% Let $G$ act by isometries on $X$.  We say that a collection of
% uniformly quasi-convex subsets $\calY$ of $X$ is \emph{$G$-equivariant}
% if for all $g \in G$ and $Y \in \calY$, the translate $g Y$ also lies
% in $\calY$.  We say that $G$ acts with \emph{unbounded orbit} on $Y$,
% if for any $y \in Y$, and any number $R$, there is a group element
% $g \in G$ with $g y \in Y$, and $d_X(y, gy) \ge R$.

We prove the following, more general result.  Let $G$ act by
isometries on $X$, and let $Y$ be a quasi-convex subset of $X$.  We
say that $G$ \emph{acts loxodromically} on $(X, Y)$, if $G$ contains a
loxodromic element $g$ whose quasi-axis is contained in a bounded
neighbourhood of $Y$.

\begin{lemma}\label{lemma:coarse}
Let $G$ be a group acting by isometries on a Gromov hyperbolic space
$X$, and let $\calY$ be a collection of uniformly quasi-convex subsets
of $X$.  Furthermore, suppose that $G$ acts with unbounded orbits on
$X_\calY$, and there is a quasi-convex set $Y \in \calY$, such that
$G$ acts loxodromically on $(X, Y)$.  Then $G$ contains an element
which acts loxodromically on $X_\calY$.
\end{lemma}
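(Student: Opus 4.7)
My plan is a ping-pong argument that combines $g$ with a conjugate moving $Y$ far away in $X_\calY$. Note that $g$ itself has bounded orbit in $X_\calY$, since its quasi-axis sits in a bounded neighbourhood of $Y$, and $Y$ has $X_\calY$-diameter one; so we cannot simply use $g$.

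First, using that $G$ has unbounded orbits on $X_\calY$, I would choose $f \in G$ with $d_{X_\calY}(y_0, f y_0) \ge R$ for some basepoint $y_0 \in Y$ and a threshold $R$ to be specified later. Since the inclusion $X \hookrightarrow X_\calY$ is $1$-Lipschitz, this also forces $d_X(Y, fY) \ge R$. Set $g' = f g f^{-1}$; conjugation yields a loxodromic isometry of $X$ with the same translation length as $g$, whose quasi-axis lies in a bounded neighbourhood of $fY$.

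Next, I would show that for $R$ larger than a constant depending only on the hyperbolicity constant of $X$, the quasi-convexity constant of $\calY$, and $\tau(g)$, the elements $g$ and $g'$ are independent loxodromics of $X$, meaning their four fixed points on $\partial X$ are distinct. This uses the standard fact that in a Gromov hyperbolic space, two quasigeodesics asymptotic to a common point at infinity lie at bounded Hausdorff distance along the shared tail: if any fixed point of $g'$ coincided with one of $g$, then the quasi-axes, and hence $Y$ and $fY$, would be at bounded $X$-distance, contradicting $d_X(Y, fY) \ge R$. The standard ping-pong lemma then supplies an $N$ such that $h = g^N (g')^N$ is loxodromic on $X$, with a quasi-axis whose orbit under $\langle h \rangle$ visits bounded neighbourhoods of the translates of $Y$ and of $fY$ alternately.

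Finally, I would deduce loxodromicity on $X_\calY$ by invoking Theorem \ref{theorem:quotient}: the quasi-axis of $h$ in $X$ projects to a reparameterised quasigeodesic in $X_\calY$. Each fundamental domain of $\langle h \rangle$ on the axis contains a subpath running from near $Y$ to near $fY$, which projects to an arc of $X_\calY$-length at least $d_{X_\calY}(Y, fY) - O(1)$. Choosing $R$ larger than this implicit additive constant makes the $X_\calY$-translation length of $h$ strictly positive, so $h$ acts loxodromically on $X_\calY$.

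The main obstacle is the ping-pong step: converting the $X_\calY$-separation hypothesis into genuine dynamical independence of $g$ and $g'$ on $\partial X$, since a priori their fixed points may all lie in the $G$-orbit of the limit set of $Y$, and one needs a quantitative estimate to rule out accidental coincidences once $Y$ and $fY$ are pulled apart in $X$. A secondary worry, that the projected quasi-axis might collapse further in $X_\calY$, is resolved uniformly by Theorem \ref{theorem:quotient}.
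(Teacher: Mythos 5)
Your proposal takes a genuinely different route from the paper, and it does work, though the last step is stated too loosely.

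The paper's proof sets $h = f^{\ell}g$ (in your notation $h$ would be $g^{\ell}f$), where $g$ acts loxodromically on $Y$ and $f$ has a far $X_{\calY}$-orbit, and then verifies directly that $\{h^n Y\}_{n\in\Z}$ is an $(L,M)$-$\calY$-separated family, feeding this into Propositions \ref{prop:well-separated} and \ref{prop:bi-infinite}. You instead conjugate to form $g' = fgf^{-1}$, prove $g$ and $g'$ are independent loxodromics of $X$, and take $h = g^N(g')^N$ via ping-pong. Both constructions are legitimate; yours is more classical and bypasses building up the $(L,M)$-$\calY$-separated machinery, while the paper's machinery is built precisely so that it can also be reused verbatim in Proposition \ref{prop:schottky} and Corollary \ref{corollary:johnson}. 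Your independence step is fine: if $g$ and $g'$ shared a boundary fixed point, the tails of their quasi-axes would fellow-travel, forcing $Y$ and $fY$ within bounded $X$-distance, which your choice of $f$ (and the Lipschitz inclusion $X \hookrightarrow X_{\calY}$) rules out for $R$ large.

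The final step is where you are too brief. Knowing that one fundamental domain of the $h$-axis projects to an arc whose endpoints are $\geq R - O(1)$ apart in $X_{\calY}$ only bounds the \emph{diameter} of the image from below; by itself it does not preclude the reparameterised quasigeodesic stalling (bounded image), in which case $h$ would be elliptic on $X_{\calY}$. What rescues the argument is the ordering: the $h$-axis passes, in order, near $fY, Y, hfY, hY, h^2fY, \dots$, and by $h$-equivariance each pair $(h^n fY, h^n Y)$ is $\geq R - O(1)$ apart in $X_{\calY}$. Because the projected axis is a reparameterised $(K',c')$-quasigeodesic and these waypoints are monotonically ordered along it, each such pair forces the underlying quasigeodesic parameter to advance by at least a fixed positive amount; summing over $n$ gives $d_{X_{\calY}}(x_0, h^n x_0) \gtrsim n$ and hence positive stable translation length. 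Equivalently, if the projected image were bounded, the points $\pi(h^n fY)$ and $\pi(h^n Y)$ would both accumulate at the same terminal point of the stalled quasigeodesic, contradicting their uniform $R$-separation. This is essentially the content of Propositions \ref{prop:qg gromov} and \ref{prop:bounded gp}, so you should invoke them explicitly rather than only Theorem \ref{theorem:quotient}. With that addition your argument is complete.
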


We now show that Lemma \ref{lemma:coarse} implies Lemma
\ref{lemma:pA}.

\begin{proof}[Proof of Lemma \ref{lemma:pA}]
Let $G = \text{Mod}(S)$ be the mapping class group acting on
$X = \calC(S)$, the complex of curves, which is Gromov hyperbolic.
Let $\calY$ consist of the collection of disc sets of compression
bodies, which is a collection of uniformly quasi-convex subsets of
$X$.  The space $X_\calY = \calH(S)$ has infinite diameter by Theorem
\ref{Thm:InfDiam}.  The mapping class group acts coarsely transitively
on $\calH(S)$, and so in particular acts with unbounded orbits.

Let $V$ be a handlebody and pick a pair of discs $D$ and $D'$ whose
boundaries $\alpha$ and $\alpha'$ fill $S$.  We define $f$ to be the
product of a right Dehn twist on $\alpha$, followed by a left Dehn
twist on $\alpha'$.
% Let $\gamma = f(\alpha)$.
The mapping class group element $f$ is pseudo-Anosov by work of
Thurston \cite{thurston}.  The element $f$ extends over the
compression body $V$, and the images of $\alpha$ under powers of $f$
is a quasi-axis for $f$, which is contained in $\calD(V)$, so $f$ acts
loxodromically on $\calD(V)$.

Lemma \ref{lemma:coarse} then implies that $G$ contains an element
which acts loxodromically on $\calH(S)$.
\end{proof}

The proof we present relies on the fact that $(K, c)$-quasi-geodesics
in the curve complex $\C(S)$ project to reparameterized
$(K', c')$-quasi-geodesics in the compression body graph $\calH(S)$,
where $K'$ and $c'$ depend only on $K$ and $c$.

We will use the following properties of coarse negative curvature.  We
omit the proofs of Propositions \ref{prop:npp diam}, \ref{prop:gromov
  product} and \ref{prop:qg gromov} below, as they are elementary
exercises in coarse geometry.

We say a path $\gamma$ is a \emph{$(K, c, L)$-local quasi-geodesic},
if every subpath of $\gamma$ of length $L$ is a
$(K, c)$-quasi-geodesic.  The following theorem gives a classical
``local to global'' property for negative curvature.

\begin{theorem}\cite{cdp}*{Chapter 3, Th\'eor\`eme 1.4, page 25}
\label{theorem:localqg}
Given constants $\delta, K$ and $c$, there are constants $K', c'$ and
$L_0$, such that for any $L \ge L_0$, any $(K, c , L)$-local
quasi-geodesic in a $\delta$-hyperbolic space $X$ is a
$(K', c')$-quasi-geodesic.
\end{theorem}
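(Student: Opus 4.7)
The plan is to combine two standard tools of coarse negative curvature: the Morse lemma (stability of quasigeodesics) and the thin-triangle condition. Let $R = R(\delta, K, c)$ be the Morse constant so that any $(K,c)$-quasigeodesic between points $p$ and $q$ in the $\delta$-hyperbolic space $X$ stays within the $R$-neighbourhood of every geodesic from $p$ to $q$. The aim is to choose $L_0$ much larger than $K(R + \delta + c)$, so that the forward progress forced by the local quasigeodesic condition strictly exceeds any backtracking permitted by $\delta$-thinness.

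Given a $(K, c, L)$-local quasigeodesic $\gamma \colon [a, b] \to X$ with $L \ge L_0$, I would sample it at points $x_i = \gamma(t_i)$ with $t_{i+1} - t_i = L/2$, and let $\eta$ be the polygonal path obtained by concatenating the geodesic segments $[x_i, x_{i+1}]$. By hypothesis each $\gamma|[t_i, t_{i+1}]$ is a genuine $(K, c)$-quasigeodesic, so by the Morse lemma $\eta$ and $\gamma$ lie within Hausdorff distance $R$ of one another; it therefore suffices to produce a global quasigeodesic estimate for $\eta$ with constants depending only on $\delta, K, c$.

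The technical heart is a no-backtracking estimate at each vertex of $\eta$. Since $\gamma|[t_{i-1}, t_{i+1}]$ also has length $L$ and hence is a $(K, c)$-quasigeodesic, the Morse lemma places $x_i$ within $R$ of the geodesic $[x_{i-1}, x_{i+1}]$; by the triangle inequality,
\[
d(x_{i-1}, x_i) + d(x_i, x_{i+1}) \le d(x_{i-1}, x_{i+1}) + 2R.
\]
Thus each vertex of $\eta$ contributes at most an additive defect of $2R$ to the total length. Since each side of $\eta$ has length at least $L/(2K) - c$, which by the choice of $L_0$ is much larger than $2R + 2\delta$, a standard shortest-counterexample argument, using the $\delta$-slim triangle condition to rule out large-scale folding of $\eta$, upgrades this per-vertex bound to a global linear estimate $\sum_i d(x_i, x_{i+1}) \le C \cdot d(x_0, x_n) + C'$, where $C, C'$ depend only on $\delta, K, c$. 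Interpolating through the short final segment and the samples $x_i$ then makes $\eta$, and hence $\gamma$, a global $(K', c')$-quasigeodesic.

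The main obstacle is the careful bookkeeping of constants: one must verify that the accumulated per-vertex defects never overwhelm the accumulated forward progress, and that the interior of each side of $\eta$ is genuinely close to the corresponding piece of $\gamma$ rather than merely within Hausdorff distance. Both issues are handled by taking $L_0$ to be a sufficiently large multiple of $K(R + \delta + c)$; the resulting constants $K'$ and $c'$ then depend only on $\delta, K$ and $c$, as required.
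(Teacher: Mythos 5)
The paper does not supply its own proof of this statement; it is cited as a known result from Coornaert--Delzant--Papadopoulos (Chapter 3, Th\'eor\`eme 1.4), so there is no in-paper argument to compare against. Assessed on its own terms, your outline follows the standard route: sample the local quasigeodesic on the scale $L/2$, use the Morse lemma to bound the Gromov product $\gp{x_i}{x_{i-1}}{x_{i+1}}$ at each sample point by a constant $R = R(\delta, K, c)$, and then argue that a piecewise geodesic with long sides and uniformly small Gromov products at its vertices is a global quasigeodesic. That reduction is correct, and your derivation of the per-vertex defect bound from the Morse lemma is fine.

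The gap is in the final step. You write that a ``standard shortest-counterexample argument, using the $\delta$-slim triangle condition'' upgrades the per-vertex defect bound to a global linear estimate; but that step \emph{is} the substantive content of the local-to-global theorem, and the per-vertex bound alone does not preclude the path from slowly spiraling back on itself over many segments. It is telling that the present paper proves exactly this broken-geodesic statement (Proposition \ref{prop:bounded gp}: long segments plus $R$-bounded Gromov products implies quasigeodesic) by \emph{invoking} Theorem \ref{theorem:localqg}, so if you were to appeal to that Proposition your argument would be circular. To close the gap you need an actual proof of the broken-geodesic lemma: the usual route is either an induction showing that the Gromov products $\gp{x_i}{x_0}{x_{i+1}}$ remain bounded by $R + O(\delta)$ as $i$ increases, whence $d(x_0, x_n) \ge \sum_i d(x_{i-1}, x_i) - 2n(R + O(\delta))$, or a monotone-projection argument onto the geodesic $[x_0, x_n]$. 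I do not see how a minimality (shortest counterexample) argument is the natural framing here, since there is no obvious way to pass from a failing path to a shorter failing path. I would also caution against calling the per-vertex Gromov-product bound a ``no-backtracking estimate'': it controls the turn at a single vertex, not accumulated drift, which is precisely what the missing lemma must control.
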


In fact, we will make use of the following special case of this
result.  A \emph{piecewise geodesic} is a path $\gamma$ which is a
concatenation of geodesic segments $\gamma_i = [x_i, x_{i+1}]$.  We
say a piecewise geodesic $\gamma$ has \emph{$R$-bounded Gromov
  products} if $\gp{x_i}{x_{i-1}}{x_{i+1}} \le R$ for all $i$.

\begin{proposition}\label{prop:bounded gp}
Given constants $\delta$ and $R$, there is are constants $L, K$ and
$c$ such that if $\gamma$ is a piecewise geodesic in a
$\delta$-hyperbolic space, with $R$-bounded Gromov products, and
$\norm{\gamma_i} \ge L$ for all $i$, then $\gamma$ is a
$(K, c)$-quasi-geodesic.
\end{proposition}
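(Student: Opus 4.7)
The plan is to reduce this to the local-to-global principle for quasigeodesics, Theorem~\ref{theorem:localqg}. Specifically, I will show that once $L$ is large relative to $R$, the piecewise geodesic $\gamma$, parameterized by arc length, is a $(1, 2R, L)$-local quasigeodesic, and then feed this into Theorem~\ref{theorem:localqg} with $K = 1$ and $c = 2R$ to obtain global constants $K'$ and $c'$ depending only on $\delta$ and $R$.

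The main technical step is a single-breakpoint estimate: for any $y$ on the segment $\gamma_i = [x_{i-1}, x_i]$ and any $z$ on $\gamma_{i+1} = [x_i, x_{i+1}]$, I claim $\gp{x_i}{y}{z} \le R$, equivalently $d(y, z) \ge d(y, x_i) + d(x_i, z) - 2R$. This is a one-line triangle-inequality computation: since $y$ lies on the geodesic from $x_{i-1}$ to $x_i$ and $z$ lies on the geodesic from $x_i$ to $x_{i+1}$, we have $d(x_{i-1}, y) = d(x_{i-1}, x_i) - d(y, x_i)$ and $d(z, x_{i+1}) = d(x_i, x_{i+1}) - d(x_i, z)$; combining the hypothesis $d(x_{i-1}, x_{i+1}) \ge d(x_{i-1}, x_i) + d(x_i, x_{i+1}) - 2R$ (which is just $\gp{x_i}{x_{i-1}}{x_{i+1}} \le R$ unpacked) with the triangle inequality $d(x_{i-1}, x_{i+1}) \le d(x_{i-1}, y) + d(y, z) + d(z, x_{i+1})$ yields the claim. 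Notably this step does not use $\delta$-hyperbolicity at all.

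Next, any arc-length subpath of $\gamma$ of length at most $L$ contains at most one breakpoint, since reaching a second breakpoint would require crossing a full segment of length at least $L$. A subpath containing no breakpoint is a genuine geodesic, and one crossing a single breakpoint is a $(1, 2R)$-quasigeodesic by the estimate above. Hence $\gamma$ is a $(1, 2R, L)$-local quasigeodesic, and Theorem~\ref{theorem:localqg} supplies the required threshold $L_0$ together with global constants $K'$ and $c'$ depending only on $\delta$ and $R$; taking $L \ge L_0$ completes the argument.

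The main obstacle, such as it is, is really just recognizing the shape of the argument: the hypothesis of $R$-bounded Gromov products is precisely what makes the essentially trivial two-segment estimate available, and that estimate is exactly the input needed for the local-to-global machinery of Theorem~\ref{theorem:localqg}. There is no serious calculation or new geometric idea beyond the algebra above, which is why the paper promises no proof.
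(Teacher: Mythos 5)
Your proposal is correct and follows essentially the same route as the paper: reduce to the local-to-global principle (Theorem~\ref{theorem:localqg}) by showing $\gamma$ is a $(1, 2R, L)$-local quasigeodesic, noting that a subpath of length $L$ meets at most two consecutive geodesic segments. The only difference is presentational — you spell out the triangle-inequality check for interior points $y, z$ rather than leaving it implicit in ``by the definition of the Gromov product,'' and you correctly observe that this step does not use $\delta$-hyperbolicity.
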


\begin{proof}
Consider a subpath of $\gamma$ which is concatenation of two geodesic
segments $[x_{i - 1}, x_i]$ and $[x_i, x_{i + 1}]$.  Then by the
definition of the Gromov product, this subpath is a
$(1, 2 \gp{x_i}{x_{i-1}}{x_{i+1}} )$-quasi-geodesic: that is, a
$(1, 2R)$-quasi-geodesic. As each subpath of $\gamma$ of length at
most $L$ is contained in at most $2$ geodesic subsegments, $\gamma$ is
therefore a $(1, 2R, L)$-local quasi-geodesic.  By Theorem
\ref{theorem:localqg}, given $\delta, 1$ and $2R$, there are constants
$L, K$ and $c$, such that any $(1, 2R, L)$-local quasi-geodesic is a
$(K, c)$-quasi-geodesic, as required.
\end{proof}

We now record the following property of quasiconvex sets inside a
hyperbolic space.  This follows from thin triangles and we omit the
proof.

\begin{proposition}\label{prop:npp diam}
Given constants $\delta$ and $Q$, there is a constant $R_0$, such that
any constant $R \ge R_0$ has the following properties: let $Y$ and
$Y'$ be $Q$-quasiconvex sets in a $\delta$-hyperbolic space $X$.  If
the distance between $Y$ and $Y'$ is at least $R$, then the nearest
point projection of $Y$ to $Y'$ has diameter at most $R$.
Furthermore, if $Y''$ is a $Q$-quasiconvex set distance at least $R$
from $Y$, and distance at most $R$ from $Y'$, then the distance
between the nearest point projections of $Y'$ and $Y''$ to $Y$ is at
most $R$.
\end{proposition}

Let $\calY$ be a collection of $Q$-quasi-convex sets in a
$\delta$-hyperbolic space $X$, such that $X_\calY$ has infinite
diameter.

Let $\calZ = \{ Z_i \}_{i \in N}$ be an ordered subcollection of the
$\calY$, where $N$ is a set of consecutive integers in $\Z$.  If
$N = \Z$, then we say that $\calZ$ is \emph{bi-infinite}.  We say that
$\calZ$ is \emph{$L$-well-separated} if $d_X(Z_i, Z_{i+1}) \ge L$ for
all $i$, and furthermore the distance between the nearest point
projections of $Z_{i-1}$ and $Z_{i+1}$ to $Z_i$ is also at least $L$.
Then $\calZ$ determines a collection of piecewise geodesics as
follows.  For each $Z_i$, let $p_i$ be a point in the nearest point
projection of $Z_{i-1}$ to $Z_i$, and let $q_i$ be a point in the
nearest point projection of $Z_{i+1}$ to $Z_i$.  Let $\gamma_\calZ$ be
a path formed from the concatenation of the geodesic segments
$[p_i, q_i]$ and $[q_i, p_{i+1}]$.  We will call such a path
$\gamma_\calZ$ a \emph{$\calZ$-piecewise geodesic}.

\begin{proposition}\label{prop:gromov product}
Given constants $\delta$ and $Q$, there is a constant $R$, such that
for any $Q$-quasi-convex set $Z$ in a $\delta$-hyperbolic space $X$,
and for any points $x$ in $X$ and $z$ in $Z$, with $p$ a nearest point
in $Z$ to $x$, then the Gromov product $\gp{p}{x}{z} \le R$, where $R$
depends only on $\delta$ and $Q$.
\end{proposition}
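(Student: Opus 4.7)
The plan is to work with the geodesic triangle $xpz$, and to combine its internal $\delta$-hyperbolic geometry with the quasiconvexity hypothesis on $Z$. The key input is the insize estimate for geodesic triangles in a $\delta$-hyperbolic space: there exists a point $c \in [p, z]$ at distance exactly $\gp{p}{x}{z}$ from $p$, and a point $c'' \in [x, z]$, with $d(c, c'')$ bounded by a uniform multiple of $\delta$. Conceptually, if $\gp{p}{x}{z}$ were large, then $c$ would be a point on $[p, z]$ far from $p$ yet still close to the geodesic $[x, z]$ joining $x$ to $z \in Z$, and that is hard to reconcile with $p$ being a closest point of $Z$ to $x$.

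First I would invoke the insize estimate to produce the points $c$ and $c''$ above. Next, since $p, z \in Z$ and $Z$ is $Q$-quasiconvex, the geodesic $[p, z]$ lies in the $Q$-neighborhood of $Z$, so there is a point $c' \in Z$ with $d(c, c') \le Q$. The nearest-point hypothesis then gives $d(x, p) \le d(x, c') \le d(x, c) + Q$. On the other hand, since $c$ is within a uniform multiple of $\delta$ of $c'' \in [x, z]$, the triangle inequality along $[x, z]$ yields $d(x, c) + d(c, z) \le d(x, z) + C\delta$ for some absolute constant $C$. Writing $d(c, z) = d(p, z) - \gp{p}{x}{z}$ and combining the two chains, the $d(x, c)$ terms cancel and one obtains an inequality of the form $2 \gp{p}{x}{z} \le Q + C\delta + \gp{p}{x}{z}$, hence the desired bound $\gp{p}{x}{z} \le Q + C\delta$, which gives the required $R$ depending only on $\delta$ and $Q$.

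Since the argument amounts to bookkeeping of constants around the insize estimate and the nearest-point inequality, there is no genuine obstacle. The one ingredient that must be quoted carefully is the insize estimate (as in Bridson--Haefliger, Chapter III.H): in a $\delta$-hyperbolic space, every geodesic triangle has three internal points, one on each side, pairwise within a uniform multiple of $\delta$, whose distances to the vertices realize the Gromov products, exactly as in the comparison tripod. Everything else is elementary manipulation of the definition of the Gromov product together with the defining inequality of a nearest-point projection.
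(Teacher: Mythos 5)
Your argument is correct. Note, however, that the paper does not actually prove Proposition \ref{prop:gromov product}; the authors omit the proofs of Propositions \ref{prop:npp diam}, \ref{prop:gromov product} and \ref{prop:qg gromov} explicitly, calling them elementary exercises in coarse geometry, so there is no in-text proof to compare against. Your chain of inequalities is sound: with $\Gamma := \gp{p}{x}{z}$, the insize estimate gives $c \in [p,z]$ with $d(p,c) = \Gamma$ and $d(x,c)+d(c,z)\le d(x,z)+C\delta$; quasiconvexity of $Z$ gives $c'\in Z$ with $d(c,c')\le Q$; the nearest-point hypothesis gives $d(x,p)\le d(x,c)+Q$; and substituting $d(c,z)=d(p,z)-\Gamma$ and adding yields $2\Gamma\le\Gamma+Q+C\delta$, so $\Gamma\le Q+C\delta$. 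This is exactly the kind of bookkeeping the authors had in mind. One very minor stylistic alternative, perhaps marginally shorter, is to take the internal point $w$ on the side $[x,p]$ (rather than $[p,z]$), which also satisfies $d(p,w)=\Gamma$; it lies within $C_0\delta$ of the internal point on $[p,z]$ and hence within $Q+C_0\delta$ of $Z$, so $d(x,p)=d(x,Z)\le d(x,w)+Q+C_0\delta=d(x,p)-\Gamma+Q+C_0\delta$, giving $\Gamma\le Q+C_0\delta$ directly. Both versions use the same insize input and the same nearest-point inequality, so the difference is cosmetic.
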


We now show that an $L$-well separated collection $\calZ$ of ordered
$Q$-quasi-convex sets gives rise to a natural family of quasi-geodesics.

\begin{proposition}\label{prop:well-separated}
Given constants $\delta$ and $Q$, there are constants $K, c$ and
$L_0$, such that for any $L \ge L_0$, and any collection $\calZ$ of
$L$-well-separated ordered $Q$-quasi-convex sets in a
$\delta$-hyperbolic space $X$, any $\calZ$-piecewise geodesic is a
$(K, c)$-quasi-geodesic.
\end{proposition}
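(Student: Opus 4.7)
The plan is to verify the hypotheses of Proposition \ref{prop:bounded gp}: a $\calZ$-piecewise geodesic has long segments and bounded Gromov products at its vertices, hence is a global quasigeodesic.

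First I will handle segment lengths. A $\calZ$-piecewise geodesic alternates between \emph{internal} segments $[p_i, q_i]$ lying inside a single $Z_i$ and \emph{connecting} segments $[q_i, p_{i+1}]$ joining $Z_i$ to $Z_{i+1}$. The second clause of $L$-well-separated gives $d(p_i, q_i) \ge L$, so the internal segments have length at least $L$. Since $p_{i+1} \in Z_{i+1}$ and $q_i \in Z_i$, the first clause gives $d(q_i, p_{i+1}) \ge d_X(Z_i, Z_{i+1}) \ge L$. So provided $L_0$ is chosen at least the local-quasigeodesic length threshold from Proposition \ref{prop:bounded gp}, the segment-length hypothesis holds.

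Next I will bound the Gromov products at the vertices $p_i$ and $q_i$. The key idea is that $p_i$ is a nearest point in $Z_i$ to $Z_{i-1}$ (and symmetrically for $q_i$), so Proposition \ref{prop:gromov product} is tailor-made for this, \emph{once we replace nearest-point-projection of a set by nearest-point-projection of a point}. Concretely, let $p'_i$ be a nearest point in $Z_i$ to the point $q_{i-1} \in Z_{i-1}$. If $L_0$ is chosen large enough for Proposition \ref{prop:npp diam} to apply to the pair $Z_{i-1}, Z_i$, then the projection of $Z_{i-1}$ to $Z_i$ has bounded diameter, so $d(p_i, p'_i) \le R$ for a constant $R = R(\delta,Q)$. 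Now Proposition \ref{prop:gromov product}, applied with $Z = Z_i$, point $x = q_{i-1}$, and $z = q_i \in Z_i$, bounds $(q_{i-1}, q_i)_{p'_i}$ by a constant $R'$, and the triangle inequality for Gromov products gives $(q_{i-1}, q_i)_{p_i} \le R' + R$. The same argument, with the roles of $Z_i$ and $Z_{i+1}$ reversed, bounds $(p_i, p_{i+1})_{q_i}$.

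With both hypotheses verified, Proposition \ref{prop:bounded gp} yields constants $K$ and $c$, depending only on $\delta$ and $Q$, such that any $\calZ$-piecewise geodesic is a $(K, c)$-quasigeodesic, and we take $L_0$ to be the maximum of the threshold from Proposition \ref{prop:bounded gp} and the constant needed for Proposition \ref{prop:npp diam} to apply. The main obstacle is bookkeeping: $p_i$ and $q_i$ are defined as nearest points in $Z_i$ to the \emph{set} $Z_{i\pm 1}$ rather than to the specific points $q_{i-1}$ and $p_{i+1}$ that actually appear on the path, and the bridge between these two notions is precisely the bounded-diameter-of-projection statement of Proposition \ref{prop:npp diam}, which is only available once consecutive sets are sufficiently far apart.
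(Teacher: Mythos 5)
Your proof is correct and takes essentially the same approach as the paper: verify the two hypotheses of Proposition \ref{prop:bounded gp} (long segments, bounded Gromov products at the breakpoints), using Proposition \ref{prop:npp diam} to pass from ``nearest point to a set'' to ``nearest point to the specific point on the path'' and Proposition \ref{prop:gromov product} for the actual Gromov product bound. If anything your write-up is slightly more careful than the paper's, which leaves the segment-length hypothesis implicit and has a couple of index typos at exactly the step you flag as the main bookkeeping obstacle.
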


\begin{proof}
By Proposition \ref{prop:npp diam}, there is a constant $R_1$, which
only depends on $\delta$ and $Q$, such that if $Z$ and $Z'$ are two
$Q$-quasi-convex sets distance at least $R_1$ apart, then the nearest
point projection of $Z$ to $Z'$ has diameter at most $R_1$.

By Proposition \ref{prop:gromov product}, for any $x \in X$ and $z \in
Z$, with $p$ a nearest point in $Z$ to $x$, then $\gp{p}{x}{z} \le
R_2$.  For two adjacent segments $[p_i, q_i], [q_i, p_{i+1}]$ in a
$\calZ$-piecewise geodesic, $q_i$ need not be the closest point on
$Z_i$ to $p_i$, but by Proposition \ref{prop:npp diam}, it is distance
at most $R_1$ from the nearest point $q'_i$ on $Z_i$ to $p_i$.  By the
definition of the Gromov product, if $d_X(q_i, q'_i) \le R_1$, then
the difference between the Gromov products $\gp{q_i}{p_i}{q_{i+1}}$
and $\gp{q'_i}{p_i}{q_{i+1}}$ is at most $R_1$, and so any
$\calZ$-piecewise geodesic has $(R_1+R_2)$-bounded Gromov products.
So, by Proposition \ref{prop:bounded gp}, there are constants $L, K$
and $c$, such that if every segment of a $\calZ$-piecewise geodesic
has length at least $L$, it is a $(K, c)$-quasi-geodesic.
\end{proof}

We say that an ordered set $\calZ = \{ Z_i\}_{i \in \Z}$ of uniformly
quasi-convex subsets of $X$ is \emph{$(L, M)$-$\calY$-separated}, if
$\calZ$ is $L$-well-separated in $X$,
% the sets $Z_i$ have uniformly bounded diameter under projection to
% $X_\calY$,
and $d_{X_\calY}(\pi(Z_i), \pi(Z_{i+1})) \ge M$ for all $i$.

\begin{proposition}\label{prop:qg gromov}
Given constants $\delta, K$ and $c$ there is a constant $R$, such that
for any reparameterized $(K, c)$-quasi-geodesic
$\gamma \colon \R \to X$ in a $\delta$-hyperbolic space $X$, for any
three numbers $r \le s \le t$, the Gromov product
$\gp{\gamma(s)}{\gamma(r)}{\gamma(t)} \le R$.
\end{proposition}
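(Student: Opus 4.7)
The plan is to reduce this to Morse stability plus a one-line triangle inequality computation. The key observation is that the Gromov product $\gp{p}{x}{y} = \tfrac{1}{2}(d(p,x) + d(p,y) - d(x,y))$ vanishes when $p$ lies on a geodesic from $x$ to $y$, and more generally is bounded above by the distance from $p$ to any such geodesic. So it suffices to show that $\gamma(t_2)$ lies within uniformly bounded distance of a geodesic from $\gamma(t_1)$ to $\gamma(t_3)$.

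First, I would invoke the Morse lemma for $(K,c)$-quasigeodesics in $\delta$-hyperbolic spaces: there is a constant $H = H(\delta, K, c)$ such that the image of any $(K,c)$-quasigeodesic with endpoints $x$ and $y$ lies in the $H$-neighborhood of every geodesic segment $[x,y]$. Since reparameterization does not change the image of a path, the restriction of $\gamma$ to $[t_1, t_3]$ has the same image as some genuine $(K,c)$-quasigeodesic with endpoints $\gamma(t_1)$ and $\gamma(t_3)$, and in particular $\gamma(t_2)$ lies in that image. Morse stability therefore gives a point $q$ on a geodesic from $\gamma(t_1)$ to $\gamma(t_3)$ with $d(\gamma(t_2), q) \le H$.

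Finally, I would conclude with the triangle inequality. Writing $p = \gamma(t_2)$, $x = \gamma(t_1)$, $y = \gamma(t_3)$, and using $d(q,x) + d(q,y) = d(x,y)$ since $q$ lies on a geodesic from $x$ to $y$, we get
\[
\gp{p}{x}{y} = \tfrac{1}{2}\bigl(d(p,x) + d(p,y) - d(x,y)\bigr) \le \tfrac{1}{2}\bigl((d(q,x) + H) + (d(q,y) + H) - d(x,y)\bigr) = H.
\]
So taking $R = H$ works. There is no serious obstacle here; the only mild subtlety is to be clear that ``reparameterized $(K,c)$-quasigeodesic'' means a path whose image is the image of a $(K,c)$-quasigeodesic, so that the Morse lemma applies even though $\gamma$ itself need not be a quasigeodesic parameterization.
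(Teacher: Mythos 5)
Your proof is correct. The paper explicitly omits the proofs of Propositions \ref{prop:npp diam}, \ref{prop:gromov product} and \ref{prop:qg gromov}, describing them as ``elementary exercises in coarse geometry,'' so there is no written argument to compare against; but your Morse-lemma-plus-triangle-inequality argument is exactly the expected one, and you correctly flagged the only subtlety (that reparameterization preserves the image, so Morse stability still applies to the restriction of $\gamma$ to $[t_1,t_3]$).
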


\begin{proposition}\label{prop:bi-infinite}
Given constants $\delta$ and $Q$, there are constants $K, c, L_0$ and
$M_0$, such that for any collection $\calY$ of $Q$-quasi-convex sets,
for any $L \ge L_0$ and $M \ge M_0$, and any bi-infinite collection
$\calZ$ of $(L, M )$-$\calY$-separated sets, then the image of any
$\calZ$-piecewise geodesic in $X_\calY$ is a bi-infinite
reparameterized $(K, c)$-quasi-geodesic.
\end{proposition}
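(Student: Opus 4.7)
The plan is to combine Proposition \ref{prop:well-separated}, which shows a $\calZ$-piecewise geodesic is already a genuine quasigeodesic in $X$, with Theorem \ref{theorem:quotient}, which turns $X$-quasigeodesics into reparameterized quasigeodesics in $X_\calY$. The only content beyond this that needs the new hypothesis of $M$-separation in $X_\calY$ is verifying that the reparameterized quasigeodesic is bi-infinite, not merely bounded.

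In more detail, fix $L_0$ and constants $(K_0, c_0)$ from Proposition \ref{prop:well-separated}, which depend only on $\delta$ and $Q$. For any $L \ge L_0$, every finite sub-concatenation of a $\calZ$-piecewise geodesic $\gamma$ is a $(K_0,c_0)$-quasigeodesic in $X$. Applying Theorem \ref{theorem:quotient} to $(K_0,c_0)$-quasigeodesics produces constants $(K,c)$, depending only on $\delta$ and $Q$, such that the projection of each such finite sub-path is a reparameterized $(K,c)$-quasigeodesic in $X_\calY$. A routine diagonal/exhaustion argument, using nested finite sub-paths $[p_{-n},q_n]$, assembles these into a bi-infinite reparameterized $(K,c)$-quasigeodesic $\gamma' \from I \to X_\calY$ whose image agrees with $\pi \circ \gamma$.

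To see that the parameter interval $I$ is actually $\R$ and that the image is unbounded in both directions, I use the $M$-separation. For each $i \in \Z$, the points $p_i, q_i \in Z_i$ project into $\pi(Z_i)$, which has diameter at most $1$ in $X_\calY$. Let $t_i$ be the parameter value that $\gamma'$ assigns to $\pi(p_i)$. Since $d_{X_\calY}(\pi(Z_i),\pi(Z_{i+1})) \ge M$, we get
\[
d_{X_\calY}(\gamma'(t_i),\gamma'(t_{i+1})) \ge M - 2,
\]
and the $(K,c)$-quasigeodesic inequality forces $|t_{i+1}-t_i| \ge (M-2-c)/K$. Choosing $M_0$ so that $(M_0 - 2 - c)/K \ge 1$ makes each gap at least $1$, so $t_i \to \pm\infty$ as $i \to \pm\infty$ and hence $I = \R$. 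The lower quasigeodesic bound then gives $d_{X_\calY}(\gamma'(t_0),\gamma'(t_i)) \ge |t_i|/K - c \to \infty$, so the image is bi-infinite in $X_\calY$.

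The only place requiring care is the bookkeeping of constants: one needs $(K_0,c_0)$ to be uniform over all $L \ge L_0$ so that $(K,c)$ from Theorem \ref{theorem:quotient} is uniform as well, after which $M_0$ is chosen in terms of $K$ and $c$. The main potential obstacle is the proper interpretation of ``reparameterized quasigeodesic'' (a non-decreasing surjection onto an honest quasigeodesic), and in particular ensuring that successive visits of $\gamma$ to distinct $Z_i$ are not all collapsed to the same parameter value; this is exactly what the $M$-separation in $X_\calY$ rules out, so there is no genuine difficulty beyond constant chasing.
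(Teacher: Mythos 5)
Your proof is correct, and it is close in spirit to the paper's but differs in its final step. Both you and the paper begin with Proposition \ref{prop:well-separated} (to get that $\gamma$ is a genuine $(K_1,c_1)$-quasigeodesic in $X$) and Theorem \ref{theorem:quotient} (to project to a reparameterized quasigeodesic in $X_\calY$). From there the paper takes a detour: it invokes Proposition \ref{prop:qg gromov} to bound the Gromov products $\gp{\pi(p_j)}{\pi(p_i)}{\pi(p_k)}^{X_\calY}$, then applies the local-to-global criterion of Proposition \ref{prop:bounded gp} to the piecewise geodesic in $X_\calY$ whose vertices are the $\pi(p_i)$, with the $M$-separation supplying the lower bound on segment lengths; this produces an honest bi-infinite quasigeodesic in $X_\calY$ to which the image of $\gamma$ is Hausdorff-close. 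You instead argue directly about the reparameterization: the $M$-separation together with the upper quasigeodesic inequality forces the parameter gaps $|t_{i+1}-t_i|$ to be bounded away from zero once $M_0$ is large relative to $K$ and $c$, hence the parameter domain is all of $\R$ and the image is unbounded in both directions. This is a bit more economical and avoids the two auxiliary propositions, at the cost of not exhibiting the concrete piecewise geodesic in $X_\calY$ that the paper's version produces (a structure which is not needed for the statement). One small remark on your write-up: the diagonal/exhaustion step is unnecessary, since Theorem \ref{theorem:quotient} can be applied to the entire bi-infinite quasigeodesic at once; you may go directly to the analysis of the parameters $t_i$.
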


\begin{proof}
By Proposition \ref{prop:well-separated}, given constants $\delta_1$
and $Q$, there are constants $K_1, c_1$ and $L_1$ such that for any
collection $\calZ$ of $L$-well-separated $Q$-quasi-convex sets, with
$L \ge L_1$, any $\calZ$-piecewise geodesic $\gamma$ is a
$(K_1, c_1)$-quasi-geodesic in $X$.  By Theorem \ref{theorem:quotient}
there are constants $\delta_2, K_2$ and $c_2$ such that the image of
$\gamma$ in $X_\calY$ is a reparameterized $(K_2, c_2)$-quasi-geodesic
in the $\delta_2$-hyperbolic space $X_\calY$.

By Proposition \ref{prop:qg gromov}, given constants $\delta_2, K_2$
and $c_2$, there is a constant $R$, such that for any $i \le j \le k$,
there is a bound on their Gromov product in $X_\calY$: that is,
$\gp{\pi(p_j)}{\pi(p_i)}{\pi(p_k)}^{X_\calY} \le R$.  Therefore, by
Proposition \ref{prop:bounded gp}, given constants $\delta_2$ and $R$,
there are constants $K_2, c_2$ and $M$ such that as long as
$d_{X_\calY}(\pi(p_i), \pi(p_{i+1})) \ge M$, the piecewise geodesic in
$X_\calY$ formed from geodesic segments $[\pi(p_{i}), \pi(p_{i+1})]$
is a $(K_2, c_2)$-quasi-geodesic, and in particular is bi-infinite.  So
for any collection $\calZ$ of $(L, M)$-$\calY$-separated sets, the
image of any $\calZ$-piecewise geodesic in $X_\calY$ is a
reparameterized bi-infinite $(K_2, c_2)$-quasi-geodesic.
\end{proof}

We may now complete the proof of Lemma \ref{lemma:coarse}.

\begin{proof}[Proof (of Lemma \ref{lemma:coarse}).]
Let $\delta$ be the constant of hyperbolicity for $X$, and let $Q$ be
a constant such that all sets $Y \in \calY$ are $Q$-quasiconvex.  Let
$f$ be an element of $G$ which acts loxodromically on $Y \in \calY$.
The isometry $f$ acts elliptically on $X_\calY$, coarsely fixing $Y$.
In particular, there is a constant $A$, depending only on $\delta$ and
$Q$, such that $d_{X_\calY}(Y, f^\ell Y) \le A$ for all $\ell \in \Z$.

Given $\delta$ and $Q$, let $R_0$ be the constant from Proposition
\ref{prop:npp diam}, and choose $R \ge R_0 + A$.  In particular, for
any two sets $Y$ and $Y'$ in $\calY$, distance at least $R$ apart in
$X$, the nearest point projection of $Y'$ to $Y$ has diameter at most
$R$, and if $Y''$ is another set in $\calY$, distance at least $R$
from $Y$ and at most $R$ from $Y'$, then the nearest point projections
of $Y'$ and $Y''$ to $Y$ are distance at most $R$ apart.

Consider a pair of numbers $L$ and $M$ with $M \ge L \ge R$.  The
group $G$ acts with unbounded orbits on $X_\calY$, so for any such
number $M$, there is a group element $g \in G$ such that
$d_{X_\calY}(Y, g Y) \ge M + A$.  This implies that for all $\ell$,
$d_X(Y, g Y) \ge M + A$, and equivalently, for all $\ell$,
$ d_X(Y, g^{-1} Y) \ge M + A$.  As
$d_{X_\calY}(Y, f^\ell g Y) = d_{X_\calY}(f^{- \ell}Y, g Y)$ this
implies that for all $\ell$, $d_{X_\calY}(Y, f^\ell g Y) \ge M$, and
equivalently, that for all $\ell$,
$d_{X_\calY}(Y, g^{-1} f^{-\ell} Y) \ge M$.  As the map from $X$ to
$X_\calY$ is $1$-Lipschitz, this implies that
$d_{X}(Y, f^\ell g Y) \ge M$, and equivalently
$d_{X}(Y, g^{-1} f^{-\ell} Y) \ge M$.

As we have chosen $M \ge R$, the nearest point projection of
$f^\ell g Y$ to $Y$ has diameter at most $R$.  Similarly, the nearest
point projection of $g^{-1} f^{-\ell} Y$ to $Y$ has diameter at most
$R$.  Furthermore, for all $\ell$ and $\ell'$,
$d_X( g^{-1} f^{-\ell} Y, g^{-1} f^{-\ell'} Y ) \le A$, and so for all
$\ell$ and $\ell'$, the nearest point projections of
$g^{-1} f^{-\ell} Y$ and $g^{-1} f^{-\ell'} Y$ to $Y$ are distance at
most $R$ apart.  Therefore, as $f$ acts loxodromically on $Y$, for any
number $L \ge R$ there is a sufficiently large number $\ell$ such that
$\pi_Y(g^{-1} f^{-\ell} Y)$ and $\pi_Y(f^\ell g Y)$ are distance at
least $L$ apart.  This is illustrated schematically in Figure
\ref{pic:npp Y}.

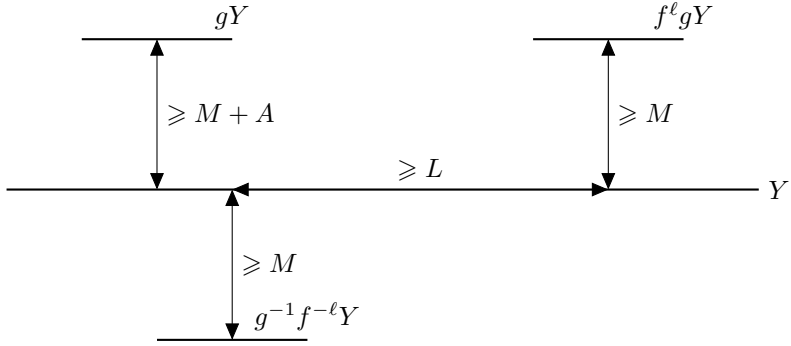
\begin{figure}[H]
\begin{center}
\begin{tikzpicture}

\draw [thick] (0, 0) -- (10, 0) node [right] {$Y$};

\draw [triangle 45-triangle 45] (2, 0) -- node [midway, right]
{$\ge M + A$} (2, 2);
\draw [thick] (1, 2) -- (3, 2) node [above] {$g Y$};

\draw [triangle 45-triangle 45] (3, 0) -- node [midway, right] {$\ge M$} (3, -2);
\draw [thick] (2, -2) -- (4, -2) node [above] {$g^{-1} f^{-\ell} Y$};

\draw [triangle 45-triangle 45] (8, 0) -- node [midway, right] {$\ge M$} (8, 2);
\draw [thick] (7, 2) -- (9, 2) node [above]
{$f^\ell g Y$};

\draw [triangle 45-triangle 45] (3, 0) -- node [midway, above] {$\ge
  L$} (8, 0);

\end{tikzpicture}
\end{center}
\caption{Nearest point projections of $g^{-1} Y$ and $f^\ell g Y$ to
  $Y$ in $X$.}
\label{pic:npp Y}
\end{figure}

Set $h = f^\ell g$.  Then by Proposition \ref{prop:bi-infinite}, for
$L$ and $M$ sufficiently large, the set
$\calZ = \{ h^n Y\}_{n \in \Z}$ is a bi-infinite collection of
$(L, M)$-$\calY$-separated sets, with the property that any
$\calZ$-piecewise geodesic quasi-axis for $h$ projects to a
bi-infinite quasi-axis for $h$ in $X_\calY$, and so $h$ acts
loxodromically on $X_\calY$, as required.
\end{proof}

The following corollary implies that every subgroup in the Johnson
filtration contains an element which acts loxodromically on $\calH$.

\begin{corollary}\label{corollary:johnson}
Let $X$ be a $\delta$-hyperbolic space, and let $\calY$ be a
collection of uniformly quasi-convex sets, such that the
electrification $X_\calY$ has infinite diameter.  Let $G$ be a group
which acts on $X$ by isometries, and which contains an element which
acts loxodromically on $X_\calY$.  Let $H$ be a normal subgroup of
$G$, which contains an element which acts loxodromically on $X$.  Then
$H$ contains an element which acts loxodromic on $X_\calY$.
\end{corollary}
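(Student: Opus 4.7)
The plan is to reduce the corollary to Lemma \ref{lemma:coarse} applied with the normal subgroup $H$ in the role of $G$. The two non-trivial hypotheses to verify are that $H$ acts with unbounded orbits on $X_\calY$, and that there is some $Y \in \calY$ on which $H$ acts loxodromically. Both will come from the interplay between the given $h \in H$ (loxodromic on $X$) and $g \in G$ (loxodromic on $X_\calY$), together with the normality of $H$.

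First I would analyse $h$ via a dichotomy. By Theorem \ref{theorem:quotient}, the quasi-axis $A_h$ of $h$ in $X$ projects to a reparameterized quasigeodesic in $X_\calY$. If this projection is unbounded, then it is a bi-infinite quasigeodesic along which $h$ translates with positive speed, so $h$ itself is loxodromic on $X_\calY$ and the corollary follows immediately since $h \in H$. Otherwise the projection of $A_h$ is bounded, and I would argue that $A_h$ is contained in a uniform neighbourhood $N_k(Y)$ of a single $Y \in \calY$: using uniform quasi-convexity of the sets in $\calY$ and the bounded nearest-point-projection property (Proposition \ref{prop:npp diam}), any two $\calY$-sets whose cone points are far apart in $X_\calY$ have bounded coarse intersection in $X$, so the bi-infinite quasigeodesic $A_h$ cannot be spread across such pairs while keeping its $X_\calY$-image bounded. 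Since $A_h$ is $h$-invariant and contained in both $N_k(Y)$ and $N_k(hY)$, the same reasoning forces $hY = Y$ (after possibly replacing $h$ by a power, to absorb a finite permutation of the $Y_i$ that all coarsely contain $A_h$); hence $h$ acts loxodromically on $Y$ in the sense of the definition preceding Lemma \ref{lemma:coarse}.

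For the unbounded-orbits hypothesis, I would use normality of $H$: each conjugate $g^n h g^{-n}$ lies in $H$ and has axis $g^n A_h \subset N_k(g^n Y)$ with $g^n Y \in \calY$ (using the $G$-invariance of $\calY$ implicit in the action of $G$ on $X_\calY$), so $g^n h g^{-n}$ acts loxodromically on $g^n Y$. Because $g$ is loxodromic on $X_\calY$, the cone points $\pi_\calY(g^n Y)$ leave every bounded set as $n \to \infty$. Choosing $n$ and $\ell$ sufficiently large and setting $w = h^\ell \cdot (g^n h g^{-n}) \in H$, the iterates $\{ w^k Y\}_{k \in \Z}$ form a bi-infinite $(L, M)$-$\calY$-separated collection in the sense of Proposition \ref{prop:bi-infinite}; that proposition then shows a piecewise-geodesic $w$-invariant axis projects to a bi-infinite quasigeodesic in $X_\calY$, so $w \in H$ acts loxodromically on $X_\calY$ and the corollary is proved. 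The principal obstacle is the single-$Y$ reduction in the second paragraph, which is the abstract analogue of Theorem \ref{Thm:Ray} and must here be derived from uniform quasi-convexity together with Proposition \ref{prop:npp diam}, rather than from the specific stability result for the curve complex; the subsequent assembly of $w$ and verification of the separation constants is parallel to the proof of Lemma \ref{lemma:coarse}.
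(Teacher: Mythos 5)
Your reduction to Lemma \ref{lemma:coarse} hits a genuine gap at the single-$Y$ step. If the quasi-axis $A_h$ of $h$ has bounded image in $X_\calY$, the most you can conclude abstractly is that all the $\calY$-sets coarsely covering finite segments of $A_h$ have cone points that are close to one another in $X_\calY$ (this is what your observation via Proposition \ref{prop:npp diam} gives you: two $\calY$-sets with distant cone points have bounded coarse intersection in $X$). That does not force a single $Y \in \calY$ to coarsely contain all of $A_h$: infinitely many distinct $\calY$-sets can have mutually close cone points, and the sets $Y_r$ coarsely covering longer and longer segments of $A_h$ can genuinely vary with $r$. In the compression body setting this is exactly what the stability result, Theorem \ref{Thm:Stability}, delivers, and its proof uses surgery sequences, train tracks and the Masur--Minsky distance formula, none of which are available for an abstract collection of uniformly quasiconvex sets. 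The subsequent claim that $hY = Y$ after passing to a power has the same problem: nothing guarantees that only finitely many $\calY$-sets coarsely contain $A_h$.

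The paper's proof sidesteps the issue by never attempting to place $A_h$ near a member of $\calY$. The definition of an $(L,M)$-$\calY$-separated collection $\calZ = \{Z_i\}$ only requires the $Z_i$ to be uniformly quasiconvex subsets of $X$, not members of $\calY$; the paper takes $Z_n = w^n \gamma_h$, where $\gamma_h$ is the quasi-axis of $h$ and $w = g^l h^m g^{-l} h^m$, which lies in $H$ by normality. Independence of $g$ and $h$ acting on $X$ gives $L$-well-separation in $X$, and the loxodromic action of $g$ on $X_\calY$ gives $M$-$\calY$-separation; Proposition \ref{prop:bi-infinite} then shows that $w$ is loxodromic on $X_\calY$. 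Your assembly of $w$ from conjugates of $h$ is essentially the same idea and would go through once you replace the sought-after $Y \in \calY$ by $\gamma_h$ itself and bypass Lemma \ref{lemma:coarse} in favour of Proposition \ref{prop:bi-infinite} directly.
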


We will use the following property of independent loxodromics.

\begin{proposition}\label{prop:schottky}
Let $g$ and $h$ be independent loxodromics on a $\delta$-hyperbolic
space $X$.  Then there is a constant $m$, such that $g^m$ and $h^m$
freely generate a free group, all of whose non-trivial elements act
loxodromically on $X$.  Furthermore, the orbit map applied to this
free group is a quasi-isometric embedding.
\end{proposition}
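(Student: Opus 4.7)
The plan is to prove Proposition \ref{prop:schottky} by a ping-pong / Schottky argument, assembling the piecewise geodesic technology already developed in Propositions \ref{prop:bounded gp} and \ref{prop:qg gromov}. The main inputs will be that independent loxodromics have disjoint fixed point pairs in $\partial X$, and that a piecewise geodesic with uniformly bounded Gromov products and sufficiently long segments is a global quasigeodesic.

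First I would fix a basepoint $x_0 \in X$ and choose quasi-axes $A_g$ and $A_f$ (where I write $f = h$ for clarity) with endpoints the fixed points of $g$ and $h$ in $\partial X$. Because $g$ and $h$ are independent, these four boundary points are distinct, so the axes $A_g$ and $A_h$ have bounded coarse intersection: there is a constant $R_0$, depending on $\delta$ and the quasi-axis constants, such that for any $k \ne 0$ and $j \ne 0$ the Gromov products $\gp{x_0}{g^k x_0}{h^j x_0}$ are bounded by $R_0$. This is the quantitative form of "the fixed points don't coincide," and it is where the independence hypothesis is essentially used. Symmetric statements hold for the other three pairs of signs.

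Next I would form the ping-pong paths. Set $S = \{ g^{\pm m}, h^{\pm m}\}$ for some large integer $m$ to be chosen. For any reduced word $w = s_1 s_2 \cdots s_n$ in the free group $F(g^m, h^m)$, consider the orbit sequence $x_0, s_1 x_0, s_1 s_2 x_0, \ldots, w x_0$, and join consecutive points by geodesic segments to form a piecewise geodesic $\gamma_w$. Because $g$ is loxodromic with positive translation length, for any $L$ we may choose $m$ large enough that $d_X(x_0, s x_0) \ge L$ for every $s \in S$, so each segment of $\gamma_w$ has length at least $L$. The isometric action of $G$ translates the Gromov product at the $i$-th vertex of $\gamma_w$ to a quantity of the form $\gp{x_0}{s_i^{-1} x_0}{s_{i+1} x_0}$, and since $s_i^{-1} \ne s_{i+1}$ (reduced word) the pair $(s_i^{-1}, s_{i+1})$ is drawn from the finitely many ordered pairs of distinct elements of $S$, each of which yields a Gromov product bounded by some $R \ge R_0$ (again by independence). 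So $\gamma_w$ has $R$-bounded Gromov products.

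Finally, I would apply Proposition \ref{prop:bounded gp}: given $\delta$ and $R$, there exist $L, K, c$ so that any piecewise geodesic with $R$-bounded Gromov products and segments of length at least $L$ is a $(K,c)$-quasigeodesic. Choosing $m$ so that $L$ is achieved, every $\gamma_w$ is a $(K,c)$-quasigeodesic, uniformly in $w$. This immediately yields the conclusion: the endpoints satisfy $d_X(x_0, w x_0) \ge \tfrac{1}{K} \norm{w}_S \ell - c$ where $\ell = \min_{s \in S} d_X(x_0, s x_0)$, so the orbit map $F(g^m, h^m) \to X$ is a quasi-isometric embedding; in particular no nontrivial word fixes $x_0$ so the subgroup is free on $g^m, h^m$, and every nontrivial element has positive translation length, hence acts loxodromically.

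The main obstacle is establishing the uniform bound on Gromov products $\gp{x_0}{s^{-1} x_0}{s' x_0}$ for distinct $s, s' \in S$, independent of $m$. This reduces to the statement that the four boundary fixed points of $g$ and $h$ are distinct, together with the fact that $g^m x_0$ converges to $\lambda_g^+$ (respectively $\lambda_g^-$ for $g^{-m}$) as $m \to \infty$, so Gromov products between orbit points approaching distinct boundary points stay bounded as a consequence of hyperbolicity. Carrying this out carefully, with explicit constants tracked through Propositions \ref{prop:npp diam}, \ref{prop:gromov product} and \ref{prop:qg gromov}, is the technical heart of the argument.
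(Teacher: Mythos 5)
The paper does not actually prove Proposition~\ref{prop:schottky}: the \texttt{proof} environment that immediately follows the statement is in fact the proof of Corollary~\ref{corollary:johnson} (it begins ``Suppose that $g \in G$ acts loxodromically on $X_\calY$\dots'' and invokes Proposition~\ref{prop:bi-infinite} and the $\calY$-separated machinery). The authors evidently regard the proposition as a standard fact about hyperbolic spaces and omit its proof, so there is no paper argument to compare against.

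Your proposed proof is correct. It is the standard ping-pong argument via Gromov products, packaged using the local-to-global principle (Proposition~\ref{prop:bounded gp}): you observe that for a reduced word $w = s_1 \cdots s_n$ in $S = \{g^{\pm m}, h^{\pm m}\}$, the Gromov product of the broken geodesic at the $i$-th vertex translates to $\gp{x_0}{s_i^{-1}x_0}{s_{i+1}x_0}$ with $s_i^{-1} \neq s_{i+1}$, and that these quantities are bounded uniformly in $m$ because the four boundary fixed points of $g$ and $h$ are pairwise distinct --- for the mixed pairs $(g^{\pm m}, h^{\pm m})$ this is exactly the independence hypothesis, and for the same-letter pairs $(g^m, g^{-m})$ and $(h^m, h^{-m})$ it follows from loxodromicity. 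Taking $m$ large makes each segment long, Proposition~\ref{prop:bounded gp} gives a uniform $(K,c)$, and the quasi-isometric embedding, freeness, and loxodromicity of nontrivial (cyclically reduced) elements all follow. This is an arguably more economical route than the paper's general machinery of $L$-well-separated quasiconvex translates of a quasi-axis (Propositions~\ref{prop:well-separated}--\ref{prop:bi-infinite}), which could also be used to prove this statement but is developed in the paper mainly because it carries over to the electrified space $X_\calY$, which your argument does not need here. One small point worth making explicit when you write this up: the translation-length estimate for a nontrivial $u \in F(g^m,h^m)$ should be run on a cyclically reduced conjugate of $u$ so that the words $u^n$ stay reduced; translation length is a conjugacy invariant so this costs nothing.
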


\begin{proof}[Proof (of Corollary \ref{corollary:johnson})]
Suppose that $g \in G$ acts loxodromically on $X_\calY$, and $h \in H$
acts loxodromically on $X$.  If $h$ acts loxodromically on $X_\calY$,
then we are done.  If not, then $g$ and $h$ are independent as
loxodromics acting on $X$.  We may choose $\{ h^n x_0 \}_{n \in \Z}$
as a quasi-axis $\gamma_h$ for $h$.  Note that the axis is
quasi-convex in $X$.

Given constants $L$ and $M$, we will show that there are positive
integers $l$ and $m$, such that if $f = g^l h^m g^{-l} h^m$, the
ordered collection of sets
$\calZ = \{ Z_n = f^n \gamma_h \}_{n \in \N}$ is a bi-infinite
collection of uniformly quasi-convex $(L, M)$-$\calY$-separated sets
in $X$.  As $\calZ$ consists of translates of a single quasi-convex
set, the $Z_i$ are uniformly quasi-convex.

For $l$ sufficiently large, the nearest point projection of
$f \gamma_h = g^l h^m g^{-l} \gamma_h$ to $\gamma_h$ lies in a bounded
neighbourhood of some vertex of $\gamma_h$, say $x_0$.  Similarly, for
$l$ and $m$ sufficiently large, the nearest point projection of
$f^{-1} \gamma_h$ lies in a bounded neighbourhood of $h^{-m} x_0$.  By
choosing $m$ sufficiently large, this implies that
\[ d_X( \pi_{\gamma_h} ( f \gamma_h ), \pi_{\gamma_h}( f^{-1}
\gamma_h) ) \ge L. \]
Then as $\calZ$ is $f$-invariant, it is $L$-well-separated in $X$.

As $g$ and $h$ are independent, the nearest point projection of
$\gamma_h$ to $\gamma_g$ has bounded diameter in $X$, so the nearest
point projection of $\pi(\gamma_h)$ to $\pi(\gamma_g)$ also has
bounded diameter in $X_\calY$.  As $g$ acts loxodromically on
$X_\calY$, for any constant $M$ there is a positive integer $l$ such
that $d_{X_\calY}(\gamma_h, f \gamma_h) \ge M$, and again as $\calZ$
is $f$-invariant, this implies that $\calZ$ is $M$-$\calY$-separated.
Proposition \ref{prop:bi-infinite} then implies that $f$ acts
loxodromically on $X_\calY$.
\end{proof}

We now show that if two pseudo-Anosov elements $f$ and $g$ act
loxodromically on the compression body graph $\calH(S)$, and act
independently on $\C(S)$, then they also act independently on
$\calH(S)$.

\begin{proposition}
Let $X$ be a $\delta$-hyperbolic space, and let $\calY$ be a
collection of uniformly quasi-convex subsets of $X$.  Let $f$ and $g$
act loxodromically on $X_\calY$, and let them be independent as
loxodromics acting on $X$.  Then they are independent loxodromics
acting on $X_\calY$.
\end{proposition}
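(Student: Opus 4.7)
The plan is a proof by contradiction. Suppose $f$ and $g$ share a fixed point in $\partial X_\calY$; after possibly replacing $g$ by $g^{-1}$ we may assume the attracting fixed points coincide, $\mu_f^+ = \mu_g^+$. Let $\gamma_f, \gamma_g \from \R \to X$ be quasi-axes for $f$ and $g$ through a basepoint $x_0$; these are $(K,c)$-quasigeodesics in $X$ converging to $\lambda_f^\pm$ and $\lambda_g^\pm$ in $\partial X$. By Theorem \ref{theorem:quotient}, their $\pi_\calY$-images are reparametrized $(K',c')$-quasigeodesics in $X_\calY$, and because $f$ and $g$ act loxodromically on $X_\calY$ these images are in fact bi-infinite quasi-axes in $X_\calY$ with endpoints $\mu_f^\pm$ and $\mu_g^\pm$ respectively.

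Because $X_\calY$ is Gromov hyperbolic, two quasigeodesic rays sharing a common endpoint at infinity have bounded Hausdorff distance on their cofinal tails. Applied to the positive rays of $\pi_\calY(\gamma_f)$ and $\pi_\calY(\gamma_g)$, this gives a constant $C_1$ and sequences $s_n, t_n \to \infty$ with
\[
d_{X_\calY}\!\bigl(\pi_\calY(\gamma_f(s_n)),\, \pi_\calY(\gamma_g(t_n))\bigr) \le C_1.
\]
Conversely, independence on $X$ yields $\lambda_f^+ \ne \lambda_g^+$, so the Gromov products $\gp{x_0}{\gamma_f(s_n)}{\gamma_g(t_n)}$ measured in $X$ are uniformly bounded. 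Thus, in the $\delta$-hyperbolic space $X$, any geodesic $\sigma_n$ from $\gamma_f(s_n)$ to $\gamma_g(t_n)$ passes within a uniform distance $D$ of $x_0$.

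Now project $\sigma_n$ to $X_\calY$. By Theorem \ref{theorem:quotient} again, the image is a reparametrized $(K',c')$-quasigeodesic in $X_\calY$ between $\pi_\calY(\gamma_f(s_n))$ and $\pi_\calY(\gamma_g(t_n))$, two points at distance at most $C_1$. Morse stability in the hyperbolic space $X_\calY$ forces the entire image to stay within a constant $C_2$ of any $X_\calY$-geodesic joining its endpoints, hence within $C_1 + C_2$ of $\pi_\calY(\gamma_f(s_n))$. Since the inclusion $\pi_\calY$ is $1$-Lipschitz and $\sigma_n$ passes within $D$ of $x_0$, the image of $\sigma_n$ contains a point within $D$ of $\pi_\calY(x_0)$, so
\[
d_{X_\calY}\!\bigl(\pi_\calY(x_0),\, \pi_\calY(\gamma_f(s_n))\bigr) \le C_1 + C_2 + D
\]
uniformly in $n$. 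But $f$ acts loxodromically on $X_\calY$ and $s_n \to \infty$, so this distance must tend to infinity, a contradiction. The main obstacle is keeping the two distinct applications of Theorem \ref{theorem:quotient} cleanly separated (one to the quasi-axes, one to the connecting geodesics $\sigma_n$), and then combining them with Morse stability in $X_\calY$; the geometric heart of the argument is that independent endpoints in $\partial X$ force the connecting geodesics in $X$ to fall near $x_0$, which then electrifies to a bound the loxodromic action in $X_\calY$ cannot satisfy.
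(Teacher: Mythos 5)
Your proof is correct, and it takes a genuinely different route from the paper's. The paper argues directly: take an $X$-geodesic $\gamma$ from $\lambda_f^+$ to $\lambda_g^+$, project it, and observe that the image is a reparameterized quasigeodesic that is unbounded in both directions (each ray of $\gamma$ asymptotically fellow-travels a loxodromic quasi-axis whose $X_\calY$-image is unbounded); a bi-infinite quasigeodesic in a hyperbolic space has two distinct ideal endpoints, and these are $\mu_f^+$ and $\mu_g^+$. You instead argue by contradiction: assuming $\mu_f^+=\mu_g^+$, you produce far-out pairs $\gamma_f(s_n)$, $\gamma_g(t_n)$ whose $X_\calY$-images are uniformly close, use bounded Gromov products in $X$ (from $\lambda_f^+\ne\lambda_g^+$) to force the connecting $X$-geodesics $\sigma_n$ through a neighbourhood of $x_0$, and then apply Morse stability in $X_\calY$ to trap $\pi(\gamma_f(s_n))$ within bounded distance of $\pi(x_0)$, contradicting loxodromicity of $f$. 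The underlying geometric fact is the same in both -- a path in $X$ joining points far along the two axes must pass near $x_0$, and its projection controls the ideal endpoints in $X_\calY$ -- but your version spells out the Morse-stability step and the Gromov-product estimate that the paper elides; the paper's one-line construction is slicker but relies on the reader supplying the fellow-travelling justification for why $\pi\circ\gamma$ is bi-infinite. One small imprecision: replacing only $g$ by $g^{-1}$ does not cover the case in which the shared fixed point is $\mu_f^-$; allow replacing $f$ by $f^{-1}$ as well (or equivalently run the argument with the appropriate half-rays of the axes), which changes nothing downstream.
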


\begin{proof}
Let $\lambda^\pm_f$ be the fixed points of $f$ in $\partial X$, and
let $\lambda^\pm_g$ be the fixed points of $g$ in $\partial X$. As $f$
and $g$ act independently on $X$, all of these points are distinct.
Let $\gamma$ be a geodesic from $\lambda^+_f$ to $\lambda^+_g$. The
image of $\gamma$ under projection to $X_\calY$ is a reparameterized
geodesic connecting the fixed points of $f$ and $g$ in
$\partial X_\calY$ with at least one point in the interior of
$X_\calY$.  Thus $\pi \circ \gamma$ is a bi-infinite geodesic
connecting the fixed points, so they are distinct.
\end{proof}

%%%%%%%%%%%%%%%%%%%%%%%%%%%%%%%%%%%%%%%%%%%%%%%%%%%%%%%%%%%%%%%%%%%%%%%%%%%%%%
\section{Applications} \label{section:applications}
%%%%%%%%%%%%%%%%%%%%%%%%%%%%%%%%%%%%%%%%%%%%%%%%%%%%%%%%%%%%%%%%%%%%%%%%%%%%%%

As an application of our methods, we give an alternative proof of a
result of Biringer, Johnson and Minsky \cite{bjm}*{Theorem 1.1}, and a
slightly stronger version of a result of Lubotzky, Maher and Wu
\cite{lmw}*{Theorem 2}, using results of Maher and Tiozzo
\cite{maher-tiozzo}.

\begin{theorem}\label{T:bjm}
Let $g$ be a pseudo-Anosov element of the mapping class group of a
closed orientable surface. Then the stable lamination $\lambda^+_g$ of
$g$ is contained in the limit set for a compression body $V$ if and
only if the unstable lamination $\lambda^-_g$ is contained in the
limit set for $V$, and furthermore, there is a compression body
$V' \subseteq V$ such that some power of $g$ extends over $V'$.
\end{theorem}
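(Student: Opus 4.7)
The reverse implication is routine: if some power $g^k$ extends over a nontrivial $V' \subseteq V$, then $g^k$ preserves $\calD(V')$, so its fixed laminations $\lambda^\pm_g$ lie in $\overline{\calD}(V') \subseteq \overline{\calD}(V)$. For the forward direction, assume $\lambda^+_g \in \overline{\calD}(V)$, and let $\gamma \from \R \to \C(S)$ be a bi-infinite quasi-axis for $g$ with $\gamma(\pm\infty) = \lambda^\pm_g$. By Kobayashi's Lemma together with the quasi-convexity of $\calD(V)$ (as in the proof of Theorem~\ref{Thm:InfDiam}), the forward ray $\gamma^+ = \gamma|[0,\infty)$ lies in a uniform $k$-neighborhood of $\calD(V)$. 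Since $g$ acts isometrically on $\C(S)$ preserving $\gamma$, translating by $g^{-n}$ yields $g^{-n}\gamma^+ \subseteq N_k(\calD(g^{-n}V))$; this subray of $\gamma$ is based at $g^{-n}\gamma(0)$, which tracks back along $\gamma^-$ at rate $\tau_g$, so the initial segment of the backward ray $\gamma^- = \gamma|(-\infty,0]$ of length approximately $n\tau_g$ lies in $N_k(\calD(g^{-n}V))$.

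Apply Theorem~\ref{Thm:Stability} to $\gamma^-$ with the sequence $\{g^{-n}V\}_{n \ge 1}$: we obtain a nontrivial compression body $W$ and a constant $k'$ such that $\gamma^- \subseteq N_{k'}(\calD(W))$ and $W \subseteq g^{-n_i}V$ for infinitely many $n_i \ge 1$, equivalently $g^{n_i}W \subseteq V$. Then $\lambda^-_g = \gamma^-(+\infty) \in \overline{\calD}(W) \subseteq g^{-n_i}\overline{\calD}(V)$, and applying $g^{n_i}$ (which fixes $\lambda^-_g$) gives $\lambda^-_g \in \overline{\calD}(V)$. The implication $\lambda^-_g \in \overline{\calD}(V) \Rightarrow \lambda^+_g \in \overline{\calD}(V)$ is symmetric, by applying the argument to $g^{-1}$, so both laminations lie in $\overline{\calD}(V)$ whenever one does.

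To produce $V' \subseteq V$ over which a power of $g$ extends, I would iterate the construction above. Set $V^{(0)} := V$; inductively, given $V^{(m)} \subseteq V$ with $\gamma$ in a uniform bounded neighborhood of $\calD(V^{(m)})$ (the base case following from the biconditional just established), apply Theorem~\ref{Thm:Stability} to each ray of $\gamma$ against the $g$-translates $\{g^{-n}V^{(m)}\}_{n \ge 1}$ to produce a nontrivial compression body $W^{(m+1)}$ with $\gamma \subseteq N(\calD(W^{(m+1)}))$ and $g^{n_i}W^{(m+1)} \subseteq V^{(m)}$ for infinitely many $n_i$; set $V^{(m+1)} := g^{n_1}W^{(m+1)} \subseteq V^{(m)}$, noting that $g$-invariance of $\gamma$ gives $\gamma \subseteq N(\calD(V^{(m+1)}))$, sustaining the induction. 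Since descending chains of compression bodies stabilize (as already used in the proof of Theorem~\ref{Thm:Stability}), the chain $V^{(0)} \supseteq V^{(1)} \supseteq \cdots$ eventually satisfies $V^{(m+1)} = V^{(m)}$; then from $V^{(m)} = g^{n_1}W^{(m+1)}$ and $g^{n_i}W^{(m+1)} \subseteq V^{(m)}$ we obtain $g^kV^{(m)} \subseteq V^{(m)}$ for $k := n_i - n_1 \ge 1$ with $n_i$ chosen large. Any strict containment would yield the infinite descending chain $V^{(m)} \supsetneq g^kV^{(m)} \supsetneq g^{2k}V^{(m)} \supsetneq \cdots$, again contradicting stabilization, so $g^kV^{(m)} = V^{(m)}$ and $g^k$ extends over $V' := V^{(m)} \subseteq V$. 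The main obstacle will be establishing the "bi-infinite" variant of Theorem~\ref{Thm:Stability} needed at each induction step, in which $\gamma \subseteq N(\calD(W^{(m+1)}))$ rather than just $\gamma^- \subseteq N(\calD(W^{(m+1)}))$; this should follow by applying the theorem separately to $\gamma^+$ and $\gamma^-$ along the same sequence of translates and amalgamating the resulting common sub-compression bodies.
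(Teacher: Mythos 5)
Your argument is essentially the same in spirit as the paper's --- translate $V$ by powers of $g$, apply the stability theorem along the axis, iterate, and use the fact that chains of compression bodies stabilize --- but you organize the iteration differently, and this is exactly where the gap you flag at the end arises. The paper \emph{alternates} between the two rays: having found $V_2 \subseteq g^{k_1}V_1$ with $\gamma^- \subseteq N(\calD(V_2))$ (so $\lambda^-_g \in \overline{\calD}(V_2)$), it next applies stability to $\gamma^+$ against the translates $\{g^i V_2\}$ to obtain $V_3 \subseteq g^{k_2}V_2$, and so on. Because each step only requires one ray in a bounded neighbourhood and produces control of the \emph{other} ray at the next stage, the bi-infinite variant of Theorem~\ref{Thm:Stability} that you identify as the ``main obstacle'' is never needed. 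The descending chain $V_{n+1} \subseteq g^{k_n}V_n$ then stabilizes in topological type, and once it does, the fact that $V_{n+1}$ lies in \emph{infinitely many} translates of $V_n$ upgrades containment to equality for two distinct powers, yielding $g^{k}V_n = V_n$ with $k \neq 0$ --- the same conclusion you reach.

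Concerning the gap itself: your proposed remedy --- apply stability separately to $\gamma^+$ and $\gamma^-$ and ``amalgamate'' $W^+$ and $W^-$ --- is not correct as stated, because the two applications a priori produce subsequences $S^+, S^- \subseteq \N$ that may be disjoint, so $W^+$ and $W^-$ need not sit inside a common translate $g^{-n}V^{(m)}$, and the amalgamated compression body is then undefined. This can be repaired by nesting the applications (apply stability to $\gamma^-$ first to get $W^-$ and the infinite set $S^-$, then apply stability to $\gamma^+$ using only the translates indexed by $S^-$); after that the amalgamation inside a common $g^{-n}V^{(m)}$ makes sense. Alternatively --- and more economically given what you have already proved --- you can sidestep the bi-infinite version entirely: a single application of stability to $\gamma^-$ gives $V^{(m+1)} = g^{n_1}W^{(m+1)} \subseteq V^{(m)}$ with $\lambda^-_g \in \overline{\calD}(V^{(m+1)})$; your own biconditional (which holds for \emph{any} compression body, not just the initial $V$) then gives $\lambda^+_g \in \overline{\calD}(V^{(m+1)})$; and Kobayashi's Lemma plus quasi-convexity then put both rays, hence all of $\gamma$, in a bounded neighbourhood of $\calD(V^{(m+1)})$, sustaining the induction. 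With either repair your proof goes through; but as written, the last paragraph leans on an amalgamation step that does not yet work.
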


\begin{proof}
Suppose the stable lamination $\lambda^+_g$ for the pseudo-Anosov
element $g$ lies in the disc set of a compression body $V_1$.  Let
$\lambda^-_g$ be the unstable lamination for $g$, and let $\gamma$ be
a geodesic axis for $g$ in $\mathcal{C}(S)$.  Choose a basepoint
$\gamma(0)$ on $\gamma$, and assume that $\gamma$ is parameterized by
distance from $\gamma_0$ in $\mathcal{C}(S)$, with
$\lim_{n \to \infty} \gamma(n) = \lambda^+_g$.  Consider the sequence
of compression bodies $W_i = g^{-i} V_1$.  Possibly after re-indexing
the $W_i$, we may assume that the geodesic from $\gamma(0)$ to
$\gamma(-i)$ is contained in $k$-neighbourhood of $\mathcal{D}(W_i)$,
where $k$ depends only on the constant of hyperbolicity and the
quasi-convexity constants for the disc sets.  We may therefore apply
Theorem \ref{Thm:Stability} above, which then implies that there is a
compression body $V_2$, contained in infinitely many of the $W_i$.
Furthermore, the geodesic ray from $\gamma(0)$ to $\lambda^-(g)$ is
contained in a bounded neighbourhood of $\mathcal{D}(V_2)$.  In
particular, $V_2 \subseteq g^{k_1} V_1$ for some $n_1$, and we may
assume that $k_1 \not = 0$ as $V_2$ is contained in infinitely many
$g^i V_1$.

We may then repeat this procedure using positive powers of $g$, to
produce a compression body $V_3$, contained in infinitely many
$\mathcal{D}( g^i V_2 )$, such that $\lambda_g^+$ is contained in the
limit set of $\mathcal{D}(V_3)$.  Iterating this procedure produces a
descending chain of compression bodies
$V_{n+1} \subseteq g^{k_n} V_{n}$, which has the property that the
$V_i$ have limit sets containing $\lambda^+_g$ if $i$ is odd, and
$\lambda^-_g$ if $i$ is even.  As before, we may assume that the
integers $k_n$ are not zero.  A descending chain of compression bodies
must eventually stabilize with $V_n = g^{k_n} V_{n+1}$ for some $n$
and $k_n \not = 0$.  Therefore, there is a single compression body
$V = V_n$, which contains both $\lambda^+_g$ and $\lambda_g^-$, as
required.  Finally, we observe that as $k_n \not = 0$, some power of
$g$ extends over this compression body.
\end{proof}

Lubotzky, Maher and Wu showed that a random walk on the mapping class
group gives a hyperbolic manifold with a probability that tends to one
exponentially quickly, assuming that the probability distribution
$\mu$ generating the random walk is \emph{complete}: that is, the
limit set of the subgroup generated by the support of $\mu$ is dense
in Thurston's boundary for the mapping class group, $\mathcal{PML}$.
As the compression body graph is an infinite diameter Gromov
hyperbolic space, we may apply the results of Maher and Tiozzo
\cite{maher-tiozzo}, to replace this hypothesis with the assumption
that the support of $\mu$ contains a pair of independent loxodromic
elements for the action of the subgroup on the compression body graph.
We shall write $w_n$ for a random walk of length $n$ on the mapping
class group of a closed orientable surface of genus $g$, generated by
a probability distribution $\mu$, and $M(w_n)$ for the corresponding
\emph{random Heegaard splitting}: that is the $3$-manifold obtained by
using the resulting mapping class group element $w_n$ as the gluing
map for a Heegaard splitting.

\begin{proposition}
Every isometry of the compression body graph $\mathcal{H}$ is either
elliptic or loxodromic.
\end{proposition}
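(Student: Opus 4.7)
The plan is to identify the isometry group of $\calH$ with the mapping class group via Biringer-Vlamis (every isometry of a graph is simplicial, hence comes from $\text{Mod}(S)$), then to run through the Nielsen-Thurston classification and verify that each type of mapping class either has bounded orbits or acts with positive translation length on $\calH$, thereby ruling out parabolic behavior.

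Periodic and reducible mapping classes act with bounded orbits already on $\C(S)$, and since the coarse projection $\C(S) \to \calH$ is distance non-increasing, their orbits remain bounded in $\calH$ and they act elliptically. Concretely, for a reducible $f$ preserving a multicurve $C$, the compression body $V_C$ obtained from $S \times I$ by attaching $2$-handles along the components of $C$ (and capping off $2$-sphere components) is a fixed vertex of $\calH$, because $f$ permutes the attaching curves and so extends over $V_C$.

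The main case is a pseudo-Anosov $f$ with quasi-axis $\alpha \subset \C(S)$. By Theorem \ref{theorem:quotient}, $\pi_\calH(\alpha)$ is a reparameterized quasigeodesic in $\calH$, and I would split on whether it is bounded. If bounded, pick $x_0 \in \alpha$ and a geodesic ray $\gamma$ based at $x_0$ tracking $\alpha$ toward $\lambda^+_f$; its projection is also bounded, so Theorem \ref{Thm:Ray} puts $\gamma$ in a $k$-neighborhood of $\calD(V)$ for some compression body $V$. The Kobayashi superconvergence argument from the proof of Theorem \ref{Thm:InfDiam} then places $\lambda^+_f$ in the limit set of $\calD(V)$, and the theorem of Biringer, Johnson, and Minsky reproved earlier in this section produces a subcompression body $V' \subseteq V$ over which some power $f^n$ extends. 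Hence $f^n$ fixes the vertex $V' \in \calH$, the $\langle f \rangle$-orbit of $V'$ has size at most $|n|$, and $f$ is elliptic.

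If $\pi_\calH(\alpha)$ is unbounded, then isometry invariance, $d_\calH(x_0, f^n x_0) = d_\calH(x_0, f^{-n} x_0)$, forces it to be unbounded in both directions, giving a bi-infinite reparameterized quasigeodesic in the hyperbolic space $\calH$ along which $f$ acts by shift by the translation length $\tau_{\C(S)}(f) > 0$. The main obstacle is confirming that this shift induces \emph{positive} translation length on $\calH$ rather than parabolic drift: one must rule out the possibility that the reparameterization collapses long parameter intervals of $\alpha$ onto a bounded subset of $\calH$. This should follow from the Morse property for bi-infinite reparameterized quasigeodesics in $\delta$-hyperbolic spaces, which forces two distinct ideal endpoints in $\partial \calH$ and linear growth of $d_\calH(x_0, f^n x_0)$, so $f$ is loxodromic.
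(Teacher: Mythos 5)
Your proof is correct in substance, but it is organized differently from the paper's and has one minor imprecision worth flagging.

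\textbf{Comparison of approaches.} The paper's proof takes the dichotomy the other way around: it first disposes of the non-pseudo-Anosov cases and the case where a power extends over a compression body, then assumes $g$ is pseudo-Anosov with \emph{no} power extending over any compression body, and shows directly that the nearest-point projection of \emph{every} disc set to the axis $\gamma$ has uniformly bounded diameter (via Theorem \ref{Thm:Stability} applied to translated disc sets, exactly as in your bounded case but in contrapositive). Once this uniform bound is established, the projection of $\gamma$ to $\calH$ is a genuinely parameterized quasigeodesic (not merely a reparameterized one), and positive translation length is immediate. You instead split on whether $\pi_\calH(\alpha)$ is bounded, handling the bounded case by feeding the superconvergence argument of Theorem \ref{Thm:InfDiam} into the reproved theorem of Biringer, Johnson and Minsky, and the unbounded case by a hyperbolicity argument. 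Both routes are sound and rely on the same underlying engine (Theorem \ref{Thm:Stability}); yours has the advantage of recycling the BJM dichotomy explicitly, while the paper's keeps the argument self-contained at the level of nearest-point projections. Your concrete observation that a reducible $f$ fixes the vertex $V_C$ is also a nice, more explicit way to see ellipticity than the paper's appeal to bounded orbits in $\C(S)$.

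\textbf{The gap to fix.} In the unbounded case you write that the Morse property for reparameterized quasigeodesics ``forces two distinct ideal endpoints in $\partial\calH$ and linear growth of $d_\calH(x_0, f^n x_0)$.'' The first consequence is correct, but the second does not follow from the Morse property alone: a reparameterized quasigeodesic can spend arbitrarily long parameter intervals inside a bounded set, so unbounded image does not by itself give linear growth of the orbit. What saves the argument is not Morse but the classification of isometries of a Gromov hyperbolic space: once $\pi_\calH(\alpha)$ is unbounded in both directions it determines two distinct points $\xi^\pm\in\partial\calH$, and $f$ fixes each of them (it shifts along $\alpha$ in a fixed direction). An isometry with unbounded orbit cannot be parabolic if it fixes two distinct boundary points, so $f$ is loxodromic. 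Alternatively, one can mimic the paper and observe that if $\pi_\calH(\alpha)$ is unbounded then in fact the nearest-point projections of disc sets to $\alpha$ are uniformly bounded (otherwise your bounded-case argument applies to give a contradiction), which upgrades the reparameterized quasigeodesic to a parameterized one and yields linear growth directly. Either way the conclusion stands; just replace the appeal to Morse with one of these two arguments.
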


\begin{proof}
Biringer and Vlamis \cite{bv}*{Theorem 1.1} showed that the isometry
group of $\mathcal{H}$ is equal to the mapping class group.  If $g$ is
not pseudo-Anosov, then $g$ acts elliptically on the curve complex,
and hence acts elliptically on $\mathcal{H}$.
% If $g$ is a pseudo-Anosov element of the mapping class group for which
% some power extends over a compression body, then $g$ acts elliptically
% on $\mathcal{H}$.
Let $g$ be a pseudo-Anosov element of the mapping class group.
%for which no power extends over any compression body.
Let $\gamma$ be an axis for $g$ in the curve complex $\C(S)$, and let
$\gamma(0)$ be a choice of basepoint for $\gamma$.
% We now show that there is a constant $K$, depending on $g$, such that
% for any compression body $V$, the nearest point projection of the disc
% set $\mathcal{D}(V)$ to the axis $\gamma$ has diameter at most $K$.

Suppose that $g$ does not act loxodromically, it follows that there is
a sequence of compression bodies $V_i$, whose nearest point
projections to $\gamma$ have diameters tending to infinity.  Recall
that the disc sets $\mathcal{D}(V_i)$ are uniformly quasi-convex.
Thus there is a constant $k$, depending on the quasi-convexity
constants of the disc sets and the hyperbolicity constant of the curve
complex $\C(S)$, such that (after passing to a subsequence and
re-indexing) the intersection of $\mathcal{D}(V_i)$ with a
$k$-neighbourhood of $\gamma$ has diameter at least $i$.  By
translating the disc sets by powers of $g$, and possibly passing to a
further subsequence and re-indexing, we may assume that both
$\gamma(0)$ and $\gamma(i)$ are distance at most $k$ from
$\mathcal{D}(g^{n_i} V_i)$.  We may further relabel the disc sets and
just write $V_i$ for the given translate $g^{n_i} V_i$.

We may now apply our stability result, Theorem \ref{Thm:Stability}.
This implies that there is a single compression body $W$ contained in
infinitely many of the $V_i$, and so in particular the positive limit
point $\lambda^+_g$ of $g$ is contained in the limit set of
$\mathcal{D}(W)$.  Thus there is a compression body $W'$ such that
some power of $g$ extends over $W'$.  Thus $g$ acts elliptically, and
we are done.
\end{proof}

\begin{theorem}
Let $\mu$ be a probability distribution on the mapping class group of
a closed orientable surface of genus $g$, whose support has bounded
image in the compression body graph $\mathcal{H}$, and which contains
two independent pseudo-Anosov elements whose stable laminations do not
lie in the limit set of any compression body.  Then the probability
that $M(w_n)$ is hyperbolic and of Heegaard genus $g$ tends to one
exponentially quickly.
\end{theorem}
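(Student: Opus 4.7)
The plan is to combine Maher and Tiozzo's exponential drift theorem for random walks on Gromov hyperbolic spaces, applied to the action of $\text{Mod}(S)$ on $\calH$, with the inequality between $\calH$-distance and Hempel distance that comes from the $1$-Lipschitz electrification map, and then to invoke standard $3$-manifold results to upgrade large Hempel distance to hyperbolicity and to fixed Heegaard genus.

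First, I would verify the hypotheses Maher-Tiozzo require for the induced walk on $\calH$. By Theorem \ref{Thm:InfDiam}, $\calH$ is Gromov hyperbolic of infinite diameter. Let $f_1, f_2 \in \mathrm{supp}(\mu)$ be the two independent pseudo-Anosovs of the hypothesis. Because their stable laminations lie outside the limit set of every compression body, the alternative Biringer-Johnson-Minsky theorem proved earlier in this section forces that no power of either $f_i$ extends over a compression body. The characterization of loxodromic mapping classes on $\calH$ developed in Section \ref{section:loxodromics} then shows each $f_i$ acts loxodromically on $\calH$, and the final proposition of that section promotes their independence on $\C(S)$ to independence on $\calH$. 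The bounded-image hypothesis on $\mathrm{supp}(\mu)$ is the bounded-step-size condition needed for the exponential estimates.

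Second, I would apply the Maher-Tiozzo positive drift theorem together with its exponential deviation companion to produce $\ell > 0$ such that, for each $\alpha \in (0, \ell)$, there exist $c > 0$ and $n_0$ with
\[
\P\bigl(d_\calH(V, w_n V) < \alpha n\bigr) \le e^{-c n}
\]
for all $n \ge n_0$, where $V \in \calH$ is any fixed basepoint. Since $\calH$ is quasi-isometric to $\C(S)_\calD$ and the projection $\C(S) \to \C(S)_\calD$ is distance non-increasing, there are constants $K$ and $C$ so that
\[
d_\calH(V, w_n V) \le K\, d_{\C(S)}\bigl(\calD(V), w_n \calD(V)\bigr) + C.
\]
Combined with the drift bound, the Hempel distance of the splitting $M(w_n) = V \cup_{w_n} V$ grows at least linearly in $n$ outside an event of probability at most $e^{-cn}$.

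Once the Hempel distance exceeds $\max(3, 2g)$, Hempel's strong irreducibility result together with Thurston-Perelman geometrization shows that $M(w_n)$ is hyperbolic, while Scharlemann-Tomova's theorem on Heegaard-splitting distance forces its Heegaard genus to equal $g$. Thus $M(w_n)$ is hyperbolic of Heegaard genus $g$ with probability at least $1 - e^{-c'n}$ for large $n$. I expect the main obstacle to be extracting from \cite{maher-tiozzo} the precise form of the exponential concentration needed for the bounded-support walk on $\calH$; once that is in hand, the translation to Hempel distance is essentially automatic and the final topological step is standard.
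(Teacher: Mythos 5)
Your proposal is essentially the same as the paper's argument: apply the Maher--Tiozzo positive drift with exponential decay theorem to the action on $\calH$, verify the non-elementarity hypothesis using the fact that the given pseudo-Anosovs act as independent loxodromics on $\calH$, bound Hempel distance below by $\calH$-distance, and finish with Kobayashi/Hempel/Perelman plus Scharlemann--Tomova. The only slight divergence is your route to loxodromicity: you pass through the alternate Biringer--Johnson--Minsky statement (stable lamination outside every limit set implies no power extends over any compression body) and then the elliptic/loxodromic dichotomy, whereas the paper goes directly via Theorem~\ref{Thm:Ray} to obtain a bi-infinite quasi-axis in $\C(S)_\calD$; both are valid, and note that the dichotomy proposition you invoke actually appears in Section~\ref{section:applications} rather than Section~\ref{section:loxodromics}.
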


\begin{proof}
Maher and Tiozzo \cite{maher-tiozzo}*{Theorem 1.2} show that given a
countable group $G$ acting non-elementarily on a separable Gromov
hyperbolic space $(X, d_X)$, a finitely supported random walk on $G$
has positive drift with exponential decay: that is, there are
constants $L > 0, K \ge 0$, and $c < 1$ such that
\[ \P \left[ d_X(x_0, w_n x_0) \ge Ln \right] \ge 1 - Kc^n.  \]
We may apply this result to the mapping class group $G$ acting on the
compression body graph $\mathcal{H}$. A subgroup of $G$ acts
\emph{non-elementarily} on $\mathcal{H}$ if $G$ contains two
independent loxodromic elements.  Geodesics in $\C(S)$ project to
reparameterized quasi-geodesics in $\mathcal{H}$, so by Theorem
\ref{Thm:Ray}, if the stable and unstable laminations of a
pseudo-Anosov element $h$ do not lie in the limit set of some
compression body, then the image of the axis of $h$ in $\C_\calD(S)$
is a bi-infinite quasi-axis for the action of $h$ on the compression
body graph $\mathcal{H}$, and so in particular $h$ acts loxodromically
on $\mathcal{H}$.  If $h_1$ and $h_2$ are two pseudo-Anosov elements,
which act loxodromically on the compression body graph $\mathcal{H}$,
and act independently on the curve complex $\C(S)$, then in fact they
act independently on the compression body graph $\mathcal{H}$, as the
geodesic from $\lambda^+_{h_1}$ to $\lambda^+_{h_2}$ in the curve
complex $\C(S)$ projects to an reparameterized quasi-geodesic in
$\mathcal{H}$, which has infinite diameter by Theorem \ref{Thm:Ray}.
Finally, we observe that the compression body graph $\mathcal{H}$ is a
countable simplicial complex, and so is separable: that is, has a
countable dense subset, and so the hypotheses of \cite[Theorem
1.2]{maher-tiozzo} are satisfied.

The \emph{Hempel distance} of a Heegaard splitting $M(h)$ is the
distance in the curve complex between the disc sets of the two
handlebodies.  This is bounded below by distance in the compression
body graph $d_{\mathcal{H}}(x_0, w_n x_0)$.  Since distance in the
compression body graph has positive drift with exponential decay, so
does Hempel distance.  Finally, we observe that if the Hempel distance
is at least three, then $M(w_n)$ is hyperbolic, by work of Hempel
\cite{hempel}*{Corollaries 3.7 and 3.8}, and Perelman's proof of
Thurston's geometrization conjecture \cite{morgan-tian}.  If the
splitting distance is greater than $2g$, then the given Heegaard
splitting is a minimal genus Heegaard splitting for $M(w_n)$, by work
of Scharlemann and Tomova \cite{st}*{page 594}.
\end{proof}

%%%%%%%%%%%%%%%%%%%%%%%%%%%%%%%%%%%%%%%%%%%%%%%%%%%%%%%%%%%%%%%%%%%%%%%%%%%%%%
\appendix
\section{Train tracks}
%%%%%%%%%%%%%%%%%%%%%%%%%%%%%%%%%%%%%%%%%%%%%%%%%%%%%%%%%%%%%%%%%%%%%%%%%%%%%%

In this section we review the work of Masur and Minsky \cite{mm3}.
They prove the version of Proposition \ref{prop:splitting seq} we
require, but we find it convenient to write down an argument which
differs from theirs in certain details.

Let $a$ and $b$ be two essential simple closed curves in $S$ in
minimal position.  A \emph{subarc} of a simple closed curve is a
closed connected subinterval.  We will only every consider subarcs of
$a$ or $b$ whose endpoints lie in $a \cap b$.  Given a pair of curves
$a$ and $b$ in minimal position, a \emph{bicorn} is a simple closed
curve, denoted by $c = (a_i, b_i)$, consisting of the union of one
subarc $a_i$ of $a$ and one subarc $b_i$ of $b$.

A subarc of $a$ or $b$ is \emph{innermost} with respect to $a \cap b$
if its endpoints lie in $a \cap b$, and its interior is disjoint from
$a \cap b$.  We may abuse notation by referring to these innermost
subarcs as components of either $a \setminus b$ or $b \setminus a$,
though in fact we wish to include their endpoints.  An innermost arc
$b_i$ of $b$ with respect to $a \cap b$ is a \emph{returning arc} if
both endpoints lie on the same side of $a$ in $S$.  This is
illustrated on the left hand side of Figure \ref{fig:arc} below.

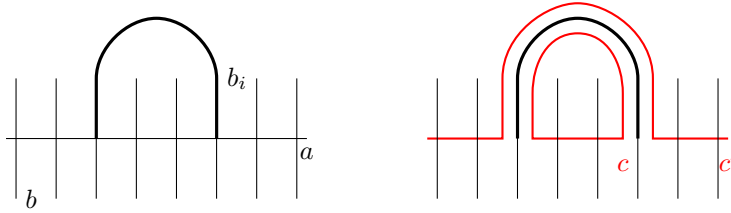
\begin{figure}[H]
\begin{center}
\begin{tikzpicture}[scale=0.4]

\draw (0, 0) -- (10, 0) node [below] {$a$};

\draw (0.333, -2) node [right] {$b$};

\foreach \i in {0.333,1.666,...,10}{
    \draw (\i,-2) -- (\i, 2); 
}

\draw [very thick] (3,0) --
                   (3,2) .. controls (3,3) and (4,4) .. 
                   (5, 4) .. controls (6,4) and (7,3) ..
                   (7, 2) node [right] {$b_i$} --
                   (7, 0);

\begin{scope}[xshift=14cm]
\draw [thick, red] (0, 0) -- 
      (2.5, 0) --
      (2.5, 2) .. controls (2.5, 3.5) and (4, 4.5) ..
      (5, 4.5) .. controls (6, 4.5) and (7.5, 3.5) ..
      (7.5, 2) --
      (7.5, 0) --
      (10, 0) node [below] {$c' \strut$};

\draw [thick, red] (3.5,0) --
      (3.5, 1.5) .. controls (3.5, 2.5) and (4, 3.5) ..
      (5, 3.5) .. controls (6, 3.5) and (6.5, 2.5) ..
      (6.5, 1.5) --
      (6.5, 0) node [below] {$c \strut$} --
      cycle;

\foreach \i in {0.333,1.666,...,10}{
    \draw (\i,-2) -- (\i, 2); 
}

\draw [very thick] (3,0) --
                   (3,2) .. controls (3,3) and (4,4) .. 
                   (5, 4) .. controls (6,4) and (7,3) ..
                   (7, 2) --
                   (7, 0);
\end{scope}

\end{tikzpicture}
\end{center}
\caption{An innermost arc $b_i$ of $b$ forming a returning arc for
  $a$.} \label{fig:arc}
\end{figure}

Given a simple closed curve $a$ and a returning arc $b_i$, we may
produce a new simple closed curve by \emph{arc surgery} of $a$ with
respect to $b_i$.  There are two possible bicorns, $c$ and $c'$,
formed from the union of $b_i$ with one of the two subarcs of $a$ with
endpoints $\partial b_i$.  These are illustrated on the right hand
side of Figure \ref{fig:arc}, where we have isotoped the replacement
curves to be disjoint from $b_i$ for clarity.

We say a bicorn is \emph{returning} if $b_i$ is a returning arc for
$a$.  We say a sequence of returning bicorns $(a_i, b_i)$ is
\emph{nested} if $a_{i+1} \subset a_{i}$, and $b_{i+1}$ is a returning
arc for $c_i$, for all $i$.  An adjacent pair in a sequence of
returning bigons is illustrated in Figure \ref{pic:nested bicorns}.

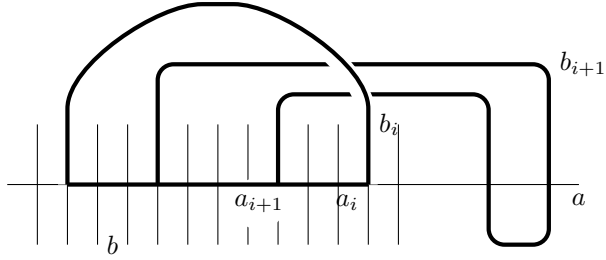
\begin{figure}[H]
\begin{center}
\begin{tikzpicture}[scale=0.4]

\draw (-2, 0) -- (17, 0) node [below] {$a$};

\draw (1, -2) node [right] {$b$};

\foreach \i in {-1,0,...,11}{
    \draw (\i,-2) -- (\i, 2); 
}

\draw [ultra thick, rounded corners=6pt] (3,0) --
                    (3, 4) --
                    (16, 4) node [right] {$b_{i+1}$} --
                    (16, -2) --
                    (14, -2) --
                    (14, 3) --
                    (7,3) --
                    (7, 0);

\draw [line width=0.25cm, white] (0,0) --
                   (0,2) .. controls (0,4) and (3,6) .. 
                   (5, 6) .. controls (7,6) and (10,4) ..
                   (10, 2) --
                   (10, 0);

\draw [ultra thick, rounded corners=6pt] (0,0) --
                   (0,2) .. controls (0,4) and (3,6) .. 
                   (5, 6) .. controls (7,6) and (10,4) ..
                   (10, 2) node [right] {$b_i$} --
                   (10, 0);

\draw [thick] (0, 0) -- (10, 0) node [below left] {$a_i$};
\draw (7,0) node [below left, circle, fill=white, inner sep=0] {$a_{i+1}$};
\draw [ultra thick] (0, 0) -- (10, 0);

\end{tikzpicture}
\end{center}
\caption{Nested bicorns.} \label{pic:nested bicorns}
\end{figure}

\noindent For a nested bicorn sequence, each pair of adjacent bicorns
$c_i$ and $c_{i+1}$ may be made disjoint after a small isotopy.

Let $D$ and $E$ be essential embedded discs in minimal position in a
compression body.  Then $a = \partial D$ and $b = \partial E$ is a
pair of essential simple closed curves in minimal position.  We say a
disc $F$ contained in $D \cup E$ is a \emph{bicorn disc} if the
boundary of the disc $F$ is a bicorn in $a \cup b$.

We omit the proof of the following observation.

\begin{proposition}
Let $D$ and $E$ be essential embedded discs in minimal position in a
compression body, and let $F_i$ be a bicorn disc in $D \cup E$ with
boundary $a_i \cup b_i$.  Then there is an arc $\gamma_i$ in
$D \cap E$ such that $F$ is the union of a subdisc $D_i$ of $D$
bounded by $a_i \cup \gamma_i$ and a subdisc $E_i$ of $E$ bounded by
$b_i$ and $\gamma_i$, which only intersect along $\gamma_i$.
\end{proposition}

A \emph{nested bicorn disc sequence} is a nested bicorn sequence in
which every bicorn $c_i = (a_i, b_i)$ bounds a disc $F_i = (D_i,
E_i)$, and furthermore $D_{i+1} \subset D_i$ for all $i$.

Two essential simple closed curves $a$ and $b$ in minimal position in
$S$, and a subinterval $a_i \subset a$ with endpoints in $a \cap b$,
determines a pre-train track $\tau_i'$ with a single switch, as
follows: discard $a \setminus a_i$, collapse $a_i$ to a point, and
smooth the tangent vectors as illustrated in Figure
\ref{pic:collapse}.  The dashed line labelled $a$ on the right hand
side of Figure \ref{pic:collapse} is not part of the pre-train track
$\tau_i'$.  Rather, it is drawn for comparison with the left hand
diagram.

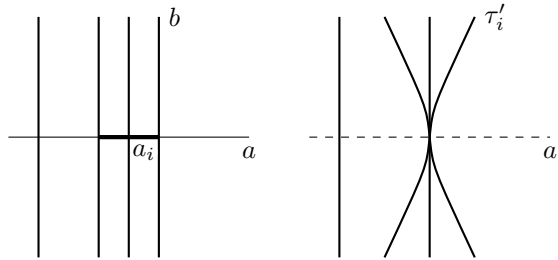
\begin{figure}[H]
\begin{center}
\begin{tikzpicture}[scale=0.4]

\draw [thick] (-3, -4) -- (-3, 4);
\draw [thick] (-1, -4) -- (-1, 4);
\draw [thick] (0, -4) -- (0, 4);
\draw [thick] (1, -4) -- (1, 4) node [right] {$b$};

\begin{scope}[xshift=+10cm]
    \draw [dashed] (-4, 0) -- (4, 0) node [below] {$a$};

    \draw [thick] (-3, -4) -- (-3, 4);
    \draw [thick] (0, -4) -- (0, 4);
    \draw [thick] (0, 0) .. controls (0.1, 1) .. (1.5, 4) node [right] {$\tau_i'$};
    \draw [thick] (0, 0) .. controls (-0.1, 1) .. (-1.5, 4);
    \draw [thick] (0, 0) .. controls (0.1, -1) .. (1.5, -4);
    \draw [thick] (0, 0) .. controls (-0.1, -1) .. (-1.5, -4);
\end{scope}

\draw (-4, 0) -- (4, 0) node [below] {$a$};
\draw (0.5, -0.5) node [circle, fill=white, inner sep=0] {$a_i$};
\draw [ultra thick] (-1, 0) -- (1, 0);

\end{tikzpicture}
\end{center} 
\caption{Smoothing intersections of $a$ and $b$ by collapsing $a_i$.} \label{pic:collapse}
\end{figure}

A pair of simple closed curves $a$ and $b$ in minimal position divide
the surface $S$ into a number of complementary regions, whose
boundaries consist of alternating innermost subarcs of $a$ and $b$. We
say a complementary region is a \emph{rectangle} if it is a disc,
whose boundary consists of exactly four innermost subarcs.

Given a pre-train tack $\tau$ contained in a surface $S$, a
\emph{bigon collapse} is a homotopy of the surface, supported in a
neighbourhood of a bigon, which maps the bigon to a single arc, as
illustrated in Figure \ref{pic:bigon collapse}.

\begin{figure}[H]
\begin{center}
\begin{tikzpicture}[scale=0.4]

\draw [thick] (0, 0) --
              (1, 0) .. controls (2, 0) and (2, 1) ..
              (3, 1) .. controls (4, 1) and (4, 0) ..
              (5, 0) --
              (6, 0);
              
\begin{scope}[yscale=-1]
\draw [thick] (0, 0) --
              (1, 0) .. controls (2, 0) and (2, 1) ..
              (3, 1) .. controls (4, 1) and (4, 0) ..
              (5, 0) --
              (6, 0);
\end{scope}

\draw [thick] (9,0) --(15, 0);

\end{tikzpicture}
\end{center} 
\caption{Collapsing a bigon.} \label{pic:bigon collapse}
\end{figure}
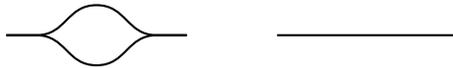

\noindent We break the proof of Proposition \ref{prop:splitting seq}
into the following three propositions.

We say a pre-track \emph{collapses} to a train track if there is a
sequence of bigon collapses which produces a train track.  We say a
bicorn $(a_i, b_i)$ is \emph{non-degenerate} if there is a
non-rectangular component of $S \setminus (a \cup b)$ whose boundary
intersects $a \setminus a_i$.  Non-degeneracy allows us to collapse a
pre-track to a track, as follows.

\begin{proposition}\label{prop:collapse}
Let $a$ and $b$ be simple closed curves in minimal position, and let
$(a_i, b_i)$ be a non-degenerate bicorn.  Then the pre-track $\tau'_i$
determined by $a_i$ and $b$ bigon collapses to a train track $\tau_i$.
Furthermore, $\tau_i$ is switch dual to the bicorn $c_i$ determined by
$(a_i, b_i)$.
\end{proposition}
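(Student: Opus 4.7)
The plan is to analyze the pre-track $\tau'_i$ combinatorially, identify which of its complementary regions are bigons, collapse them iteratively, and verify that the resulting train track is switch-dual to $c_i$. First I would describe $\tau'_i$ concretely: writing $a_i \cap b = \{p_0, p_1, \ldots, p_n\}$ with $p_0$ and $p_n$ the endpoints of $a_i$, the collapse identifies all of these points on $b$ to a single switch $v$, and after discarding $a \setminus a_i$ and smoothing, the pre-track $\tau'_i$ consists of the arcs of $b$ cut off at these points, each now a loop incident at $v$. The complementary regions of $\tau'_i$ in $S$ are obtained from those of $a \cup b$ by merging along the discarded arcs of $a \setminus a_i$ and by collapsing to $v$ the subarcs of $a_i$ appearing on their boundaries.

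Next I would identify the bigons of $\tau'_i$: each bigon is a disc region bounded by two loops at $v$, and these arise precisely from rectangular complementary regions of $a \cup b$ both of whose $a$-sides lie in $a_i$, since both such $a$-sides collapse to $v$ while the two opposing $b$-sides become the two branches bounding the bigon. I would then collapse these bigons iteratively; each collapse identifies two branches meeting at $v$, strictly reduces the branch count, and preserves the single-switch property, so the procedure terminates in finitely many steps at a pre-track $\tau_i$ with no bigons. The non-degeneracy hypothesis is used to exclude monogons: a monogon of $\tau_i$ would correspond to a disc in $S$ whose boundary becomes a single loop at $v$ after all the collapses, which forces the disc to be built only from rectangles of $a \cup b$ having both $a$-sides in $a_i$; this contradicts the existence of a non-rectangular component of $S \setminus (a \cup b)$ whose boundary meets $a \setminus a_i$, so no monogon appears and $\tau_i$ is genuinely a train track.

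Finally, for switch-duality, observe that in the representative where $a_i$ has been collapsed to the switch $v$, the bicorn $c_i = a_i \cup b_i$ reduces to a loop $b_i$ based at $v$ lying inside $\tau_i$. Pushing $b_i$ slightly off $\tau_i$ within its tie neighbourhood, with the push terminating by crossing $\tau_i$ transversely once at $v$, yields a representative of the isotopy class of $c_i$ that meets $\tau_i$ at exactly one point, the switch. The absence of bigons between $c_i$ and $\tau_i$ follows from the minimal position of $a$ and $b$ together with the absence of bigon complementary regions of $\tau_i$. I expect the main obstacle to be the combinatorial bookkeeping in the bigon-collapse step, in particular verifying that non-degeneracy rules out every way a monogon could appear during the collapse sequence, and tracking how $b_i$ corresponds to a well-defined chain of branches of the evolving pre-track as we pass from $\tau'_i$ to $\tau_i$.
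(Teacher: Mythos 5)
Your proposal takes a genuinely different route from the paper.  The paper never analyzes complementary regions of $\tau_i'$ directly.  Instead it builds a \emph{rectangular tie neighbourhood} $\calF_i$ for $\tau_i'$ out of the auxiliary foliated rectangles (vertex rectangles $R_x$, $a$- and $b$-rectangles, and face rectangles), identifies the switch rectangle $R_0$ as a neighbourhood of a subarc $a_i^+ \supseteq a_i$, and shows via the ``$b$-strip'' and ``face-strip'' decomposition that $\calF_i \setminus R_0$ is a disjoint union of branch rectangles, each containing some number of parallel branches.  Parallel branches in a single branch rectangle are precisely the bigons, and the existence of a rectangular tie neighbourhood with disjoint branch rectangles is what guarantees the collapse produces a genuine train track with $c_i$ switch-dual.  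Non-degeneracy is used for a specific purpose in that construction: to guarantee that $a_i^+$ is a \emph{proper} subarc of $a$, so that $R_0$ is a rectangle rather than an annulus.  Your proposal, which works directly with the complementary regions of the pre-track, is a reasonable alternative strategy, but as written it has two concrete gaps.

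First, your characterization of the bigons of $\tau_i'$ is incomplete.  You claim bigons ``arise precisely from rectangular complementary regions of $a \cup b$ both of whose $a$-sides lie in $a_i$.''  In fact a bigon of $\tau_i'$ can also arise from a \emph{chain} of rectangular regions of $a \cup b$, glued along arcs of $a \setminus a_i$, whose two extremal $a$-sides lie in $a_i$.  When such a chain is merged and the $a_i$-arcs are collapsed, its boundary consists of exactly two branches (each a concatenation of $b$-arcs of the constituent rectangles), so it is a bigon of $\tau_i'$.  This is precisely what the paper's ``face strips'' handle.  Any direct analysis of the collapse must account for these chains, since they are the generic source of multiple parallel branches in a single branch rectangle.

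Second, your use of non-degeneracy to exclude monogons is not correct as stated.  You argue that a monogon of $\tau_i$ ``forces the disc to be built only from rectangles of $a \cup b$ having both $a$-sides in $a_i$,'' and that this ``contradicts the existence of a non-rectangular component of $S \setminus (a \cup b)$ whose boundary meets $a \setminus a_i$.''  But these two statements concern different parts of the surface: the hypothetical monogon lives on one side of the track, while the non-rectangular component promised by non-degeneracy may well lie elsewhere, so no contradiction follows directly.  Moreover a monogon of $\tau_i'$ would correspond to a disc bounded by a subarc $\alpha \subset a_i$ and a subarc $\beta \subset b$ whose interior may cross $a \setminus a_i$; this is \emph{not} the kind of bigon that minimal position of $a$ and $b$ excludes, so appealing to minimal position alone does not dispose of it either.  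In the paper's argument, non-degeneracy enters only to show that the saturated arc $a_i^+$ is a proper interval in $a$, hence that $R_0$ is a rectangle; the absence of monogons then falls out of the structure of the tie neighbourhood (every branch rectangle attaches to $R_0$ along two distinct sides).  If you want to argue directly with complementary regions, you need a separate, correct argument here; as it stands the step would fail.
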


A bicorn sequence $(a_i, b_i)$ is \emph{non-degenerate} if every
bicorn $(a_i, b_i)$ is non-degenerate.  If the initial bicorn
$(a_1, b_1)$ is non-degenerate, then this implies that every
subsequent bicorn is non-degenerate.

\begin{proposition}\label{prop:nested}
Let $a$ and $b$ be simple closed curves in minimal position, and let
$(a_i, b_i)$ be a collapsible nested bicorn sequence, and let $\tau_i$
be the corresponding train tracks.  Then $\tau_i$ is a carrying
sequence of train tracks: that is, $\tau_{i+1} \carried \tau_{i}$ for
all $i$.
\end{proposition}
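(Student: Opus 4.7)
The plan is to embed $\tau_{i+1}$ in a tie neighborhood $N(\tau_i)$ of $\tau_i$ transverse to the ties, thereby establishing $\tau_{i+1} \carried \tau_i$. First I would describe $N(\tau_i)$ concretely: since $\tau_i$ arises from $b \cup a_i$ by collapsing $a_i$ and performing bigon collapses (Proposition \ref{prop:collapse}), $N(\tau_i)$ is realized in $S$ as a regular neighborhood of $b \cup a_i$ with bigons excised. The switch rectangle $R_i$ is a thin neighborhood of $a_i$ whose ties run perpendicular to $a_i$, while branch rectangles lie along the arcs of $b$ between consecutive intersections with $a_i$, with ties perpendicular to $b$.

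Since $a_{i+1} \subset a_i$, the graph $b \cup a_{i+1}$ is contained in $b \cup a_i$ as a subset of $S$, so the pre-track $\tau_{i+1}'$ embeds naturally into $N(\tau_i)$. The sub-arc $a_{i+1}$ lies inside $R_i$ parallel to $a_i$, hence transverse to the ties of $R_i$; at the switch of $\tau_{i+1}'$, the pre-track smoothing tangent to $a_{i+1}$'s direction --- which coincides with $a_i$'s direction --- makes the adjacent $b$-edges transverse to the ties. Away from intersections of $b$ with $a_i$, the curve $b$ is already transverse to ties in each branch rectangle.

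The only remaining transversality issue is at intersections $q \in b \cap (a_i \setminus a_{i+1})$, where $q$ is an interior point of some edge of $\tau_{i+1}'$ but $b$ still literally crosses $a_i$ perpendicularly (parallel to the ties of $R_i$). I would correct this by an arbitrarily small isotopy of $b$ supported in a neighborhood of each such $q$ inside $R_i$, tilting $b$ slightly so that it crosses $a_i$ at a nonzero angle with the ties. Each isotopy is local, preserves the topological type of $b$, and --- since $a$ and $b$ are in minimal position --- is supported in a disc disjoint from all other intersection points, so no new bigons are introduced. After these tilts, $\tau_{i+1}'$ embeds transverse to the ties in $N(\tau_i)$, giving $\tau_{i+1}' \carried \tau_i$; bigon collapses of $\tau_{i+1}'$ can then be performed inside $N(\tau_i)$ without disturbing transversality, so $\tau_{i+1} \carried \tau_i$ as well. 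The main technical step is verifying that the local tilting at each $q$ assembles into a globally consistent, transverse embedding, which I expect to be routine because the supports are disjoint by minimal position and the smoothing conventions at the switches of $\tau_i$ and $\tau_{i+1}$ are compatible (both are tangent to the direction of $a$).
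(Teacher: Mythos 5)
Your strategy of embedding $\tau'_{i+1}$ into a tie neighborhood of $\tau_i$ is a legitimate direct approach, and it is genuinely different from the paper's proof, which simply observes that every train route of $\tau_{i+1}$ is a concatenation of arcs of $b$ based at points of $a_{i+1} \subset a_i$ --- hence is also a train route of $\tau_i$ --- and then invokes Penner--Harer \cite{ph}*{Theorem 2.4.1} to conclude that $\tau_{i+1}$ is obtained from $\tau_i$ by splitting and shifting.

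However, your proposal gets the geometry of the tie neighborhood backwards, and this produces a spurious difficulty that the tilting maneuver is then designed to repair. In the paper's construction (Proposition \ref{prop:collapse}), the switch rectangle $R_0$ is foliated by ties \emph{parallel} to $a$, not perpendicular to it: the vertex rectangles and $a$-rectangles composing $R_0$ are each foliated by arcs parallel to $a$ (Figures \ref{fig:rx}--\ref{pic:Ralpha}), and this is forced because the branches of $\tau_i$ crossing $R_0$ run roughly along $b$, with the tangent direction at the switch along $b$ (Figure \ref{pic:collapse}); ties must therefore be transverse to $b$, i.e.\ parallel to $a$. In particular, your closing remark that ``both [smoothings] are tangent to the direction of $a$'' has the same sign error --- they are tangent to $b$. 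Your stated convention --- ties perpendicular to $a_i$ in $R_0$ but perpendicular to $b$ in the branch rectangles --- is not even internally consistent, since those two tie directions differ by a right angle where $R_0$ abuts a branch rectangle, so the object you describe is not a foliated regular neighborhood of $\tau_i$. With the correct tie direction, no tilting is needed at the points $q \in b \cap (a_i \setminus a_{i+1})$: the arc of $b$ crosses $a_i$ transversely there, hence is already transverse to ties parallel to $a$. So the embedding idea can be made to work, and in fact becomes \emph{simpler} once the orientation is corrected; but as written, the transversality bookkeeping is wrong and the central ``fix'' addresses a problem that does not exist.
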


\begin{proposition}\label{prop:discs}
Let $a$ and $b$ be simple closed curves which bound discs in a
compression body $V$.  Then there is a collapsible nested bicorn disc
sequence $\{ F_i \}_{i = 1}^n$ with bicorn boundaries
$\{ c_i \}_{i=1}^n$, as follows.  If we define $c_0 = a$, then the
sequence of simple closed curves $\{ c_i \}_{i=0}^n$ is a disc surgery
sequence connecting $a$ and $b$.
\end{proposition}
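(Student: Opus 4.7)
The plan is to construct the nested bicorn sequence iteratively via outermost-bigon disc surgery, inducting on the number of arcs in $D \cap E$, where $D$ and $E$ are the essential discs bounded by $a$ and $b$ in $V$, placed in minimal position. The base case $D \cap E = \varnothing$ is the trivial empty sequence.

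For the first surgery, pick an outermost bigon $E_1$ of $E$ with respect to $D \cap E$, with $\partial E_1 = b_1 \cup \alpha_1$, where $b_1 \subset b$ and $\alpha_1 \subset D$. The key sub-claim is that $b_1$ is a returning arc for $a$: since the interior of $E_1$ is disjoint from $D$, the bigon $E_1$ lies on one side of $D$ in $V$, and the two local sides of $a$ in $S$ near each endpoint of $b_1$ correspond to the two sides of $D$ in $V$, so both endpoints of $b_1$ lie on the side of $a$ matching $E_1$. Disc surgery of $D$ along $E_1$ yields two essential discs with bicorn boundaries $a' \cup b_1$ and $a'' \cup b_1$, where $a = a' \cup a''$ is the decomposition along $\partial b_1$; take $D_1$ to be either one, giving $c_1 = a_1 \cup b_1$. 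A small push-off of $b_1$ off of $b$ along $S$ restores minimal position with $E$ and strictly reduces $|D_1 \cap E|$.

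For the inductive step, assume $D_i$ has been constructed, with $\partial D_i = c_i = a_i \cup b_i$, with $a_i \subsetneq a_{i-1} \subsetneq \cdots \subsetneq a$, and with $b_j$ a returning arc for $c_{j-1}$ for each $j \le i$. The earlier push-off ensures $b \cap \partial D_i \subset a_i$, so any arc of $E \cap D_i$ has both endpoints on $a_i$. Pick an outermost bigon $E_{i+1}$ of $E$ relative to $D_i$, with $\partial E_{i+1} = \alpha_{i+1} \cup b_{i+1}$ and $\partial \alpha_{i+1} = \{p,q\} \subset a_i$. The arc $\alpha_{i+1}$ cuts $D_i$ into two subdiscs; one is bounded by $\alpha_{i+1}$ and the subarc of $a_i$ between $p$ and $q$, and attaching $E_{i+1}$ to this subdisc yields the bicorn disc $D_{i+1}$ with $\partial D_{i+1} = a_{i+1} \cup b_{i+1}$, where $a_{i+1} \subsetneq a_i$. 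The same one-sided-in-$V$ argument shows $b_{i+1}$ is a returning arc for $c_i$. Since $|D_i \cap E|$ strictly decreases, the process terminates at some $D_n$ disjoint from $E$, and the sequence $\{c_i\}_{i=0}^n$ is by construction a disc surgery sequence connecting $a$ and $b$.

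The main obstacle I anticipate is verifying non-degeneracy of each bicorn $(a_i, b_i)$, so that Proposition \ref{prop:collapse} applies and the pre-track $\tau_i'$ actually bigon-collapses to a train track $\tau_i$. Essentiality of each surgery disc, via the irreducibility argument already used in the paper, ensures that the bicorn is not null-homotopic or isotopic to $a$, but to confirm that the complement $S \setminus (a \cup b)$ contains a non-rectangular region meeting $a \setminus a_i$ requires invoking minimal position more carefully; this is the technical heart of the Masur--Minsky construction. A secondary bookkeeping issue is coherently performing the successive push-offs of the $b_j$ without generating spurious intersections with $E$, which is a standard collar-neighborhood argument.
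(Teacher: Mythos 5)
Your proof takes the same basic approach as the paper (iterated outermost-bigon disc surgery, with essentiality from what the paper states as Proposition \ref{prop:essential}), but you explicitly leave open the one point that is actually the crux: verifying that the resulting bicorns are non-degenerate, so that the sequence is \emph{collapsible} in the sense required by Proposition \ref{prop:collapse}. Since the statement you are asked to prove is precisely that a \emph{collapsible} nested bicorn sequence exists, flagging non-degeneracy as an ``anticipated obstacle'' and deferring it to ``the technical heart of the Masur--Minsky construction'' leaves a genuine gap rather than a loose end.

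The paper's resolution is actually short, and the key step is the one you skip: the initial choice of $a_1$ is not arbitrary. By an Euler characteristic count, $S \setminus (a \cup b)$ has at least one non-rectangular complementary region; the paper fixes a component $\alpha$ of $a \setminus b$ on the boundary of such a region, and then, having chosen the returning arc $b_1$ from an outermost disc of $E$, takes $a_1$ to be the subarc of $a$ sharing endpoints with $b_1$ and \emph{disjoint from $\alpha$}. This forces $a \setminus a_1$ to meet a non-rectangular region, i.e. $(a_1, b_1)$ is non-degenerate. In your version you write ``take $D_1$ to be either one,'' which gives up exactly this freedom and can produce a degenerate initial bicorn. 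Once the initial bicorn is non-degenerate, the nesting $a_{i+1} \subset a_i$ gives $a \setminus a_i \supset a \setminus a_1$, so every later bicorn inherits non-degeneracy for free (this is the remark after Proposition \ref{prop:collapse}); no further verification is needed at subsequent steps. So the missing idea is not deep, but it is the only part of the proposition that does not follow immediately from what you wrote, and it requires the specific (non-symmetric) choice of $a_1$.

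One smaller divergence: you insert push-offs of $b_1$ to restore minimal position between $c_i$ and $b$ at each stage, whereas the paper explicitly does \emph{not} maintain minimal position (``The curves $b = \partial E$ and the bicorn $c_i = \partial D_i$ need not be in minimal position'') and instead works directly with outermost discs of $E$ relative to $D_i$. Your version is not wrong, but it introduces bookkeeping you then have to worry about (as you note), and it is unnecessary.
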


We now define a \emph{rectangular tie neighbourhood} for a train track
$\tau$ with a single switch.  This is a regular neighbourhood of
$\tau$ in the surface $S$ which is foliated by intervals transverse to
$\tau$, with a decomposition as a union of foliated rectangles with
disjoint interiors, with the following properties.  There is a single
rectangle containing the switch, which we shall call the \emph{switch
  rectangle}.  There is one rectangle for each branch, which we shall
call a \emph{branch rectangle}.  The branch rectangles have disjoint
closures.

\begin{figure}[H]
\begin{center}
\begin{tikzpicture}[scale=0.1]

\draw (0, -15) node [below] {The switch rectangle in the center is shaded.};

\filldraw [draw=black,fill=lightgray] (-15,-15) rectangle (15, 15);
\foreach \x in {-15,-13,...,15} { \draw (\x, -15) -- (\x, 15); }; 

\draw [thick] (0, 0) .. controls  (5, 0) and (10, 10) .. (15,
10);
\draw [thick] (0, 0) .. controls  (5, 0) and (10, -10) .. (15,
-10);

\draw [thick] (0, 0) -- (-15, 0);
\draw [thick] (0, 0) .. controls  (-5, 0) and (-10, 12) .. (-15,
12);
\draw [thick] (0, 0) .. controls  (-5, 0) and (-10, -12) .. (-15,
-12);

\draw (15, 5) rectangle (31, 15);
\foreach \x in {15,17,...,31} { \draw (\x, 5) -- (\x, 15); };
\draw [thick] (15, 10) -- (31, 10);

\begin{scope}[yshift=-20cm]
\draw (15, 15) rectangle (31, 5);
\foreach \x in {15,17,...,31} { \draw (\x, 5) -- (\x, 15); }; 
\draw [thick] (15, 10) -- (31, 10);
\end{scope}

\draw (-15, 3) rectangle (-31, -3);
\foreach \x in {-15,-17,...,-31} { \draw (\x, 3) -- (\x, -3); };
\draw [thick] (-15, 0) -- (-31, 0);

\begin{scope}[yshift=-12cm]
\draw (-15, 3) rectangle (-31, -3);
\foreach \x in {-15,-17,...,-31} { \draw (\x, 3) -- (\x, -3); };
\draw [thick] (-15, 0) -- (-31, 0);
\end{scope}

\begin{scope}[yshift=12cm]
\draw (-15, 3) rectangle (-31, -3);
\foreach \x in {-15,-17,...,-31} { \draw (\x, 3) -- (\x, -3); };
\draw [thick] (-15, 0) -- (-31, 0);
\end{scope}
    
\end{tikzpicture}
\end{center}
\caption{A rectangular tie neighbourhood for a train track.} \label{fig:rtie}
\end{figure}

We define a \emph{rectangular tie neighbourhood} for a pre-track with
a single switch as above, except that we allow a single rectangle to
contain multiple parallel branches.  We observe that parallel branches
in a single rectangle make up the boundaries of bigons in the
pre-track, and these may be collapsed by a homotopy supported in the
union of the switch rectangle and the rectangle containing the
branches.  In particular, if all bigon complementary regions are
contained in the rectangular tie neighbourhood, then collapsing all
bigons produces a train track, for which the rectangular tie
neighbourhood of the pre-track is a rectangular tie neighbourhood for
the train track.

We now define a collection of foliated rectangles determined by $a$
and $b$.  All of the rectangular tie neighbourhoods we construct will
be subcollections of these rectangles.  Isotope $a$ and $b$ into
minimal position.  For each intersection point $x$ in $a \cap b$ we
take a rectangular neighbourhood $R_x$, which we shall call a
\emph{vertex rectangle}.  The rectangle has four \emph{corners}, one
in each of the quadrants formed by the local intersection of $a$ and
$b$, and alternating sides parallel to $a$ and $b$. We foliate $R_x$
by arcs parallel to $a \cap R_x$, so that the two sides of the
rectangle parallel to $a$ are leaves of the foliation.  This is
illustrated below in Figure \ref{fig:rx}.

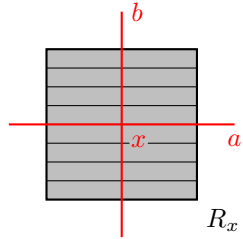
\begin{figure}[H]
%\begin{wrapfigure}{r}{0.4\textwidth}
\begin{center}
%\begin{SCfigure}
%\centering
\caption{A rectangle $R_x$ determined by a point $x \in a \cap
  b$.} \label{fig:rx}
\begin{tikzpicture}[scale=0.5]

\filldraw [thick,color=black,fill=lightgray] (-2,-2) rectangle (2, 2);
\foreach \y in {-2,-1.5,...,2} { \draw (-2, \y) -- (2, \y); };

\draw [thick, red] (-3, 0) -- (3, 0) node [below] {$a$};

\draw [thick, red] (0, -3) -- (0, 3) node [right] {$b$};

\draw (0.45, -0.45) node [red, circle, fill=lightgray, inner sep=0] {$x$};
\draw (2,-2) node [below right] {$R_x$};

\end{tikzpicture}
\end{center}
%\end{SCfigure}
\end{figure}
%\end{wrapfigure}

Now suppose that $\alpha$ is a component of $a \setminus b$, with
endpoints $x$ and $y$.  We define a rectangle $R_\alpha$, called an
\emph{$a$-rectangle}, as follows.  It is a rectangle in $S$,
containing $\alpha \setminus (R_x \cup R_y)$, two of whose sides
consist of the sides of $R_x$ and $R_y$ which are parallel to $b$ and
intersect $\alpha$.  The other two sides consist of properly embedded
arcs parallel to $\alpha \setminus (R_x \cup R_y)$.  We shall foliate
this rectangle with arcs parallel to $a \cap R_\alpha$ such that the
two sides parallel to $\alpha$ are leaves of the foliation.  This is
illustrated in Figure \ref{pic:Ralpha}.

\begin{figure}[H]
\begin{center}
\begin{tikzpicture}[scale=0.4]

\draw [thick,black] (-2,-2) rectangle (2, 2);
\foreach \y in {-2,-1,...,2} { \draw (-2, \y) -- (2, \y); };

\begin{scope}[xshift=16cm]
\draw [thick,black] (-2,-2) rectangle (2, 2);
\foreach \y in {-2,-1,...,2} { \draw (-2, \y) -- (2, \y); };
\end{scope}

\draw [thick, color=black, fill=lightgray] (2,-2) rectangle (14, 2);
\foreach \y in {-2,-1,...,2} { \draw (2, \y) -- (14, \y); };

\draw [thick, red] (-4, 0) -- (20, 0) node [below] {$a$};

\draw [thick, red] (0, -4) -- (0, 4) node [right] {$b$};
\draw [thick, red] (16, -4) -- (16, 4) node [right] {$b$};

\draw (0,0) node [red, below right] {$x$};
\draw (16,0) node [red, below right] {$y$};
\draw (8, 0) node [red, below] {$\alpha$};

\draw (8, -2) node [below] {$R_\alpha$};

\draw (-2, -2) node [below] {$R_x$};
\draw (18, -2) node [below] {$R_y$};

\end{tikzpicture}
\end{center}
\caption{Rectangles determined by a component $\alpha$ of
  $a \setminus b$.} \label{pic:Ralpha}
\end{figure}
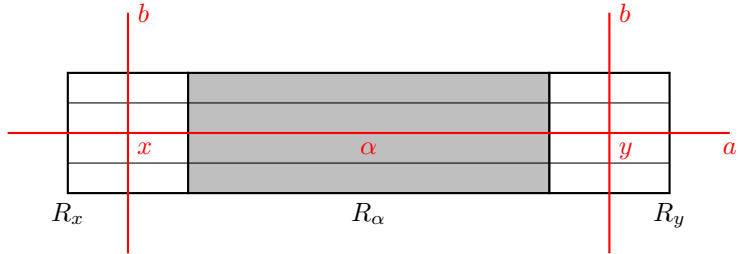

We do the same for components $\beta$ of $b \setminus a$, but this
time foliated by arcs crossing $\beta$ exactly once, as illustrated in
Figure \ref{pic:Rbeta}.  We shall call these rectangles
\emph{$b$-rectangles}.

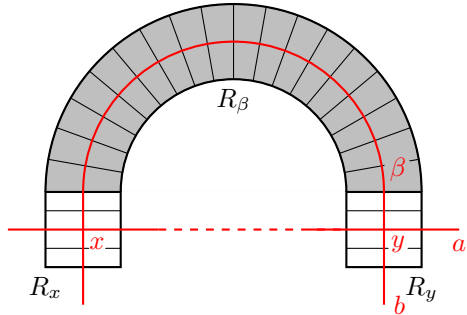
\begin{figure}[H]
\begin{center}
\begin{tikzpicture}[scale=0.25]

\draw [fill=lightgray] ([shift=(0:10cm)]8,2) arc (0:180:10cm);
\draw [fill=white] ([shift=(0:6cm)]8,2) arc (0:180:6cm);

\draw [thick,black] (-2,-2) rectangle (2, 2);
\foreach \y in {-2,-1,...,2} { \draw (-2, \y) -- (2, \y); };

\begin{scope}[xshift=16cm]
\draw [thick,black] (-2,-2) rectangle (2, 2);
\foreach \y in {-2,-1,...,2} { \draw (-2, \y) -- (2, \y); };
\end{scope}

\foreach \x in {0,10,...,180} { \draw (8,2) ++(\x:6cm) -- ++(\x:4cm); };

\draw [thick] ([shift=(0:6cm)]8,2) arc (0:180:6cm);

\draw [thick] ([shift=(0:10cm)]8,2) arc (0:180:10cm);

\draw [thick, red] ([shift=(0:8cm)]8,2) arc (0:180:8cm);

\draw [thick, red] (-4, 0) -- (4, 0);
\draw [thick, red, dashed] (4, 0) -- (14, 0);
\draw [thick, red] (12, 0) -- (20, 0) node [below] {$a$};

\draw [thick, red] (0, -4) -- (0, 2);
\draw [thick, red] (16, -4) node [right] {$b$} -- (16, 2);
\draw (16.75, 3.2) node [red, circle, fill=lightgray, inner sep=0] {$\beta$};

\draw (0.75, -0.75) node [red, circle, fill=white, inner sep=0] {$x$};
\draw (16.75, -0.75) node [red, circle, fill=white, inner sep=0] {$y$};

\draw (-2, -2) node [below] {$R_x$};
\draw (18, -2) node [below] {$R_y$};

\draw (8, 8) node [below] {$R_\beta$};

\end{tikzpicture}
\end{center}
\caption{Rectangles determined by a component $\beta$ of $b
  \setminus a$.} \label{pic:Rbeta}
\end{figure}

Finally, suppose that $f$ is a rectangle component of
$S \setminus (a \cup b)$.  The \emph{face rectangle} $R_f$ is a
foliated rectangle lying inside $f$, whose sides consist of the four
sides of the $a$- and $b$-rectangles which meet $f$.  The foliation
consists of arcs parallel to $a$, such that the two $a$-sides of the
rectangle are leaves of the foliation.  This is illustrated below in
Figure \ref{pic:Rf}.

\begin{figure}[H]
\begin{center}
\begin{tikzpicture}[scale=0.3]

\draw [thick, color=black, fill=lightgray] (2,2) rectangle (14, 8);

\draw [thick,black] (-2,-2) rectangle (2, 2);
\foreach \y in {-2,-1,...,2} { \draw (-2, \y) -- (2, \y); };

\begin{scope}[xshift=16cm]
\draw [thick,black] (-2,-2) rectangle (2, 2);
\foreach \y in {-2,-1,...,2} { \draw (-2, \y) -- (2, \y); };
\end{scope}

\foreach \y in {-2,-1,...,2} { \draw (2, \y) -- (14, \y); };

\draw [thick, red] (-4, 0) -- (20, 0) node [below] {$a$};

\draw (0.75,-1) node [red, circle, fill=white, inner sep=0] {$w$};
\draw (16.75,-1) node [red, circle, fill=white, inner sep=0] {$z$};
\draw (8, -1) node [red, circle, fill=white, inner sep=0] {$\alpha'$};

\draw (8, -2) node [below] {$R_{\alpha'}$};

\draw (-2, 5) node [left] {$R_\beta$};
\draw (18, 5) node [right] {$R_{\beta'}$};

\draw (-2, -2) node [below] {$R_w$};
\draw (18, -2) node [below] {$R_z$};

\draw [thick, red] (0, 4) -- (0, 6);
\draw [thick, red] (16, 4) -- (16, 6);

\draw [thick, black] (-2, 2) -- (-2, 8);
\draw [thick, black] (2, 2) -- (2, 8);

\draw [thick, black] (14, 2) -- (14, 8);
\draw [thick, black] (18, 2) -- (18, 8);

\foreach \y in {3,4,...,7} { \draw (-2, \y) -- (18, \y); };

\draw [thick, black] (2, -2) -- (14, -2);
\draw [thick, black] (2, 12) -- (14, 12);

\draw [thick, red] (0, -4) -- (0, 4) node [right, circle, fill=white,
inner sep=0] {$\beta$};
\draw [thick, red] (16, -4) -- (16, 4) node [right, circle, fill=white,
inner sep=0] {$\beta'$};

\begin{scope}[yshift=+10cm]
\draw [thick,black] (-2,-2) rectangle (2, 2);
\foreach \y in {-2,-1,...,2} { \draw (-2, \y) -- (2, \y); };

\begin{scope}[xshift=16cm]
\draw [thick,black] (-2,-2) rectangle (2, 2);
\foreach \y in {-2,-1,...,2} { \draw (-2, \y) -- (2, \y); };
\end{scope}

\foreach \y in {-2,-1,...,2} { \draw (2, \y) -- (14, \y); };

\draw [thick, red] (-4, 0) -- (20, 0) node [below] {$a$};

\draw [thick, red] (0, -4) -- (0, 4) node [right] {$b$};
\draw [thick, red] (16, -4) -- (16, 4) node [right] {$b$};

\draw (0.75,-1) node [red, circle, fill=white, inner sep=0] {$x$};
\draw (16.75,-1) node [red, circle, fill=white, inner sep=0] {$y$};
\draw (8, -1) node [red, circle, fill=white, inner sep=0] {$\alpha$};
\draw (8, 2) node [above] {$R_\alpha$};

\draw (-2, 2) node [above] {$R_x$};
\draw (18, 2) node [above] {$R_y$};
\end{scope}

\draw (8, 5) node [circle, fill=lightgray, inner sep=0] {$R_f$};

\end{tikzpicture}
\end{center}
\caption{A face rectangle determined by a rectangular face $f$ of
  $S \setminus ( a \cup b )$.} \label{pic:Rf}
\end{figure}
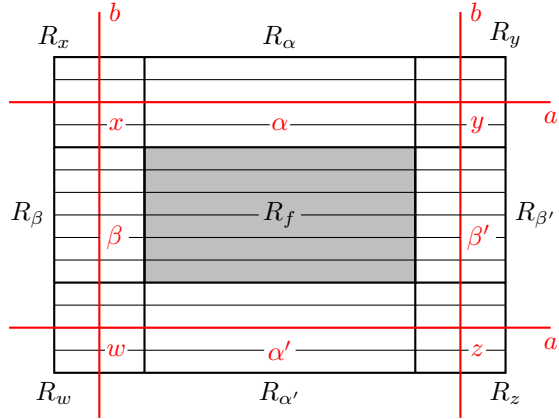

We shall denote the resulting foliation of this subset of $S$ by
$\calF$. This foliates all of $S$ except for the (slightly shrunken)
non-rectangular regions of $S \setminus (a \cup b)$.

We now prove Proposition \ref{prop:collapse}.  We will construct a
foliated region in $S$ which is a union of rectangles, and show that
it is a tie neighbourhood for a train track with a single switch.

\begin{proof}[Proof (of Proposition \ref{prop:collapse}).]
The foliated region $\calF$ is a union of foliated rectangles.  We
will build a rectangular tie neighbourhood $\calF_i$ for the pre-track
$\tau'_i$ which will consist of a subcollection of these rectangles,
and which may contain branch rectangles with multiple edges.  There
will be no complementary regions of $\calF_i$ which are rectangles, so
the pre-track $\tau'_i$ will collapse to a train track $\tau_i$.

The foliated region $\calF_i$ consists of all vertex rectangles and
all $b$-rectangles, together with all the face rectangles which are
contained in rectangular components of $S \setminus (a_i \cup b)$, as
well as all $a$-rectangles adjacent to an included face rectangle.

Let $a^+_i$ be the maximal subarc of $a$ with endpoints in $a \cap b$
contained in the connected component of $\calF_i \cap a$ containing
$a_i$.  In particular $a_i \subseteq a^+_i$.  As the bicorn
$(a_i, b_i)$ is non-degenerate, there is at least one non-rectangle
region of $S \setminus (a \cup b)$ with a boundary edge in
$a \setminus a_i$.  Any non-rectangle region of
$S \setminus (a \cup b)$ is contained in a non-rectangle region of
$S \setminus (a_i \cup b)$, so $a^+_i$ does not consist of all of
$a_i$, and therefore is an interval.

The union of the vertex rectangles and the $a$-rectangles in $\calF_i$
meeting $a^+_i$ is a regular neighbourhood of $a^+_i$, and is a
foliated rectangle containing the unique switch of the pre-track
$\tau_i$.  We shall denote this rectangle by $R^+_i$.  This will be
the switch rectangle for the rectangular tie neighbourhood.

We must now show that all components of $\calF_i \setminus R^+_i$ are
foliated rectangles (perhaps containing multiple branches of $\tau_i$)
and that no components of $S \setminus \calF_i$ is a rectangle.

% We now show that the complement of $\calF_i \setminus R_0$ consists of
% a union of disjoint branch rectangles, possibly with multiple
% branches, and the complement $S \setminus \calF_i$ contains no
% rectangular regions.

If a component of $\calF_i \setminus R^+_i$ has no face rectangles,
then it is a union of $b$-edge rectangles and vertex rectangles.  Let
$\calB$ be the union of the vertex rectangles and the $b$-rectangles;
so $\calB$ is a regular neighbourhood of $b$.  The intersection
$\calB \cap R^+_i$ consists of a subset of the vertex rectangles.
Thus $\calB \setminus R^+_i$ is a union of rectangles with disjoint
closures.  We will refer to the components of $\calB \setminus R^+_i$
as \emph{$b$-strips}.  Each $b$-strip is a rectangle whose boundary
consists of four edges: two parallel to $a$, and contained in
$\partial R^+_i$, and the other two parallel to $b$, and parallel to
properly embedded arcs in the complement of $R^+_i$.  We say two
$b$-strips are \emph{parallel} if their $b$-parallel edges cobound a
rectangle in $S \setminus R^+_i$.

% We say two face rectangles are \emph{horizontally adjacent} if they
% border a common $b$-rectangle.

We say two face rectangles are \emph{vertically adjacent} if they
border a common $a$-rectangle not in $R^+_i$.  We say a maximal
collection of vertically adjacent rectangles, together with the
$a$-rectangles between them, is a \emph{face strip}.  A face strip is
a rectangle whose boundary consists of four edges, two parallel to $a$
and contained in $\partial R^+_i$, and two parallel to $b$, and are
properly embedded parallel arcs in the complement of $R^+_i$.  We say
two face strips are \emph{parallel} if the $b$-parallel edges are
parallel arcs in the complement of $R^+_i$.

We say a face strip is \emph{parallel} to a $b$-strip if their
$b$-parallel edges cobound a rectangle in $S \setminus R^+_i$.  We
observe that a face strip is adjacent to two $b$-strips, one on each
side, and all three strips are parallel to each other.

A component of $\calF_i \setminus R^+_i$ which contains a face
rectangle is a union of face strips and $b$-strips.  Each pair of face
strips is separated by a $b$-strip, and the observation in the
paragraph above ensures that all of the strips are parallel.

We now verify that all of the components of $\calF_i \setminus R^+_i$
are disjoint.  Any two components consisting only of $b$-strips are
disjoint.  Let $R_1$ and $R_2$ be a pair of components of
$\calF_i \setminus R^+_i$ which contain a corner in common.  Each
corner lies in the boundary of a face rectangle, a $b$-rectangle, an
$a$-rectangle and a vertex rectangle.  Suppose the face rectangle lies
in one of the components, say $R_1$.  Then the $b$-rectangle also lies
in $R_1$.  If the vertex and $a$-rectangles do not lie in $R_1$, then
they lie in $R^+_i$, so there can be no rectangle $R_2$ intersecting
$R_1$, a contradiction.  So neither $R_1$ nor $R_2$ contain the face
rectangle.  If the $a$-rectangle lies in $R_1$, then so must the face
rectangle beside it, so the $a$-rectangle is also not contained in
either $R_1$ or $R_2$.  If $R_1$ contains the $b$-rectangle, then it
either contains the vertex rectangle, or the vertex rectangle lies in
$a^+_i$.  In either case, none of the other rectangles can lie in
$R_2$, a contradiction.
\end{proof}

Proposition \ref{prop:nested} follows from work of Penner and Harer
\cite{ph}.

\begin{proof}[Proof (of Proposition \ref{prop:nested}).]
Any closed train route in $\tau_{i+1}$ is a union of arcs of $b$
starting and ending at $a_{i+1}$.  As $a_{i+1} \subset a_i$, it is
also a train route in $\tau_i$.  The train track $\tau_{i+1}$ may then
be obtained from $\tau_i$ by splitting and shifting, by Penner and
Harer \cite{ph}*{Theorem 2.4.1}.
\end{proof}

Before we prove Proposition \ref{prop:discs} we need the following
observation. 

% observe that if $a = \partial D$ and $b = \partial E$ are boundaries
% of essential discs, then both curves produced by disc surgery of $a$
% along an arc of an outermost bigon of $E$ with respect to $D$ are
% boundaries of essential discs.

\begin{proposition} \label{prop:bicorn essential} %
Suppose that $a$ and $b$ are essential simple closed curves in minimal
position.  Then any bicorn contained in $a \cup b$ is an essential
simple closed curve.
\end{proposition}

% \begin{proposition} \label{prop:essential} %
% Suppose that $D$ and $E$ are essential discs in minimal position in a
% compression body $V$.  Suppose that $E'$ is an outermost bigon of $E$
% with respect to $D$.  Then surgery of $D$ along $E'$ produces two new
% discs, both of which are essential.
% \end{proposition}

Proposition \ref{prop:bicorn essential} is standard and we omit the
proof.

% \begin{proof}
% Let $D$ and $E$ be two essential discs in a compression body
% $V$. Choose an outermost bigon $E'$ of $E$ with respect to the arcs
% $D \cap E$.  Recall that we may surger the disc $D$ along $E'$, which
% produces two new discs in $V$ disjoint from $D$, both of which, $D'$
% say, have fewer intersection points with $E$. In fact both of the
% surgery discs are essential: suppose one of the discs $D'$ is
% inessential, i.e. $\partial D'$ bounds a disc $D_S$ in $S$. Any arc of
% $b$ in this disc could be made disjoint by a homotopy, so minimal
% position implies that there are no arcs of $b$ in $D_S$. But this
% implies we can homotope $\beta$ across $a$, reducing intersection
% number, which again contradicts minimal position.
% \end{proof}

\begin{proof}[Proof (of Proposition \ref{prop:discs}).]
Let $a = \partial D$ and $b = \partial E$.  We first show how to
choose an initial non-degenerate bicorn $(a_1, b_1)$.  By an Euler
characteristic argument, there must be at least one complementary
region of $a \cup b$ which is not a rectangle.

Let $\alpha$ be a component of $a \setminus b$ which lies in the
boundary of one of the non-rectangular regions of the complement of
$a \cup b$.  Let $b_1 \subset b$ be a choice of returning arc
determined by an outermost disc $E_1$ in $E$, and let $a_1$ be the
subinterval of $a$:
\begin{itemize}
\item with the same endpoints as $b_1$,
\item which is disjoint from $\alpha$.
\end{itemize}
The bicorn $c_1$ determined by $(a_1, b_1)$ bounds a
disc corresponding to disc surgery of $D$ along $E_1$, and we shall
denote this disc by $F_1$.  This disc $F_1$ is essential by
Proposition \ref{prop:bicorn essential}.

Now suppose we have constructed the $i$-th nested bicorn disc
$F_i = (D_i, E_i)$ with bicorn boundary $c_i = (a_i, b_i)$.  The
intersection $E \cap F_i$ is equal to $E_i \cup (E \cap D_i)$.  Choose
$E_{i+1}$ to be an outermost disc of $E$ with respect to $E \cap D_i$,
which is not $E_i$.  Then $\gamma_{i+1} = E_{i+1} \cap D_i$ bounds a
disc in $D_i$ which we shall choose to be $D_{i+1}$.  We shall set
$F_{i+1} = D_{i+1} \cup E_{i+1}$, which has bicorn boundary
$c_{i+1} = (a_{i+1}, b_{i+1})$, where $a_{i+1} \subset a_i$ to be
$D_{i+1} \cap a$ and $b_{i+1}$ to be $E_{i+1} \cap b$.  The disc
$F_{i+1}$ is essential by Proposition \ref{prop:bicorn essential}, and
furthermore, is obtained from disc surgery of $F_i$ along $E_{i+1}$.
We have therefore constructed the next disc in the nested bicorn disc
sequence, as required.
%
% Let $c_i$ be a bicorn determined by a pair of arcs $(a_i, b_i)$,
% which bounds a disc $D_i$.  The curves $b = \partial E$ and the
% bicorn $c_i = \partial D_i$ need not be in minimal position.
% Nevertheless there is at least one outermost bigon of $E$ with
% respect to $D_i$, forming a returning subarc of $b$ for $c_i$, and
% we shall choose this subarc to be $b_{i+1}$.  The subarc $b_{i+1}$
% need not be disjoint from $a \setminus a_i$.  Choose $a_{i+1}$ to be
% the subarc of $a_i$ with the same endpoints as $b_{i+1}$.  The new
% bicorn bounds a disc $c_{i+1}$, as required, as both of the new
% surgery curves obtained from a disc surgery bound discs, by
% Proposition \ref{prop:essential}.
\end{proof}

%%%%%%%%%%%%%%%%%%%%%%%%%%%%%%%%%%%%%%%%%%%%%%%%%%%%%%%%%%%%%%%%%%%%%%%%%%%%%%

\begin{bibdiv}
\begin{biblist}

\bib{ack}{article}{
   author={Ackermann, Robert},
   title={An alternative approach to extending pseudo-Anosovs over
   compression bodies},
   journal={Algebr. Geom. Topol.},
   volume={15},
   date={2015},
   number={4},
   pages={2383--2391},
   issn={1472-2747},
%   review={\MR{3402343}},
%   doi={10.2140/agt.2015.15.2383},
}

 \bib{agol-freedman}{article}{
   author={Agol, Ian},
   author={Freedman, Michael},
   title={Embedding heegaard decompositions},
   date={2019},
   eprint={arXiv:1906.03244},
}

\bib{bjm}{article}{
   author={Biringer, Ian},
   author={Johnson, Jesse},
   author={Minsky, Yair},
   title={Extending pseudo-Anosov maps into compression bodies},
   journal={J. Topol.},
   volume={6},
   date={2013},
   number={4},
   pages={1019--1042},
   issn={1753-8416},
%   review={\MR{3145149}},
%   doi={10.1112/jtopol/jtt021},
 }

\bib{bv}{article}{
   author={Biringer, Ian},
   author={Vlamis, Nicholas G.},
   title={Automorphisms of the compression body graph},
   journal={J. Lond. Math. Soc. (2)},
   volume={95},
   date={2017},
   number={1},
   pages={94--114},
   issn={0024-6107},
%   review={\MR{3653085}},
%   doi={10.1112/jlms.12011},
 }

 \bib{bowditch-rel-hyp}{article}{
   author={Bowditch, Brian H.},
   title={Relatively hyperbolic groups},
   journal={Internat. J. Algebra Comput.},
   volume={22},
   date={2012},
   number={3},
   pages={1250016, 66},
   issn={0218-1967},
%   review={\MR{2922380}},
%   doi={10.1142/S0218196712500166},
 }

 \bib{burton-purcell}{article}{
   author={Burton, Stephan D.},
   author={Purcell, Jessica S.},
   title={Geodesic systems of tunnels in hyperbolic 3-manifolds},
   journal={Algebr. Geom. Topol.},
   volume={14},
   date={2014},
   number={2},
   pages={925--952},
   issn={1472-2747},
%   review={\MR{3160607}},
%   doi={10.2140/agt.2014.14.925},
 }

\bib{casson-long}{article}{
   author={Casson, A. J.},
   author={Long, D. D.},
   title={Algorithmic compression of surface automorphisms},
   journal={Invent. Math.},
   volume={81},
   date={1985},
   number={2},
   pages={295--303},
   issn={0020-9910},
%   review={\MR{799268}},
%   doi={10.1007/BF01389054},
}

\bib{cdp}{book}{
   author={Coornaert, M.},
   author={Delzant, T.},
   author={Papadopoulos, A.},
   title={G\'eom\'etrie et th\'eorie des groupes},
   language={French},
   series={Lecture Notes in Mathematics},
   volume={1441},
%   note={Les groupes hyperboliques de Gromov. [Gromov hyperbolic groups];
%   With an English summary},
   publisher={Springer-Verlag, Berlin},
   date={1990},
   pages={x+165},
%   isbn={3-540-52977-2},
%   review={\MR{1075994}},
 }

 \bib{dang-purcell}{article}{
	author={Dang, Vinh},
	author={Purcell, Jessica},
	title={Cusp shape and tunnel number},
	eprint={arXiv:1711.03693},
	date={2017},
}

\bib{dow-tay}{article}{
   author={Dowdall, Spencer},
   author={Taylor, Samuel J.},
   title={The co-surface graph and the geometry of hyperbolic free group
   extensions},
   journal={J. Topol.},
   volume={10},
   date={2017},
   number={2},
   pages={447--482},
   issn={1753-8416},
%   review={\MR{3653318}},
%   doi={10.1112/topo.12013},
}

\bib{dt}{article}{
   author={Dunfield, Nathan M.},
   author={Thurston, William P.},
   title={Finite covers of random 3-manifolds},
   journal={Invent. Math.},
   volume={166},
   date={2006},
   number={3},
   pages={457--521},
   issn={0020-9910},
 }

 \bib{gadre}{article}{
   author={Lustig, Martin},
   author={Moriah, Yoav},
   title={Are large distance Heegaard splittings generic?},
   note={With an appendix by Vaibhav Gadre},
   journal={J. Reine Angew. Math.},
   volume={670},
   date={2012},
   pages={93--119},
   issn={0075-4102},
%   review={\MR{2982693}},
}

\bib{ham}{article}{
   author={Hamenst{\"a}dt, Ursula},
   title={Train tracks and the Gromov boundary of the complex of curves},
   conference={
      title={Spaces of Kleinian groups},
   },
   book={
      series={London Math. Soc. Lecture Note Ser.},
      volume={329},
      publisher={Cambridge Univ. Press},
      place={Cambridge},
   },
   date={2006},
   pages={187--207},
 }

\bib{hempel}{article}{
   author={Hempel, John},
   title={3-manifolds as viewed from the curve complex},
   journal={Topology},
   volume={40},
   date={2001},
   number={3},
   pages={631--657},
   issn={0040-9383},
 }

 \bib{km}{article}{
   author={Kaimanovich, Vadim A.},
   author={Masur, Howard A.},
   title={The Poisson boundary of the mapping class group},
   journal={Invent. Math.},
   volume={125},
   date={1996},
   number={2},
   pages={221--264},
   issn={0020-9910},
%   review={\MR{1395719 (97m:32033)}},
 }

 \bib{kapovich-rafi}{article}{
   author={Kapovich, Ilya},
   author={Rafi, Kasra},
   title={On hyperbolicity of free splitting and free factor complexes},
   journal={Groups Geom. Dyn.},
   volume={8},
   date={2014},
   number={2},
   pages={391--414},
   issn={1661-7207},
%   review={\MR{3231221}},
%   doi={10.4171/GGD/231},
 }

\bib{klarreich}{article}{
        author={Klarreich, E.},
        title={The boundary at infinity of the curve complex and the relative
Teichm\"uller space},
}

\bib{kobayashi}{article}{
   author={Kobayashi, Tsuyoshi},
   title={Heights of simple loops and pseudo-Anosov homeomorphisms},
   conference={
      title={Braids},
      address={Santa Cruz, CA},
      date={1986},
   },
   book={
      series={Contemp. Math.},
      volume={78},
      publisher={Amer. Math. Soc.},
      place={Providence, RI},
   },
   date={1988},
   pages={327--338},
%   review={\MR{975087 (89m:57015)}},
%   doi={10.1090/conm/078/975087},
}

\bib{lmw}{article}{
   author={Lubotzky, Alexander},
   author={Maher, Joseph},
   author={Wu, Conan},
   title={Random Methods in 3-Manifold Theory},
%   language={Russian},
   journal={Tr. Mat. Inst. Steklova},
   volume={292},
   date={2016},
   number={Algebra, Geometriya i Teoriya Chisel},
   pages={124--148},
   issn={0371-9685},
   isbn={5-7846-0137-7},
   isbn={978-5-7846-0137-7},
%   review={\MR{3628457}},
%   doi={10.1134/S0371968516010088},
}

\bib{ma-wang}{article}{
	author={Ma, Jiming},
	author={Wang, Jiajun},
	title={Quasi-homomorphisms on mapping class groups vanishing on a handlebody group},
	date={2017},
}

\bib{maher_heegaard}{article}{
   author={Maher, Joseph},
   title={Random Heegaard splittings},
   journal={J. Topol.},
   volume={3},
   date={2010},
   number={4},
   pages={997--1025},
   issn={1753-8416},
%   review={\MR{2746344 (2012d:37087)}},
%   doi={10.1112/jtopol/jtq031},
}

\bib{maher-tiozzo}{article}{
   author={Maher, Joseph},
   author={Tiozzo, Giulio},
   title={Random walks on weakly hyperbolic groups},
   journal={Journal f\"ur die reine und angewandte Mathematik (Crelle's Journal), to appear},
   date={2017},
}

\bib{masur}{article}{
   author={Masur, Howard},
   title={Measured foliations and handlebodies},
   journal={Ergodic Theory Dynam. Systems},
   volume={6},
   date={1986},
   number={1},
   pages={99--116},
   issn={0143-3857},
 %  review={\MR{837978}},
 %  doi={10.1017/S014338570000331X},
}

\bib{mm1}{article}{
   author={Masur, Howard A.},
   author={Minsky, Yair N.},
   title={Geometry of the complex of curves. I. Hyperbolicity},
   journal={Invent. Math.},
   volume={138},
   date={1999},
   number={1},
   pages={103--149},
   issn={0020-9910},
%   review={\MR{1714338 (2000i:57027)}},
}

\bib{mm2}{article}{
   author={Masur, H. A.},
   author={Minsky, Y. N.},
   title={Geometry of the complex of curves. II. Hierarchical structure},
   journal={Geom. Funct. Anal.},
   volume={10},
   date={2000},
   number={4},
   pages={902--974},
   issn={1016-443X},
%   review={\MR{1791145 (2001k:57020)}},
}

\bib{mm3}{article}{
   author={Masur, Howard A.},
   author={Minsky, Yair N.},
   title={Quasiconvexity in the curve complex},
   conference={
      title={In the tradition of Ahlfors and Bers, III},
   },
   book={
      series={Contemp. Math.},
      volume={355},
      publisher={Amer. Math. Soc.},
      place={Providence, RI},
   },
   date={2004},
   pages={309--320},
}

\bib{mms}{article}{
   author={Masur, Howard},
   author={Mosher, Lee},
   author={Schleimer, Saul},
   title={On train-track splitting sequences},
   journal={Duke Math. J.},
   volume={161},
   date={2012},
   number={9},
   pages={1613--1656},
   issn={0012-7094},
%   review={\MR{2942790}},
%   doi={10.1215/00127094-1593344},
}

\bib{masur-schleimer}{article}{
   author={Masur, Howard},
   author={Schleimer, Saul},
   title={The geometry of the disk complex},
   journal={J. Amer. Math. Soc.},
   volume={26},
   date={2013},
   number={1},
   pages={1--62},
   issn={0894-0347},
%   review={\MR{2983005}},
%   doi={10.1090/S0894-0347-2012-00742-5},
}

\bib{morgan-tian}{book}{
   author={Morgan, John},
   author={Tian, Gang},
   title={Ricci flow and the Poincar\'e conjecture},
   series={Clay Mathematics Monographs},
   volume={3},
   publisher={American Mathematical Society},
   place={Providence, RI},
   date={2007},
   pages={xlii+521},
   isbn={978-0-8218-4328-4},
%   review={\MR{2334563 (2008d:57020)}},
}

\bib{ph}{book}{
   author={Penner, R. C.},
   author={Harer, J. L.},
   title={Combinatorics of train tracks},
   series={Annals of Mathematics Studies},
   volume={125},
   publisher={Princeton University Press},
   place={Princeton, NJ},
   date={1992},
   pages={xii+216},
   isbn={0-691-08764-4},
   isbn={0-691-02531-2},
%   review={\MR{1144770 (94b:57018)}},
}

\bib{rivin}{article}{
   author={Rivin, Igor},
   title={Walks on groups, counting reducible matrices, polynomials, and
   surface and free group automorphisms},
   journal={Duke Math. J.},
   volume={142},
   date={2008},
   number={2},
   pages={353--379},
   issn={0012-7094},
}

\bib{st}{article}{
   author={Scharlemann, Martin},
   author={Tomova, Maggy},
   title={Alternate Heegaard genus bounds distance},
   journal={Geom. Topol.},
   volume={10},
   date={2006},
   pages={593--617},
   issn={1465-3060},
}

\bib{thurston}{article}{
   author={Thurston, William P.},
   title={On the geometry and dynamics of diffeomorphisms of surfaces},
   journal={Bull. Amer. Math. Soc. (N.S.)},
   volume={19},
   date={1988},
   number={2},
   pages={417--431},
   issn={0273-0979},
%   review={\MR{956596}},
%   doi={10.1090/S0273-0979-1988-15685-6},
}

\end{biblist}
\end{bibdiv}

%%%%%%%%%%%%%%%%%%%%%%%%%%%%%%%%%%%%%%%%%%%%%%%%%%%%%%%%%%%%%%%%%%%%%%%%%%%%%%

%%%%%%%%%%%%%%%%%%%%%%%%%%%%%%%%%%%%%%%%%%%%%%%%%%%%%%%%%%%%%%%%%%%%%%%%%%%%%%
\vskip 20pt

\noindent Joseph Maher \\
CUNY College of Staten Island and CUNY Graduate Center \\
\url{joseph.maher@csi.cuny.edu} \\

\noindent Saul Schleimer \\
University of Warwick \\
\url{s.schleimer@warwick.ac.uk }

%%%%%%%%%%%%%%%%%%%%%%%%%%%%%%%%%%%%%%%%%%%%%%%%%%%%%%%%%%%%%%%%%%%%%%%%%%%%%%

\end{document}